 \newtheorem{theorem}{Theorem}[section] 
\theoremstyle{definition}
\newtheorem{definition}[theorem]{Definition} 
\newtheorem{proposition}[theorem]{Proposition}
\newtheorem{corollary}[theorem]{Corollary}
\newtheorem{lemma}[theorem]{Lemma}
\newtheorem{example}[theorem]{Example} 
\newtheorem{question}[theorem]{Question}
\newtheorem{notation}[theorem]{Notation}
\numberwithin{equation}{subsection}
\theoremstyle{remark}
\newtheorem{remark}[theorem]{Remark} 
\numberwithin{equation}{section}
\DeclareMathOperator{\id}{id}
\DeclareMathOperator{\im}{im}
\DeclareMathOperator{\Spec}{Spec}
\DeclareMathOperator{\rk}{rk}
\DeclareMathOperator{\Hom}{Hom}
\DeclareMathOperator{\uAut}{\underline{Aut}}
\DeclareMathOperator{\stab}{Stab}
\DeclareMathOperator{\ind}{Ind}
\DeclareMathOperator{\Char}{char}
\DeclareMathOperator{\Aut}{Aut}
\DeclareMathOperator{\gal}{Gal} 
\DeclareMathOperator{\res}{Res} 
\DeclareMathOperator{\diag}{diag} 
\DeclareMathOperator{\ef}{\text{ef}}
\newcommand{\Ss}{\mathrm{ss}}
\DeclareMathOperator{\bun}{\mathrm{Bun}}
\DeclareMathOperator{\Bun}{\mathrm{Bun}}
\DeclareMathOperator{\Ad}{Ad}
\DeclareMathOperator{\Lie}{Lie} 
\newcommand{\fkg}{\mathfrak{g}}
\newcommand{\cale}{\mathcal{E}}
\newcommand{\calf}{\mathcal{F}}
\newcommand{\calg}{\mathcal{G}}
\newcommand{\calh}{\mathcal{H}}
\newcommand{\cali}{\mathcal{I}}
\newcommand{\call}{\mathcal{L}}
\newcommand{\calo}{\mathcal{O}}
\newcommand{\calp}{\mathcal{P}}
\newcommand{\calt}{\mathcal{T}}
\newcommand{\calx}{\mathcal{X}}
\newcommand{\caly}{\mathcal{Y}}
\newcommand{\cG}{\mathcal{G}}
\newcommand{\cX}{\mathcal{X}}
\newcommand{\cY}{\mathcal{Y}}
\newcommand{\mc}{\mathbb{C}}
\newcommand{\mg}{\mathbb{G}}
\newcommand{\rR}{\mathrm{R}}
\newcommand{\rB}{\mathrm{B}} 
\newcommand{\rH}{\mathrm{H}} 
\newcommand{\GL}{\mathrm{GL}} 
\newcommand{\GO}{\mathrm{GO}} 
\newcommand{\rO}{\mathrm{O}} 
\newcommand{\rZ}{\mathrm{Z}} 
\newcommand{\bOne}{\mathbbm{1}}
\newcommand{\Go}{G^\circ} 
\newcommand{\et}{\mathrm{\acute{e}t}} 
\newcommand{\pr}{\mathrm{pr}}
\newcommand{\tuomas}[1]{{\color{orange} (Tuomas: #1)}}
\newcommand{\stefan}[1]{{\color{blue} (Stefan: #1)}}
\newcommand{\ludvig}[1]{{\color{green} (Ludvig: #1)}}
\title[Moduli of $\calg$-bundles and nondensity of EF bundles]{Moduli of $\calg$-bundles under nonconnected group schemes and nondensity of essentially finite bundles}
\author[L. Olsson, S. Reppen, T. Tajakka]{Ludvig Olsson, Stefan Reppen, Tuomas Tajakka}
\address[S. Reppen]{\parbox{\linewidth}{Department of Mathematics, Stockholm University \\
Graduate School of Mathematical Sciences, the University of Tokyo}}
\email[]{stefan.reppen@gmail.com}
\date{}
\begin{document}
\maketitle
\begin{abstract}
We prove the existence of a projective good moduli space of principal $\calg$-bundles under nonconnected reductive group schemes $\calg$ over a smooth projective curve $C$. We also prove that the moduli stack of $\calg$-bundles decomposes into finitely many substacks $\bun_{\calp}$ each admitting a torsor $\bun_{\calg_\calp}\to \bun_\calp$ under a finite group, for some connected reductive group schemes $\calg_\calp$ over $C$. We use this for the second purpose of the article: for any constant connected reductive group $G$, the subset of essentially finite $G$-bundles in the moduli of degree 0 semistable $G$-bundles over $C$ is not dense, unless $G$ is a torus or the genus of $C$ is smaller than 2. We do this by giving an upper bound on the dimension of the closure of the subset of essentially finite $G$-bundles. 
\end{abstract}
%
%
\section{Introduction}

Let $C$ be a smooth projective curve over an algebraically closed field $k$ and let $\calg$ be a reductive group scheme over $C$. In most studies on $\calg$-bundles over $C$ the group is usually assumed to be connected. There are both group-theoretic and geometric complications that occur in the nonconnected case. For example, Zariski-local triviality (see \cite{drinfeld.simpson}) of $\calg$–bundles does not hold for nonconnected groups. 
Nonetheless, nonconnected groups occur naturally both in their own right, such as the orthogonal group corresponding to a vector bundle with a quadratic form, and in the study of connected groups, for instance as normalizers of tori. Thus, one of the purposes of this text is to study the moduli stack $\bun_\calg$ of principal bundles under nonconnected group schemes over $C$. In particular, we show that $\bun_\calg$ decomposes into finitely many substacks $\bun_\calp$, each of which admits a finite torsor $\bun_{\calg_\calp}\to \bun_\calp$, where $\calg_\calp$ is a connected reductive group over $C$. When $\Char(k)=0$, we use this to prove that the substack of semistable $\calg$-bundles admits a projective good moduli space (see \Cref{intro.moduli} for some background on this topic). 

We also use this decomposition for the second purpose of the article, which is to settle a density question (as discussed in \cite{ducrohet.mehta},\cite{ghiasabadi.reppen},\cite{weissmann}) regarding essentially finite bundles under connected reductive groups over $k$, in the characteristic 0 case (see \Cref{density.question.main.theorem.intro}). 
%
%
\subsection{Moduli of \texorpdfstring{$\calg$}{G}-bundles}\label{intro.moduli}
The construction of moduli of $G$-bundles over $C$ has its roots in the work of Ramanathan who
constructed a projective good moduli space of semistable $G$-bundles for any connected reductive group $G$ over $\mc$ (see \cite{raman1,raman2}). After decades of work addressing the existence of such moduli spaces (see e.g. \cite{faltings}, \cite{balaji2}, \cite{balaji.semi.princ}) the theory culminated 
in the seminal work \cite{gomez.langer.schmitt.sols} by G\'omez-Langer-Sols-Schmitt, where moduli of $G$-bundles were constructed in arbitrary characteristic, for arbitrary reductive groups over $k$, over an arbitrary smooth projective base scheme.\footnote{We remark that to obtain projectivity in some of the previous works, some version of the semistable reduction theorem was needed, and this was proved by Heinloth in \cite{heinloth.semistable.reduction} for large classes of reductive groups.}

In recent years, there has been a growing interest in constructions of moduli of $\calg$-bundles for more general groups than reductive group schemes over $k$, in particular for so-called Bruhat-Tits group schemes over $C$. These are smooth affine group schemes $\calg$ over $C$ with geometrically connected fibers such that there is an open subset $U\subset C$ such that $\calg|_U$ is connected reductive and for all $x\in C\setminus U$, the group scheme $\calg|_{\Spec \calo_{C,x}}$ is a connected parahoric Bruhat-Tits group. 
In particular, $\calg$ could be a connected reductive group scheme over $C$.
When $\Char(k)=0$, Balaji and Seshadri proved in \cite{balaji.seshadri.bruhat.tits} the existence of a projective good moduli space of $\calg$-bundles in the case where $\calg|_U$ is semisimple and simply connected, and for general Bruhat-Tits group schemes Alper--Halpern-Leistner--Heinloth proved in \cite{AHLH} the existence of a proper 
good moduli space for arbitrary Bruhat-Tits group schemes.

In this paper we study moduli of principal $\calg$-bundles in the case where the group scheme $\calg$ is reductive over all of $C$, but not necessarily connected. 
We declare a $\calg$-bundle $\cale$ to be semistable if the adjoint bundle $\Ad_*\cale$ is semistable 
(see \Cref{section:semistability}), and we prove the following.
\begin{theorem}\label{intro.main.theorem.gms}
Suppose that $\Char(k)=0$, and 
let $\calg$ be a (not necessarily connected) reductive group scheme over $C$. The stack $\bun_{\calg}^{\Ss}$ of semistable $\calg$-bundles admits a projective good moduli space.
\end{theorem}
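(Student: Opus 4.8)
The plan is to bootstrap from the connected case via the decomposition of $\bun_\calg$, and then to upgrade properness to projectivity by means of a determinant‑of‑cohomology line bundle. Recall that $\bun_\calg=\bigsqcup_\calp\bun_\calp$ is a finite disjoint union of open and closed substacks, and that for each $\calp$ there is a connected reductive group scheme $\calg_\calp$ over $C$ together with a torsor $\bun_{\calg_\calp}\to\bun_\calp$ under a finite group $\Gamma_\calp$, which is a constant group since $\Char(k)=0$. The adjoint bundle of a $\calg_\calp$-bundle agrees with that of the associated $\calg$-bundle, so, semistability being a condition on the adjoint bundle (\Cref{section:semistability}), this torsor restricts to a $\Gamma_\calp$-torsor $\bun_{\calg_\calp}^{\Ss}\to\bun_\calp^{\Ss}$. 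Hence it suffices to show that each $\bun_{\calg_\calp}^{\Ss}$ admits a projective good moduli space $M_\calp'$: then $\Gamma_\calp$ acts on $M_\calp'$ by uniqueness of good moduli spaces, $M_\calp:=M_\calp'/\Gamma_\calp$ is again projective over $k$ (a finite‑group quotient of a projective $k$-scheme is projective), the induced morphism $\bun_\calp^{\Ss}=[\bun_{\calg_\calp}^{\Ss}/\Gamma_\calp]\to M_\calp$ is a good moduli space since good moduli space morphisms are compatible with finite‑group quotients, and $\bigsqcup_\calp M_\calp$ is the space we want.

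So fix a connected reductive group scheme $\calh$ over $C$; we must produce a projective good moduli space for $\bun_\calh^{\Ss}$. Such an $\calh$ is in particular a Bruhat–Tits group scheme over $C$ (take $U=C$ in the definition), so by \cite{AHLH} the stack $\bun_\calh^{\Ss}$ has a good moduli space $M$ that is proper over $k$---provided that our notion of semistability ($\Ad_*\cale$ semistable as a vector bundle) coincides with the intrinsic semistability used there. This is the group‑scheme analogue of Ramanathan's characterization of semistability (cf. \Cref{section:semistability}), and I would prove it as in the classical case by reducing to one‑parameter degenerations and comparing degrees of associated bundles, or cite it from the literature on Bruhat–Tits bundles.

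It remains to exhibit an ample line bundle on $M$, and this is the heart of the matter. Choose a faithful representation $\rho\colon\calh\hookrightarrow\GL(\mathcal V)$ with $\mathcal V$ a vector bundle on $C$ (possible since $\Char(k)=0$), and to a family $\cale$ of $\calh$-bundles on $C\times T$ associate the line bundle $\lambda_\rho(\cale)=\left(\det\rR p_{T*}(\cale\times^{\calh}\mathcal V)\right)^{\otimes(-N)}$ on $T$; after twisting by a fixed power of $\det\mathcal V$ and choosing $N$ suitably, the resulting line bundle $\mathcal L$ on $\bun_\calh^{\Ss}$ has trivial weight along the automorphism groups of polystable $\calh$-bundles, hence descends, by the usual descent criterion, to a line bundle $L$ on $M$. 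The crux is that $L$ is ample. I would prove this following Faltings and G\'omez–Langer–Sols–Schmitt: show that $L$ is semiample and that the induced morphism separates $S$-equivalence classes of semistable $\calh$-bundles, and then deduce ampleness using properness of $M$. Since \cite{faltings} and \cite{gomez.langer.schmitt.sols} work with \emph{constant} reductive groups, the main obstacle is exactly this step: one must transport their Gieseker‑type parameter space and the associated GIT (semi)stability analysis to the group scheme $\calh$ over $C$---which need not become constant even after an étale base change on $C$---or else descend ampleness along a finite cover $\Ctilde\to C$ trivializing enough of $\calh$. Everything before this step is essentially formal, once the decomposition and the semistability comparison are in hand.
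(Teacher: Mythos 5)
Your overall architecture matches the paper's: decompose $\bun_\calg$ into the $\bun_\calp$ (\Cref{bung.decomp}), check that the finite torsor $\bun_{\calg_\calp}\to\bun_\calp$ restricts to the semistable loci, treat the connected case, and descend. Your descent step is a legitimate variant: you quotient the good moduli space directly by the finite group, whereas the paper's \Cref{gamma.descent.gms} re-runs the existence criteria of \cite{AHLH} ($\Theta$-reductivity and S-completeness descend along finite \'etale surjections) and then descends a power of an ample line bundle. In characteristic $0$ your version can be made to work, though you should still justify that $[\bun^{\Ss}_{\calg_\calp}/\Gamma_\calp]\to M'_\calp/\Gamma_\calp$ is a good moduli space morphism rather than asserting compatibility with finite-group quotients.

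The genuine gap is projectivity in the connected case. You correctly note that \cite{AHLH} only gives properness and that \cite{faltings} and \cite{gomez.langer.schmitt.sols} treat constant groups, but your proposed fix --- transporting the determinant-line-bundle/GIT ampleness analysis to a non-constant group scheme $\calh$ over $C$ --- is precisely the hard step, and you leave it as an acknowledged obstacle rather than resolving it. The paper avoids GIT entirely: by Behrend's theorem (\Cref{passing.to.cover.behrend}), for a finite cover $f\colon D\to C$ the pullback $f^*\colon\bun_{\calh,C}\to\bun_{f^*\calh,D}$ is an \emph{affine} morphism of stacks; since semistability is preserved and reflected by $f^*$ (\Cref{ss.iff.ss.on.finite.cover}), the induced map of proper good moduli spaces $M^{\Ss}_{\calh,C}\to M^{\Ss}_{f^*\calh,D}$ is affine and proper, hence finite (\Cref{pullback.is.affine.on.moduli.space}). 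Choosing $f$ so that $f^*\calh$ is an inner form of a constant group $G$ identifies $M^{\Ss}_{f^*\calh,D}$ with $M^{\Ss}_{G,D}$, which is projective by Ramanathan, and a scheme finite over a projective scheme is projective (\Cref{ahlh.projective}). Your closing suggestion to ``descend ampleness along a finite cover'' points in this direction, but without the affineness input from Behrend you have no control over the morphism of moduli spaces (a priori it need not be quasi-finite), so that route does not close the gap as stated. The comparison of your adjoint-bundle semistability with the notion used in \cite{AHLH}, which you defer, is also genuinely needed and is supplied in the paper by \Cref{semistability.via.adjoint.rep}.
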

We prove this result by reducing to the case of connected groups via the following key statement.
\begin{theorem}\label{decomposition.theorem.intro}
Suppose that $\calg$ is a (not necessarily connected) reductive group over $C$. If $\Char(k)>0$ assume furthermore that that $\calg/\calg^0$ is finite and that $\Char(k)$ does not divide $|\calg/\calg^\circ|$. Then there exists a finite decomposition $\bun_{\calg} \cong \coprod_{\calp}\bun_{\calp}$ by open and closed substacks such that each $\bun_{\calp}$ admits a torsor $\bun_{\calg_\calp} \to \bun_{\calp}$ under a finite group, where $\calg_{\calp}$ is a connected reductive group scheme over $C$.
\end{theorem}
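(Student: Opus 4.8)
\emph{Overall strategy.} The plan is to push a $\calg$-bundle to the torsor it induces under the component group $\Gamma:=\calg/\calg^0$, which under the hypotheses is a finite \'etale group scheme over $C$. This gives a morphism $\rho\colon\bun_\calg\to\bun_\Gamma$, $\cE\mapsto\cE/\calg^0$. I would first observe that $\bun_\Gamma$ is a quasi-compact Deligne--Mumford stack that is $0$-dimensional over $k$: since $\Gamma$ is \'etale there are no infinitesimal deformations or automorphisms of $\Gamma$-torsors, $\Gamma$-torsors on a fixed curve are bounded (they are finite \'etale covers of degree $\deg\Gamma$), and automorphisms of such a torsor form a finite group whose order divides $|\Gamma(\bar k)|$, in particular prime to $\Char(k)$ in positive characteristic. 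Over $k=\bar k$ each connected component of $\bun_\Gamma$ is therefore a finite gerbe, i.e. of the form $BH_\calp$ for a finite group $H_\calp$, and there are only finitely many; writing $\bun_\Gamma=\coprod_\calp BH_\calp$ and setting $\bun_\calp:=\rho^{-1}(BH_\calp)$ (discarding any empty ones) produces the claimed decomposition of $\bun_\calg$ into finitely many open and closed substacks. Next, fixing $\calp$ and a $\Gamma$-torsor $P_\calp$ on $C$ representing that component, so that $BH_\calp=[\Spec k/H_\calp]$ with $H_\calp=\uAut_C(P_\calp)$, pulling back along $\rho$ yields an $H_\calp$-torsor $\widetilde\bun_\calp:=\bun_\calp\times_{BH_\calp}\Spec k\to\bun_\calp$, and by construction $\widetilde\bun_\calp$ is the moduli stack of pairs $(\cE,\varphi)$ with $\cE$ a $\calg$-bundle and $\varphi\colon\cE/\calg^0\xrightarrow{\sim}P_\calp$.

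\emph{Identifying the connected group scheme.} It remains to recognise $\widetilde\bun_\calp$ as the stack of principal bundles under a connected reductive group scheme $\calg_\calp$ over $C$; this will be the ``$P_\calp$-twisted form of $\calg^0$''. The morphism $P_\calp\to C$ is finite \'etale and canonically trivializes $P_\calp$ itself, so for $(\cE,\varphi)$ the pullback $\cE\times_C P_\calp$ inherits a canonical trivialization of its $\Gamma$-part and hence a canonical reduction to a $(\calg^0)_{P_\calp}$-torsor $\cF$ on $P_\calp$; conversely $(\cE,\varphi)$ is recovered from $\cF$ together with descent data along $P_\calp\to C$ in which the gluing over $P_\calp\times_C P_\calp\cong\Gamma\times_C P_\calp$ is performed by conjugation in $\calg$. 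Bundling this conjugation into the structure group, one defines $\calg_\calp$ as the descent of $(\calg^0)_{P_\calp}$ along $P_\calp\to C$; then $\widetilde\bun_\calp\simeq\bun_{\calg_\calp}$, and since $\calg_\calp$ is a $P_\calp/C$-form of $\calg^0$ and being connected reductive is fppf-local on the base, $\calg_\calp$ is connected reductive. With $\calg_\calp$ in hand, the $H_\calp$-torsor of the previous paragraph is exactly the desired torsor $\bun_{\calg_\calp}\to\bun_\calp$ under a finite group, so that $\bun_\calp\cong[\bun_{\calg_\calp}/H_\calp]$.

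\emph{Main obstacle and the role of the hypotheses.} The delicate point is the construction of $\calg_\calp$ as a genuine group scheme over $C$: the conjugation action of $\calg$ on $\calg^0$ does \emph{not} factor through $\Gamma$ --- it only induces a homomorphism $\Gamma\to\underline{\mathrm{Out}}(\calg^0)$ --- so ``twisting $\calg^0$ by the $\Gamma$-torsor $P_\calp$'' is not literally twisting by a $\Gamma$-action, and the descent datum above is a priori well defined only up to inner automorphisms of $\calg^0$. The work is to show this ambiguity can be rigidified or absorbed, so that $\calg_\calp$ is well defined and the equivalence $\bun_{\calg_\calp}\simeq\widetilde\bun_\calp$ holds on the nose; the case where $P_\calp$ is disconnected or $\Gamma$ is non-constant needs the same argument carried out over a constant cover. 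The positive-characteristic hypotheses ($\calg/\calg^0$ finite, of order prime to $\Char(k)$) enter precisely to guarantee that $\Gamma$ is finite \'etale: this is what makes $\bun_\Gamma$ a $0$-dimensional Deligne--Mumford stack, makes $P_\calp\to C$ finite \'etale so that the twisting in Step 2 is fppf, and makes the groups $H_\calp$ tame --- without these the decomposition and the torsor structure both fail (e.g. $\bun_\Gamma$ can be positive-dimensional or non-Deligne--Mumford).
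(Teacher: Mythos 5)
Your overall architecture matches the paper's: decompose $\bun_\calg$ along the fibers of $\pi_*\colon\bun_\calg\to\bun_\Gamma$ over the finitely many residual gerbes $\rB\Aut(\calp)$, and exhibit the pullback along $\Spec k\to\rB\Aut(\calp)$ as $\bun_{\calg_\calp}$ for a twisted form $\calg_\calp$ of $\calg^\circ$. However, the step you explicitly defer --- showing that the ``$P_\calp$-twist of $\calg^\circ$'' is a well-defined group scheme and that $\widetilde\bun_\calp\simeq\bun_{\calg_\calp}$, despite the conjugation action of $\calg$ on $\calg^\circ$ only inducing an outer action of $\Gamma$ --- is precisely the mathematical content of the lemma, and your proposal does not supply it. The paper resolves it without any descent-datum bookkeeping: $\rB\calg\to\rB\Gamma$ is a gerbe (over $C$), so its pullback $\mathcal{Z}=C\times_{\calp,\rB\Gamma}\rB\calg$ is a gerbe over $C$; the hypothesis that $\calp$ lifts to a $\calg$-torsor $\cale$ gives a section of $\mathcal{Z}\to C$, and a gerbe with a section is the classifying stack of the automorphism group of that section. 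Concretely $\calg_\calp\subset\uAut(\cale)$ is the subgroup of automorphisms inducing $\id_\calp$, which is \'etale-locally isomorphic to $\calg^\circ$ and hence connected reductive. Saying the ambiguity ``can be rigidified or absorbed'' is exactly the claim to be proved, so as written the proposal has a gap at its central point; note also that the construction genuinely needs a chosen lift $\cale$ of $\calp$, not just nonemptiness of $\bun_\calp$.

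A second, smaller issue: you assert that under the hypotheses $\Gamma=\calg/\calg^\circ$ is finite \'etale, so that $\Gamma$-torsors are finite \'etale covers of degree $\deg\Gamma$ and are therefore bounded. In characteristic $0$ the theorem does not assume finiteness, and by the structure theory $\Gamma$ is in general only separated, quasi-finite and \'etale over $C$, so its torsors need not be finite over $C$ and the naive boundedness argument does not apply. The paper handles this by restricting to the open dense $U\subset C$ where $\Gamma$ is finite, describing quasi-finite \'etale schemes via $\pi_1^\et(U,u)$-sets with extra data at the punctures, and deducing finiteness of $\rH^1_\et(C,\Gamma)$ from the (pro)finite generation of $\pi_1^\et(U,u)$ in characteristic $0$. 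Your argument as stated covers only the case where $\Gamma$ happens to be finite.
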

We believe that this decomposition could be useful in extending various results about $\bun_{\calg}$ from the case of connected groups to the case of disconnected groups, as illustrated in its use to prove \Cref{intro.main.theorem.gms}. It also seems useful in the study of moduli of principal bundles under connected groups. This is manifested in its crucial use in our proof of nondensity of essentially finite $G$-bundles for connected reductive groups $G$ over $k$ (see below). 

To obtain projectivity of the moduli space we use the following auxiliary result which can be deduced from \cite[Theorem 4.4.3]{behrend.thesis} and might be worth stating explicitly since it has occurred in the literature in the case of moduli of semistable vector bundles (see \cite[Theorem 4.2]{weissmann} cf. \cite[Theorem 4.2]{hein}, \cite[Theorem I.4]{faltings}).
\begin{proposition}
Let $\calg$ be an affine smooth group scheme of finite type over $C$ and suppose that there is a notion of semistability on $\bun_{\calg}$ for which the substack of semistable bundles admits a proper good moduli space $M_{\calg,C}^{\Ss}$. For any finite cover $f\colon D\to C$ such that a $\calg$–bundle $\cale$ is semistable if and only if $f^*\cale$ is semistable, $f$ induces via pullback a finite morphism $f^*\colon M_{\calg,C}^{\Ss}\to M_{f^*\calg,D}^{\Ss}$.
\end{proposition}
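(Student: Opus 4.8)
The plan is to show that $f^*$ is affine and proper on the level of good moduli spaces and then conclude via the standard fact that a morphism which is affine and proper is finite. Pullback along $f$ defines a morphism of algebraic stacks $\bun_{\calg,C}\to\bun_{f^*\calg,D}$, and by \cite[Theorem 4.4.3]{behrend.thesis} this morphism is affine. (Since $f$ is finite, $\bun_{f^*\calg,D}$ is canonically equivalent to $\bun_{\Res_{D/C}f^*\calg,C}$, and under this equivalence $f^*$ is the morphism induced by the unit homomorphism $\calg\to\Res_{D/C}f^*\calg$; the finiteness of $f$ is what makes the target group scheme, and this morphism, well behaved.) The hypothesis that a $\calg$-bundle $\cale$ is semistable if and only if $f^*\cale$ is semistable says precisely that $\bun_{\calg,C}^{\Ss}$ is the preimage of the open substack $\bun_{f^*\calg,D}^{\Ss}$ under $f^*$, so by restriction we obtain an affine morphism $\phi\colon\calx\to\caly$ with $\calx:=\bun_{\calg,C}^{\Ss}$ and $\caly:=\bun_{f^*\calg,D}^{\Ss}$. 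Writing $\pi_\calx\colon\calx\to X:=M_{\calg,C}^{\Ss}$ and $\pi_\caly\colon\caly\to Y:=M_{f^*\calg,D}^{\Ss}$ for the good moduli space morphisms, the universal property of good moduli spaces produces a unique morphism $\bar\phi\colon X\to Y$ with $\pi_\caly\circ\phi=\bar\phi\circ\pi_\calx$; this is the morphism $f^*$ in the statement, and it remains to see it is finite.

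I expect affineness of $\bar\phi$ to be the main point, and would prove it \'etale-locally on $Y$. Choose an \'etale surjection $U\to Y$ with $U$ an affine scheme. Good moduli space morphisms are stable under flat base change, so $\caly_U:=\caly\times_Y U\to U$ is a good moduli space; moreover $X_U:=X\times_Y U\to X$ is \'etale, so $\calx_U:=\calx\times_X X_U=\calx\times_Y U\to X_U$ is also a good moduli space. The base change $\phi_U\colon\calx_U\to\caly_U$ of $\phi$ is again affine, and composing it with the cohomologically affine morphism $\caly_U\to U$ shows that $\calx_U\to U$ is cohomologically affine. Since $U$ is affine, the relative spectrum over $U$ of the quasi-coherent push-forward algebra $(\calx_U\to U)_*\calo_{\calx_U}$ is then a good moduli space of $\calx_U$, and hence is identified with $X_U$ by uniqueness of good moduli spaces; in particular $X_U\to U$ is affine. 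As affineness is \'etale-local on the target, $\bar\phi\colon X\to Y$ is affine.

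It remains to check that $\bar\phi$ is proper. But $X=M_{\calg,C}^{\Ss}$ is proper over $k$ by hypothesis, while $Y=M_{f^*\calg,D}^{\Ss}$ is separated (the good moduli spaces occurring here are separated, since $\bun_\calg$ has affine, hence separated, diagonal), so $\bar\phi$ is proper. A morphism of algebraic spaces which is both affine and proper is finite, which completes the argument. The only genuinely non-formal ingredient is the affineness of $f^*$ on the level of stacks, supplied by \cite[Theorem 4.4.3]{behrend.thesis}; among the formal steps, the one requiring care is the \'etale-local identification of $X\times_Y U$ as the relative spectrum of a push-forward algebra, which rests on the compatibility of good moduli spaces with flat base change and on the structure of cohomologically affine stacks over an affine base.
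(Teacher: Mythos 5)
Your proposal is correct and follows essentially the same route as the paper: Behrend's theorem gives affineness of $f^*$ on the semistable loci (which are preimages of each other by the semistability hypothesis), this descends to an affine morphism of good moduli spaces, and properness of the source combined with separatedness of the target yields properness, hence finiteness. The only differences are cosmetic: you spell out the \'etale-local descent of affineness to the good moduli spaces, which the paper asserts without proof, and you justify separatedness of the target via the affine diagonal of the stack rather than simply invoking that both moduli spaces are assumed proper (the latter is the cleaner justification, since good moduli spaces of stacks with affine diagonal need not be separated in general).
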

\subsection{Essentially finite bundles}\label{intro.ef}
Building on work by Weil in the 1930's, Nori introduced in \cite{Nori1} the notion of an essentially finite (EF) vector bundle.\footnote{He studied these bundles over more general bases and over fields which are not necessarily algebraically closed.} A vector bundle is EF if it admits a reduction to a finite group. Essentially finite vector bundles form a Tannakian category and the corresponding fundamental group $\pi_1^N(C)$ 
classifies covers of $C$ by finite group schemes (omitting base points from the notation). For example, if $\Char(k)=0$ then $\pi_1^N(C)(k)=\pi_{1}^{\et}(C)$. EF bundles have since then been studied extensively from various perspectives (e.g. \cite{esnault.et.al}, \cite{biswas.dossantos},  \cite{borne.vistoli}). If $G$ is any algebraic group one may extend the definition to $G$-bundles by saying that a $G$-bundle is EF if it admits a reduction to a finite group. A heuristic question one may ask is the following.
\begin{question}\label{how.many.ef.bundles}
How many essentially finite bundles are there?
\end{question}
Nori proved in \cite{Nori2} that EF vector bundles are semistable of degree 0 and in \cite{ghiasabadi.reppen} the second author and Ghiasabadi proved more generally that EF $G$-bundles are semistable of torsion degree, for any connected reductive group $G$ over $k$. Hence, for any such group $G$, in the moduli space $M_{G}^{\Ss,0}$ of semistable $G$-bundles of degree $0$, we have a subset $M_{G}^{\ef}\subset M_{G}^{\Ss,0}$ consisting of EF $G$-bundles of degree 0. A first approximation of \Cref{how.many.ef.bundles} is to ask about the size of $M_{G}^{\ef}$. For instance, if $G=\mg_m$, then EF bundles correspond to torsion points on the Jacobian. Since torsion points are dense in any abelian variety, we see that $M_{\mg_m}^{\ef}\subset M_{\mg_m}^{\Ss,0}$ is dense. In \cite{ducrohet.mehta} Ducrohet and Mehta showed that if $\Char(k)=p>0$, then $M_{\GL(n)}^{\ef}$ is dense for all $n\geq 1$. However, Weissmann proved in \cite{weissmann} that those EF bundles which are trivialized by a prime to $p$ cover are not dense by giving an explicit upper bound on the dimension of their Zariski closure in $ M_{\GL(n)}^{\Ss,0}$. In a similar direction, it is proved in \cite{ghiasabadi.reppen} that if $\Char(k)=0$, then $M_{G}^{\ef}$ is not dense for any connected reductive group $G$ of semisimple rank 1. In this article we settle this question of density completely in the characteristic 0 case. 
\begin{theorem}\label{density.question.main.theorem.intro}
Let $\Char(k)=0$, let $g$ denote the genus of $C$ and assume that $g\geq 2$. For any connected, reductive group $G$ we have that
\begin{equation}
    \dim\overline{M_{G}^{\textnormal{ef}}}\leq g \rk(G).
\end{equation}
In particular, if $G$ is not a torus, then $M_{G}^{\textnormal{ef}}$ is not dense in $M_{G}^{\Ss,0}$.
\end{theorem}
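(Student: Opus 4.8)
The plan is to show that every essentially finite (EF) $G$-bundle reduces its structure group to one of \emph{finitely many} reductive subgroups of $G$ that are of strictly smaller dimension but still of rank $\leq\rk(G)$, and then to induct on $\dim G$, using \Cref{decomposition.theorem.intro} to replace a disconnected such subgroup by connected reductive group schemes over $C$ of the same rank. The induction bottoms out at tori over $C$, where a Riemann--Roch computation produces exactly the bound $g\cdot\rk$.

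First, an EF $G$-bundle has the form $F\times^{\Gamma}G$ with $\Gamma\leq G$ finite and $F$ a $\Gamma$-torsor on $C$. In characteristic $0$, for fixed $\Gamma$ there are finitely many $\Gamma$-torsors on $C$ (as $\pi_1^{\et}(C)$ is topologically finitely generated) and finitely many conjugacy classes of homomorphisms $\Gamma\to G$ (rigidity of representations of finite groups), and finitely many finite groups of bounded order; so EF bundles arising from subgroups of any fixed bounded order form a finite set and contribute nothing to $\dim\overline{M_G^{\ef}}$. For $\Gamma$ of large order, Jordan's theorem gives $J=J(G)$ with an abelian normal subgroup $A\trianglelefteq\Gamma$ of index $\leq J$; being a finite group of commuting semisimple elements, $A$ lies in a maximal torus, so $C_G(A)$ is connected reductive of rank $\rk(G)$ and $\Gamma\subseteq N_G(A)\subseteq N_G(C_G(A))$. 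Since $C_G(A)$ is determined by the set of roots vanishing on $A$ it takes finitely many values up to conjugacy, and $N_G(C_G(A))/C_G(A)$ is finite, so the groups $N_G(A)$ fall into finitely many conjugacy classes; if $A\not\subseteq Z(G)$ then $C_G(A)\subsetneq G$ and $\dim N_G(A)<\dim G$. If $A\subseteq Z(G)$ then the image of $\Gamma$ in $G/Z(G)^{\circ}$ has order $\leq J\cdot|\pi_0 Z(G)|$, so $\Gamma$ lies in the preimage $H$ of one of finitely many bounded subgroups of $G/Z(G)^{\circ}$, and each such $H$ is reductive with $H^{\circ}=Z(G)^{\circ}$ a torus of rank $\leq\rk(G)$ and $\dim H=\dim Z(G)^{\circ}<\dim G$ unless $G$ is a torus. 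Summarizing: unless $G$ is a torus, every EF $G$-bundle is induced from an EF bundle under a conjugate of one of finitely many reductive subgroups $\calh\subseteq G$ with $\dim\calh<\dim G$ and $\rk(\calh^{\circ})\leq\rk(G)$. Induction along $\calh\hookrightarrow G$ sends EF $\calh$-bundles to EF $G$-bundles, and since semistability is open and the induced rational map $M^{\Ss,0}_{\calh}\dashrightarrow M^{\Ss,0}_{G}$ (using \Cref{intro.main.theorem.gms}) is defined near the EF locus, $\dim\overline{M_G^{\ef}}\leq\max_{\calh}\dim\overline{M_{\calh}^{\ef}}$.

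We are thus reduced to proving $\dim\overline{M_{\calh}^{\ef}}\leq g\cdot\rk(\calh^{\circ})$ for $\calh$ a (possibly disconnected) reductive group scheme over $C$, by induction on $\dim\calh$ (the theorem being the case $\calh=G$). Apply \Cref{decomposition.theorem.intro}: $\bun_{\calh}\cong\coprod_{\calp}\bun_{\calp}$ with finite torsors $\bun_{\calh_{\calp}}\to\bun_{\calp}$ and $\calh_{\calp}$ connected reductive over $C$; the construction in the proof of \Cref{decomposition.theorem.intro} builds $\calh_{\calp}$ from $\calh^{\circ}$ by finite twisting, so $\dim\calh_{\calp}=\dim\calh^{\circ}$ and $\rk(\calh_{\calp})=\rk(\calh^{\circ})$. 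EF-ness is preserved and reflected by the finite torsor, which induces a finite surjection of good moduli spaces, so $\dim\overline{M_{\calh}^{\ef}}=\max_{\calp}\dim\overline{M_{\calh_{\calp}}^{\ef}}$. If some $\calh_{\calp}$ is not a torus, it is connected reductive over $C$ with $\dim\calh_{\calp}<\dim\calh\leq\dim G$, and we re-apply the previous paragraph to it (Jordan's theorem and the structure of centralizers of diagonalizable subgroups go through for reductive group schemes over $C$, e.g.\ after a finite base change trivializing the relevant finite subgroup scheme), strictly decreasing the dimension; so the recursion terminates at tori over $C$. For $\call$ a torus over $C$ of rank $r$, every $\call$-bundle is semistable and $M_{\call}^{\Ss,0}$ is an abelian variety with $\Lie=H^{1}(C,\Lie\call)$, where $\Lie\call$ is an étale-locally-trivial (hence degree-$0$) vector bundle of rank $r$ with $h^{0}\leq r$ (the multiplicity of the trivial subrepresentation in the cocharacter representation of $\pi_1^{\et}(C)$), so Riemann--Roch gives $h^{1}=h^{0}+r(g-1)\leq rg$ and $\dim\overline{M_{\call}^{\ef}}\leq\dim M_{\call}^{\Ss,0}\leq rg=g\cdot\rk(\call)$. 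Assembling the reductions yields $\dim\overline{M_G^{\ef}}\leq g\cdot\rk(G)$; and since $\dim M_G^{\Ss,0}=(\dim G)(g-1)+\dim Z(G)>g\cdot\rk(G)$ whenever $g\geq 2$ and $G$ is not a torus, the nondensity statement follows.

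The main obstacle is the reduction in the second paragraph: converting the mere existence of a reduction to \emph{some} finite subgroup into a reduction to one of \emph{finitely many} reductive subgroups that are simultaneously of rank $\leq\rk(G)$ and of strictly smaller dimension. This is exactly what makes the dimension induction terminate at tori and forces the clean bound $g\cdot\rk(G)$; by comparison, the rigidity and finiteness inputs, the semistability bookkeeping, and the Riemann--Roch estimate are routine. The one point requiring genuine care is verifying that \Cref{decomposition.theorem.intro} preserves the rank (equivalently the dimension) of the identity component, which must be extracted from its proof.
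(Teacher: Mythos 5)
Your overall strategy---reduce essentially finite bundles to finitely many subgroups via Jordan's theorem and then bound dimensions, terminating at tori where the bound is $g\cdot\rk$---is the same as the paper's, but the reduction step as you run it has a genuine gap. You write that the abelian normal subgroup $A \trianglelefteq \Gamma$ produced by Jordan's theorem, ``being a finite group of commuting semisimple elements, lies in a maximal torus.'' This is false in general: the Klein four-group in $\mathrm{PGL}_2$ generated by the images of $\mathrm{diag}(1,-1)$ and the permutation matrix consists of commuting semisimple elements, yet it is contained in no maximal torus (maximal tori of $\mathrm{PGL}_2$ are one-dimensional, so their finite subgroups are cyclic, and $(\mz/2\mz)^2$ is not). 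Everything downstream depends on this toral containment: that $C_G(A)$ is connected reductive of full rank, that the groups $N_G(A)$ fall into finitely many conjugacy classes indexed by sets of weights, and that the recursion terminates at groups of rank at most $\rk(G)$. The statement you need is true but is a theorem, not an observation: \Cref{general.jordan.intext} is exactly a strengthened Jordan theorem asserting that $A$ can be \emph{chosen} inside a torus of $G^{\circ}$, and the entire \Cref{appendix} is devoted to proving it via a counting argument with $M$-fans. (In the $\mathrm{PGL}_2$ example one must pass from the Klein four-group to a cyclic index-two subgroup; the classical Jordan theorem applied to a faithful representation $G\hookrightarrow\GL(n)$ gives $A$ diagonalizable in $\GL(n)$, which does not imply $A$ lies in a torus of $G$.)

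A second, structural problem is that your recursion routes the group theory through the non-constant connected reductive group schemes $\calh_{\calp}$ over $C$ produced by \Cref{decomposition.theorem.intro}, asserting that Jordan's theorem and the centralizer analysis ``go through'' there. None of that is established (what is a finite subgroup scheme of $\calh_{\calp}$, what is conjugacy, why finitely many classes?), and it is also unnecessary: the paper's \Cref{main.theorem.on.finite.groups} runs the entire induction over $k$, producing in one pass finitely many subgroups $H_i \subset G$ with $H_i^{\circ}$ a torus, and only afterwards invokes the decomposition for the constant group schemes $H_i\times C$ to control degrees and moduli (\Cref{ef.bundles.degree.0}, \Cref{ss.deg.0.to.ss}). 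Relatedly, your appeal to a rational map $M_{\calh}^{\Ss,0}\dashrightarrow M_{G}^{\Ss,0}$ ``defined near the EF locus'' for intermediate $\calh$ with non-toral identity component is not justified---openness of the relevant locus on the stack does not automatically descend to an open subset of the good moduli space---whereas the paper only ever pushes forward from groups with toral identity component in degree $0$, where \Cref{ss.deg.0.to.ss} yields an honest morphism of moduli spaces. Your Riemann--Roch computation giving $g\cdot r$ at the torus stage is fine and agrees with the estimate in \Cref{dimension.arguments}.
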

\begin{remark}
If $g = 0$, then $M_G^{\Ss}(k) = \{ \ast \}$, and if $g=1$, then \cite[Proposition 4.9]{ghiasabadi.reppen} shows that $M_{G}^{\ef}$ is dense for all connected reductive groups $G$. The same holds for all $g$ if 
$G \cong \mg_m^n$ is a torus, since then $M_G^{\Ss} \cong \prod_{i=1}^n M_{\mg_m}^{\Ss}$.
\end{remark}
A key input in proving \Cref{density.question.main.theorem.intro} is the following purely group theoretic result inspired by Jordan's classical result in \cite{jordan} on finite subgroups of reductive groups, which could be of independent interest.
\begin{theorem}\label{theorem.finite.subgroups}
Let $\Char(k)=0$, and let $G$ be a connected, reductive group over $k$. There exist subgroups $H_1, \ldots, H_n \subset G$ such that the connected component of identity of each $H_i$ 
is a torus and such that all finite subgroups of $G$, up to conjugation, are contained in some $H_i$.
\end{theorem}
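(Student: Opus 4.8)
The plan is to argue by induction on $\dim G$, proving the slightly more flexible statement in which $G$ is allowed to be a possibly disconnected reductive group over $k$; the case $\dim G=0$ is trivial (take $H_1=G$, whose identity component is a rank-$0$ torus). For the inductive step, fix a finite subgroup $F\subseteq G$. Since $\Char(k)=0$, the group $F$ is linearly reductive, and by Jordan's theorem (applied to $F\cap G^{\circ}$ inside $G^{\circ}$, then passing to the core in $F$ to achieve normality) there is an abelian subgroup $A\trianglelefteq F$ whose index is bounded by a constant $J=J(G)$ depending only on $G$. As $A$ consists of semisimple elements, it lies in some maximal torus of $G^{\circ}$. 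I would then split into two cases, according to whether or not $A$ centralises $G^{\circ}$.

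If $A$ does \emph{not} centralise $G^{\circ}$, set $M:=Z_G(A)^{\circ}$. Then $M$ is a connected reductive subgroup of $G^{\circ}$ of maximal rank (it contains the maximal torus of $G^{\circ}$ containing $A$), and since $A$ does not centralise $G^{\circ}$ and $M$ is connected, $\dim M<\dim G$. Because $F$ normalises $A$, it normalises $Z_G(A)$ and hence $M$, so $F\subseteq N_G(M)$; and since $M$ has maximal rank, $N_G(M)^{\circ}=M$, so $N_G(M)$ is again a (disconnected) reductive group, now of dimension $\dim M<\dim G$. By the inductive hypothesis applied to $N_G(M)$, the subgroup $F$ is conjugate, inside $N_G(M)$, into one of finitely many subgroups of $N_G(M)$ with toral identity component. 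To assemble a single finite list valid for \emph{all} $F$ falling into this case, I would use that maximal-rank connected reductive subgroups of $G^{\circ}$ form only finitely many conjugacy classes (they correspond to the finitely many closed subsystems of the root system), so that only finitely many groups $N_G(M)$ occur up to conjugacy.

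If $A$ \emph{does} centralise $G^{\circ}$, i.e.\ $A\subseteq Z_G(G^{\circ})$, let $R:=Z(G^{\circ})^{\circ}$ be the central torus of $G^{\circ}$; it is normal in $G$, and $Z_G(G^{\circ})/R$ is finite, of order $e$ say. Writing $\pi\colon G\to \bar G:=G/R$, the image $\pi(A)$ has order at most $e$, hence $\pi(F)$ has order at most $Je$. Now I would invoke the standard rigidity statement: a reductive group over a field of characteristic $0$ has only finitely many conjugacy classes of finite subgroups of any fixed order (for each abstract finite group $\Gamma$ the vanishing of $H^{1}(\Gamma,\Lie\,\bar G)$ makes $\Hom(\Gamma,\bar G)/\bar G$ finite). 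Choosing representatives $\bar F_1,\dots,\bar F_m\subseteq\bar G$ of the subgroups of order $\le Je$, the group $F$ is conjugate into some $\pi^{-1}(\bar F_l)$, which is an extension of the finite group $\bar F_l$ by the torus $R$ and so has toral identity component. Taking the union of the finite lists produced in the two cases completes the induction.

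The step I expect to be the main obstacle is the first case: a priori the auxiliary group $M=Z_G(A)^{\circ}$, and with it $N_G(M)$, depends on $F$, so the argument only yields finitely many $H_i$ because of the finiteness of conjugacy classes of maximal-rank reductive subgroups and the control this gives on the groups $N_G(M)$; likewise the second case genuinely relies on the rigidity input to tame the infinitely many possible $F$ of bounded image in $\bar G$. It is precisely to run this induction cleanly — in particular to make sense of and recurse on $N_G(M)$, which is typically disconnected — that one should work with possibly disconnected reductive $G$ from the outset.
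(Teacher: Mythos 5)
Your overall strategy --- induction on $\dim G$ over possibly disconnected reductive groups, a Jordan-type theorem producing an abelian normal subgroup $A\trianglelefteq F$ of bounded index, a case split according to whether $A$ centralises $G^{\circ}$, dimension reduction via centralizers and normalizers in the first case, and rigidity of finite subgroups via vanishing of $H^{1}(\Gamma,\mathfrak{g})$ in the second --- is essentially the strategy of the paper's proof of \Cref{main.theorem.on.finite.groups}. But there is a genuine gap at the step ``As $A$ consists of semisimple elements, it lies in some maximal torus of $G^{\circ}$.'' Commuting semisimple elements of a reductive group need not lie in a common maximal torus: already in $\mathrm{PGL}_2$ the images of $\mathrm{diag}(1,-1)$ and of the coordinate-swap matrix generate a Klein four-group of semisimple elements whose centralizer is finite, so it is contained in no torus; analogous nontoral finite abelian subgroups exist in $\mathrm{PGL}_n$ and in exceptional groups. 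Classical Jordan gives an abelian normal subgroup of bounded index with no control on torality, and your reduction to $F\cap G^{\circ}$ plus passing to the normal core does not restore it.

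This is not cosmetic: Case 1 of your argument uses torality of $A$ essentially. You need $M=Z_G(A)^{\circ}$ to have maximal rank in order to (i) know that only finitely many $N_G(M)$ occur up to conjugacy and (ii) conclude $N_G(M)^{\circ}=M$, hence $\dim N_G(M)=\dim M<\dim G$ so that the induction applies. For a nontoral $A$ such as the Klein group above one can have $M=\{1\}$, whence $N_G(M)=G$ and the induction does not close. The paper's entire Appendix (\Cref{general.jordan}, following Breuillard) is devoted to exactly the missing strengthening: every finite $\Gamma\subset G$ contains an abelian normal subgroup of index at most $f_G$ that is \emph{moreover contained in a torus of $G^{\circ}$}. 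Granting that input, your proof is essentially the paper's (the paper controls the finitely many possible normalizers in Case 1 via weight kernels in a faithful representation rather than via conjugacy classes of maximal-rank reductive subgroups, but these devices are interchangeable, and your Case 2 matches the paper's use of \Cref{finite.conjugacy}); without it, the proof is incomplete.
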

\subsection{Outline}
In \Cref{section:notation} we introduce notation and assumptions used throughout the text. \Cref{section:moduli} concerns the moduli stack of $\calg$-bundles: in \Cref{section:decomposition} we prove that $\bun_{\calg}$ admits a decomposition as stated in \Cref{decomposition.theorem.intro} and after some discussion regarding semistability in \Cref{section:semistability} we prove in \Cref{section:existence.of.moduli} that the semistable locus $\bun_{\calg}^{\Ss}$ admits a projective good moduli space. \Cref{section:essentially.finite.bundles} is divided into two sections: in \Cref{section:finite.subgroups} we study finite subgroups of reductive groups over $k$ and we prove \Cref{theorem.finite.subgroups}. In \Cref{section:nondensity} we use the results from the previous sections to prove our main theorem on essentially finite bundles. Finally, \Cref{appendix} is devoted to a proof of a generalization of Jordan's theorem on finite subgroups of $\GL(n)$ to arbitrary reductive groups.
\subsection{Acknowledgements}
We would like to thank Magnus Carlson, Carlo Gasbarri, Siddharth Mathur, David Rydh, and Dario Weissmann for enlightening conversations. 
Part of this work was carried out while the second author was a JSPS International Research Fellow.
%
%
%
%
\section{Notation}\label{section:notation}
Let $k$ be an algebraically closed field and let $C$ be a smooth projective connected curve over $k$. From \Cref{gamma.descent.gms} onward we assume that $\Char(k)=0$.

For an affine group $G$ over a field $K$, let $G^\circ$ denote the connected component of identity. A reductive group over $K$ is a smooth affine group $G$ such that $G_{\overline{K}}^\circ$ has trivial unipotent radical. A reductive (resp. connected reductive) group over $C$ is a smooth, affine group scheme $\calg$ over $C$ whose geometric fibers $\calg_{x}$ are reductive (resp. connected reductive) groups for all geometric points $x$ in $C$. By a constant group scheme over $C$ we mean a group scheme isomorphic to $G\times_k C$ for some $k$-group $G$. For a finite type group scheme $\calg$ over $C$ we denote by $\Lie(\calg)$ the Lie algebra of $\calg$ (a vector bundle over $C$) and by $\Ad\colon \calg \to \GL(\Lie(\calg))$ the adjoint representation. If $G$ is an affine $k$-group we also use the notation $\mathfrak{g}$ for its Lie algebra.

By \cite[Proposition 3.1.3]{conrad.reductive.group.schemes}, for any reductive group scheme $\calg$ over $C$, there is an open and closed subgroup scheme $\calg^\circ$ which is a connected reductive group over $C$, and $\calg/\calg^\circ$ is a separated, \'etale, quasi-finite $C$-group. 

A $\calg$-bundle over $C$ is an fppf scheme $\cale\to C$ with a $\calg$-action such that $\cale \times_C \calg \to \cale \times_C \cale$, $(p,g)\mapsto (p,pg)$ is an isomorphism. Since all groups we consider are smooth, we note that every $\calg$-bundle is locally trivial in the \'etale topology. 
As is common in the literature we also refer to a $\calg$-bundle as a principal $\calg$-bundle or a $\calg$-torsor. Further, since $\calg$ is affine over $C$ we may view a bundle $\cale$ as a scheme over $C$ or as a sheaf. 
Conveniently but also quite uncommonly, we denote by $\bOne_{\calg}$ the trivial $\calg$-torsor.
%

We denote by $\bun_{\cG,C}$ the stack on the category of $k$-schemes whose fiber over $T$ is the groupoid of $T \times_k \cG$-torsors on $T \times_k C$. If the meaning is clear from context, we usually write $\bun_\calg$ in place of $\bun_{\calg,C}$. For a $\calg$–bundle $\cale$ over $C$, we write $\underline{\Aut}(\cale)$ for the $C$-group scheme of automorphisms of $\cale$ and we write $\Aut(\cale)$ for the $k$-group which is the stabilizer of $\cale \in \bun_{\calg}(k)$.

If $\cX$ is a stack on the category of $C$-schemes, we denote by $\res_{C/k}\cX$ the restriction of scalars of $\cX$ to $k$, defined as the stack whose fiber over a $k$-scheme $T$ is $\cX(\pr_2: T \times_k C \to C)$. 
If the left diagram below is a commutative (resp. cartesian) diagram of stacks over $C$, then the right diagram is a commutative (resp. cartesian) diagram of stacks over $k$.
\begin{center}
\begin{tikzcd}
  \cX' \arrow[r] \arrow[d] & \cX \arrow[d] \\
  \cY' \arrow[r] & \cY
\end{tikzcd}
\qquad
\begin{tikzcd}
  \res_{C/k}\cX' \arrow[r] \arrow[d] & \res_{C/k}\cX \arrow[d] \\
  \res_{C/k}\cY' \arrow[r] & \res_{C/k}\cY
\end{tikzcd}
\end{center}
We further denote by $\rB \calg$ the classifying stack of $\calg$. It follows from the definitions that $\bun_{\cG,C} \cong \res_{C/k}(\rB \cG)$. 
The stack $\bun_\cG$ is smooth and locally of finite type over $k$ (see e.g. \cite[Proposition 1.]{heinloth.uniformization}). 

If $\alpha \colon \calh \to \calh'$ is a morphism of affine algebraic groups, then for any $\calh$-bundle $\cale$ on $C$ we denote by $\alpha_{*}\cale$ the $\calh'$-bundle $\cale\times^{\calh} \calh'$. 
We remind the reader that for any morphism of $k$-schemes $f \colon D \to C$ we have a natural isomorphism 
\[
    f^* \alpha_* \cale \cong \alpha_* f^* \cale
\]
of $\calh'$-bundles on $D$.

Finally, given a stack $\mathcal{X}$, e.g. $\mathcal{X}=\bun_\calg$, and a notion of semistability on $\mathcal{X}$, we denote by $\mathcal{X}^{\Ss}$ the semistable locus.
\section{Moduli of principal bundles under nonconnected group schemes}\label{section:moduli}
\subsection{Decomposition of \texorpdfstring{$\bun_{\calg}$}{G}}\label{section:decomposition}
Fix a reductive group scheme $\calg$ over $C$. In this section we decompose $\bun_{\calg}$ into finitely many open and closed substacks $\bun_{\calp}$, each admitting a finite torsor $\bun_{\calg_{\calp}}\to \bun_{\calp}$ for some connected reductive group scheme $\calg_{\calp}$ over $C$. 
\begin{proposition}\label{proposition:bun-Gamma-discrete}
Let $\Gamma$ be a quasi-finite \'etale group scheme over $C$. If $\Char(k)>0$ assume furthermore that $\Gamma$ is finite and that $\Char(k)$ and $|\Gamma|$ are coprime. There is a finite decomposition 
  \[ \bun_{\Gamma,C} \cong \coprod_{[\calp] \in \rH_\et^1(C, \Gamma)} \rB \Aut(\calp). \]
\end{proposition}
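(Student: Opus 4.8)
The plan is to use the standard groupoid description of $\Gamma$-bundles together with the fact that in characteristic $0$ (or under the coprimality hypothesis in positive characteristic) the stack $\Bun_{\Gamma,C}$ is a disjoint union of quotient stacks of a point. First I would note that $\Bun_{\Gamma,C} \cong \res_{C/k}(\rB\Gamma)$ is an algebraic stack locally of finite type over $k$, and that its set of $k$-points is $\rH^1_\et(C,\Gamma)$ since $\Gamma$ is \'etale and every $\Gamma$-bundle is \'etale-locally trivial. The content of the statement is twofold: (i) this set is finite, and (ii) the stack is, as a stack, the disjoint union over isomorphism classes $[\calp]$ of the residual gerbes, each of which is $\rB\Aut(\calp)$ where $\Aut(\calp) = \uAut_C(\calp)(C)$ is a finite $k$-group (indeed a finite constant group, being a subgroup of a finite group after trivialization).

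For finiteness (i), I would argue as follows. Let $U \subset C$ be the largest open dense subscheme over which $\Gamma|_U$ is finite (in char $p$, $U = C$). Restriction gives a map $\rH^1_\et(C,\Gamma) \to \rH^1_\et(U,\Gamma|_U)$; one checks using the local structure of a quasi-finite separated \'etale group scheme at the finitely many points of $C \setminus U$ that this map has finite fibers, so it suffices to treat the case $\Gamma$ finite over $U$. Then a $\Gamma$-bundle over $U$ is a cocycle in $\rH^1_\et(\pi_1^\et(U,u), \Gamma_u)$ for a geometric base point $u$, where $\Gamma_u$ is a finite group on which $\pi_1^\et(U,u)$ acts continuously. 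Since $\Char(k) = 0$ (resp. $\Char(k) \nmid |\Gamma|$), the \'etale fundamental group $\pi_1^\et(U,u)$ is topologically finitely generated — this uses that $U$ is an open curve over an algebraically closed field and that its prime-to-$p$ quotient is topologically finitely generated — and a topologically finitely generated profinite group has only finitely many continuous homomorphisms to a fixed finite group, hence finitely many conjugacy classes of such, giving $\#\rH^1_\et(U,\Gamma|_U) < \infty$. This is the step I expect to be the main obstacle: one has to be careful about which fundamental group is finitely generated (the full $\pi_1^\et$ of the affine curve $U$ is \emph{not} topologically finitely generated in characteristic $p$, which is exactly why the coprimality hypothesis and the reduction to finite $\Gamma$ enter), and one must control ramification at $C \setminus U$ cleanly.

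For the decomposition (ii), I would invoke the general fact that an algebraic stack $\mathcal{X}$ locally of finite type over $k$ whose underlying topological space is discrete and all of whose points have affine stabilizers decomposes as $\coprod_{x} \overline{\{x\}}$ with each $\overline{\{x\}} = \mathcal{X}_x$ the residual gerbe at $x$; combined with (i) this gives a \emph{finite} disjoint union of gerbes. It remains to identify the residual gerbe at $[\calp]$ with $\rB\Aut(\calp)$. This is immediate: the residual gerbe at a $k$-point $[\calp]$ of an algebraic stack with affine inertia is $\rB_k G$ where $G$ is the automorphism group $k$-scheme of $\calp$, and here $\uAut_C(\calp)$ is a form of $\Gamma$, hence quasi-finite \'etale and separated over $C$ with a section (the identity), so $\Aut(\calp) = \uAut_C(\calp)(C)$ is a finite \'etale $k$-scheme with a rational point, i.e.\ a finite constant group over $k$; being a closed subgroup of the finite constant group $\Gamma_u$ after trivializing $\calp$. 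The only mild point to check is that $\Bun_{\Gamma,C}$ is really a disjoint union rather than merely admitting a stratification, i.e.\ that each $[\calp]$ is both open and closed; this follows because a stack locally of finite type over $k$ with discrete underlying space has open points, and a $0$-dimensional reduced subscheme (here the whole coarse space, by (i)) is also closed. Assembling these three inputs — finiteness of $\rH^1_\et$, the open-and-closed decomposition into residual gerbes, and the identification of each gerbe — yields the stated isomorphism.
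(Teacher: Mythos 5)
Your proposal is correct and follows essentially the same route as the paper: finiteness of $\rH^1_\et(C,\Gamma)$ is obtained by reducing the quasi-finite case to an open dense $U\subset C$ over which $\Gamma$ is finite and using that the relevant fundamental group is topologically finitely generated (for the full curve in characteristic $p$, for the open curve in characteristic $0$), after which the stack is identified with the finite disjoint union of residual gerbes $\rB\Aut(\calp)$. The paper packages the reduction to $U$ as an explicit Galois-theoretic equivalence of categories and verifies the gerbe decomposition by exhibiting an \'etale atlas $\coprod_{[\calp]}\Spec k \to \bun_{\Gamma,C}$ rather than citing general structure theory, but these are presentational rather than substantive differences from your argument.
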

\begin{proof}
Suppose first that $\Gamma$ is finite over $C$. Upon choosing a geometric base point $c$ in $C$, we have that $\rH_{\et}^1(C,\Gamma)=\rH^1(\pi_1^{\et}(C,c),\Gamma_c)$, where $\Gamma_c$ is the fiber of $\Gamma$ at $c$ equipped with its $\pi_1^{\et}(C,c)$-action. Since $\Gamma_c$ is finite and $\pi_1^\et(C,c)$ is (pro)finitely generated, we see that 
$\rH_{\et}^1(C,\Gamma)$ is finite. 

If $\Char(k)=0$ and $\Gamma$ is quasi-finite, then let $U\subset C$ be the largest open dense subset such that $\Gamma|_U$ is finite, and let $u$ be a geometric point in $U$. Then the functor
\begin{equation*}
    P \mapsto (P_u, \{ P_x \}_{x\in C\setminus U})
\end{equation*}
determines an equivalence of categories between the category of quasi-finite \'etale schemes $P\to C$ such that $P|_U$ is finite, and the category consisting of tuples $(\Sigma, \{F_x\}_{x\in C\setminus U})$ where $\Sigma$ is a finite $\pi_1^{\et}(U,u)$-set and $F_x \subset \Sigma^{\gal(k(C)^{\text{sep}}/K_x^{\text{h}})}$, where $K_x^{\text{h}}$ denotes the fraction field of the henselization of $\calo_{X,x}$. This functor respects products and takes group objects to group objects. The essential image of all $\Gamma$-bundles $P\to C$ thus consists of tuples $(\Sigma, \{F_x\}_{x\in C\setminus U})$ where $\Sigma$ is a $(\pi_1^{\et}(U,u),\Gamma_u)$-set and the $F_x$ are $\Gamma_x$-sets, where the action of $\Gamma_u$ on $\Sigma$ (resp. $\Gamma_x$ on $F_x$) is free and transitive. To give such a $(\pi_1^{\et}(U,u),\Gamma_u)$-set is equivalent to giving a cocycle in $\rH^1(\pi_1^{\et}(U,u),\Gamma_u)$. Since $\Char(k)=0$, $\pi_1^{\et}(U,u)$ is (pro)finitely generated and thus $\rH^1(\pi_1^{\et}(U,u),\Gamma_u)$ is finite.

Hence, in either case there are only finitely many isomorphism classes of $\Gamma$-bundles on $C$. 
  Since a $k$-valued point of $\bun_{\Gamma,C}$ is a $\Gamma$-bundle on $C$, we have a surjection
  \[ Y \coloneqq \coprod_{[\calp] \in \rH_\et^1(C, \Gamma)} \Spec{k} \to \bun_{\Gamma,C}. \]
  In particular, $\bun_{\Gamma,C}$ is quasi-compact.
  Moreover, since all stabilizer groups are finite, $\bun_{\Gamma,C}$ is a smooth Deligne-Mumford stack (see e.g. \cite[Theorem 3.6.1]{alper.stacksandmoduli}).  
  Thus, there exists an \'etale surjection $U \to \bun_{\Gamma,C}$ from a smooth, quasi-compact $k$-scheme $U$. 
  Consider the cartesian square
  \begin{center}
  \begin{tikzcd}
    V \arrow[r] \arrow[d] & U \arrow[d] \\
    Y \arrow[r] & \bun_{\Gamma,C}.
  \end{tikzcd}
  \end{center}
  Since $V \to Y$ is \'etale, the scheme $V$ is a disjoint union of copies of $\Spec{k}$, and since $V \to U$ is surjective, we see that $U$ is a finite disjoint union of copies of $\Spec{k}$ as well. Thus, $V \to U$ is \'etale, hence so is $Y \to \Bun_{\Gamma,C}$. The result follows.
\end{proof}
From now on, let $\Gamma\coloneqq \calg/\calg^\circ$, and let
\[ 
  1 \to \calg^\circ \xrightarrow{\iota} \calg \xrightarrow{\pi} \Gamma \to 1
\]
be the corresponding short exact sequence of affine algebraic group schemes over $C$.
The homomorphisms $\iota$ and $\pi$ induce
morphisms of stacks $\iota_*\colon \Bun_{\calg^\circ} \to \Bun_\calg$
and $\pi_*\colon \Bun_\calg \to \Bun_\Gamma$.
\begin{notation}\label{notation:BunP}
For any $\Gamma$-torsor $\calp$ over $C$, we define the stack $\Bun_\calp$ to be the preimage of the residual gerbe $\rB \Aut(\calp)\hookrightarrow \Bun_\Gamma$ corresponding to $\calp$. In other words, we have a cartesian diagram
\begin{equation*}
\begin{tikzcd}
  \Bun_\calp \arrow[r] \arrow[d] & \Bun_\calg \arrow[d, "\pi_{*}"] \\
  \rB\Aut(\calp) \arrow[r, "\calp"] & \Bun_{\Gamma}
\end{tikzcd}
\end{equation*}
where the horizontal maps are open and closed embeddings.
\end{notation}
%
%
%
%
%
%
%
\begin{lemma}\label{bunp.admits.finite.torsor}
Let $\calp$ be a $\Gamma$-torsor over $C$ admitting a lift to a $\calg$-torsor. There is a connected reductive group scheme $\mathcal{G}_\calp$ over $C$ and an $\Aut(\calp)$-torsor $\Bun_{\mathcal{G}_\calp} \to \Bun_\calp$.
\end{lemma}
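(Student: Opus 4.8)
The plan is to realize both $\bun_{\calg_\calp}$ and $\bun_\calp$ as base changes of a single fibration of stacks over $C$, and then deduce that one is a finite quotient of the other. Throughout, I fix, using the hypothesis, a $\calg$-torsor $E_0$ on $C$ together with an isomorphism $\phi_0\colon\pi_*E_0\xrightarrow{\sim}\calp$ of $\Gamma$-torsors.

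First I would construct $\calg_\calp$. Since $\calg^\circ$ is normal in $\calg$, the group $\calg$ acts on $\calg^\circ$ by conjugation, and I set $\calg_\calp:=E_0\times^{\calg}\calg^\circ$, the twist of $\calg^\circ$ by $E_0$ along this action. As $\calg$ is smooth, $E_0$ is \'etale-locally trivial on $C$, so $\calg_\calp$ is \'etale-locally on $C$ isomorphic to $\calg^\circ$; since smoothness and affineness over $C$, and the property of having connected reductive geometric fibers, can all be checked \'etale-locally on $C$, this shows $\calg_\calp$ is a connected reductive group scheme over $C$. I would also record that twisting by $E_0$ is exact on $\calg$-equivariant (group) schemes, being the identity \'etale-locally on $C$; applying it to $1\to\calg^\circ\to\calg\xrightarrow{\pi}\Gamma\to 1$ therefore identifies $\calg_\calp$ with the kernel of the homomorphism $\underline{\Aut}_\calg(E_0)=E_0\times^{\calg}\calg\to\underline{\Aut}_\Gamma(\pi_*E_0)$ induced by $\pi$; equivalently, $\calg_\calp\cong\underline{\Aut}_C(E_0,\phi_0)$, the sheaf of automorphisms of the $\calg$-torsor $E_0$ inducing the identity on $\pi_*E_0\cong\calp$.

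Next, working with stacks over $C$, I would form the $2$-fiber product $\calf:=\rB\calg\times_{\rB\Gamma,\calp}C$, where $\rB\calg\to\rB\Gamma$ is induced by $\pi$; concretely, its $T$-points, for a $C$-scheme $T$, are pairs $(E,\phi)$ with $E$ a $\calg_T$-torsor and $\phi\colon\pi_*E\xrightarrow{\sim}\calp_T$. The crucial claim is that $\calf\cong\rB\calg_\calp$ as stacks over $C$. To prove it I would show $\calf$ is an \'etale gerbe over $C$: \'etale-locally on $C$ the torsor $\calp$ trivializes, and then the trivial $\calg$-torsor provides an object, so $\calf$ is \'etale-locally nonempty; and if $(E,\phi)$ is any object over $T$, then $\pi_*E$ is \'etale-locally trivial on $T$, whence $E$ reduces to $\ker\pi=\calg^\circ$ \'etale-locally and hence, $\calg^\circ$ being smooth, is \'etale-locally trivial, so any two objects are \'etale-locally isomorphic. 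The global object $(E_0,\phi_0)$ then neutralizes this gerbe, and its automorphism sheaf is $\underline{\Aut}_C(E_0,\phi_0)\cong\calg_\calp$ by the previous step, so $\calf\cong\rB\calg_\calp$. I expect this identification --- that $\calf$ is a gerbe and that its band is $\calg_\calp$ --- to be the heart of the argument; it is the standard torsor-twisting bookkeeping of nonabelian cohomology, the only care needed being with left-versus-right actions and with the fact that $\calg^\circ=\ker\pi$ absorbs the local triviality of $\pi_*E$.

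Finally I would transfer everything to $k$ via $\res_{C/k}$. Using that $\res_{C/k}$ sends cartesian squares of stacks over $C$ to cartesian squares over $k$ (recorded in \Cref{section:notation}), that $\res_{C/k}\rB\calh=\bun_\calh$ for any group scheme $\calh$ over $C$, that $\res_{C/k}C=\Spec k$ (restriction of scalars of the terminal $C$-stack is the terminal $k$-stack), and that $\res_{C/k}(\calp\colon C\to\rB\Gamma)$ is the $k$-point $[\calp]\in\bun_\Gamma(k)$, the isomorphism $\calf\cong\rB\calg_\calp$ becomes $\bun_{\calg_\calp}\cong\bun_\calg\times_{\bun_\Gamma,[\calp]}\Spec k$. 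I would then factor $[\calp]$ as $\Spec k\xrightarrow{a}\rB\Aut(\calp)\hookrightarrow\bun_\Gamma$, where the second arrow is the open and closed embedding of \Cref{notation:BunP} and $a$ is the canonical atlas of the residual gerbe, hence an $\Aut(\calp)$-torsor; combining with the cartesian square of \Cref{notation:BunP} defining $\bun_\calp$ then yields $\bun_{\calg_\calp}\cong\bun_\calp\times_{\rB\Aut(\calp),a}\Spec k$. Hence $\bun_{\calg_\calp}\to\bun_\calp$ is the base change along $\bun_\calp\to\rB\Aut(\calp)$ of the $\Aut(\calp)$-torsor $a$, and is therefore an $\Aut(\calp)$-torsor, as required. (The lifting hypothesis is used exactly once, to produce the global object $E_0$ neutralizing $\calf$.)
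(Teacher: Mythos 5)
Your proof is correct and follows essentially the same route as the paper: identify the fiber of $\rB\calg\to\rB\Gamma$ over $\calp\colon C\to\rB\Gamma$ as a gerbe neutralized by the lift, recognize its band as an \'etale form of $\calg^\circ$, apply $\res_{C/k}$, and factor the point $[\calp]$ through the residual gerbe $\rB\Aut(\calp)$. The only differences are cosmetic: you construct $\calg_\calp$ up front as the explicit twist $E_0\times^{\calg}\calg^\circ$ and verify the gerbe axioms by hand, where the paper cites the Stacks Project lemmas on gerbes and extracts $\calg_\calp$ afterwards as the automorphism sheaf of the lift.
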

\begin{proof}
Suppose first that $\calp \cong \bOne_\Gamma$ is the trivial $\Gamma$-torsor. The short exact sequence
\[ 1 \to \calg^\circ \xrightarrow{\iota} \calg \xrightarrow{\pi} \Gamma \to 1 \]
induces a cartesian diagram of stacks over $C$
\begin{equation}\label{diagram:BG-Speck-over-BGamma}
    \begin{tikzcd}
        \rB\calg^\circ  \arrow[r, "\iota_{*}"] \arrow[d] & \rB \calg \arrow[d, "\pi_{*}"] \\
        C \arrow[r, "\bOne_{\Gamma}"] & \rB\Gamma
    \end{tikzcd}
\end{equation}
Since restricting scalars from $C$ to $\Spec{k}$ preserves fiber products,
applying 
$\res_{C/k}$ to the diagram above 
we obtain a cartesian square
\begin{equation*}
    \begin{tikzcd}
        \bun_{\cG^\circ} \arrow[r, "\iota_{*}"] \arrow[d] & \bun_{\calg} \arrow[d, "\pi_{*}"] \\
        \Spec k \arrow[r, "\bOne_\Gamma"] & \bun_{\Gamma}
    \end{tikzcd}
\end{equation*}
This is readily seen to sit inside the larger commutative diagram
\begin{equation*}
    \begin{tikzcd}
        \bun_{\calg^{\circ}} \arrow[r, "\iota_{*}"] \arrow[d] & \bun_{\bOne_\Gamma} \arrow[r] \arrow[d] & \bun_{\cG} \arrow[d, "\pi_{*}"] \\
        \Spec k \arrow[r] & \rB\Gamma \arrow[r, "\bOne_\Gamma"] & \bun_{\Gamma}.
    \end{tikzcd}
\end{equation*}
Since the outer square and the right hand square are cartesian, so is the left hand square. Hence, $\iota_* \colon \bun_{\cG^{\circ}} \to \bun_{\bOne_\Gamma}$ is a $\Gamma$-torsor.

Let now $\calp$ be an arbitrary $\Gamma$-torsor over $C$ admitting a lift to a $\calg$-torsor $\cale$. 
Using \cite[\href{https://stacks.math.columbia.edu/tag/06QF}{Lemma 06QF}]{stacks-project}, we deduce from \eqref{diagram:BG-Speck-over-BGamma} 
that the morphism $\rB \calg \to \rB \Gamma$ is a gerbe, and so by \cite[\href{https://stacks.math.columbia.edu/tag/06QE}{Lemma 06QE}]{stacks-project}, the stack $\mathcal{Z}$ defined by the cartesian square
\begin{equation}\label{stack.of.lifts.of.Pi}
    \begin{tikzcd}
        \mathcal{Z} \arrow[r] \arrow[d] & \rB \calg \arrow[d, "\pi_{*}"] \\
        C \arrow[r, "\calp"'] & \rB\Gamma
    \end{tikzcd}
\end{equation}
is also a gerbe.
The assumption that $\calp$ has a lift to a $\calg$-torsor
implies that $\mathcal{Z} \to C$ admits a section. 
Hence, by \cite[\href{https://stacks.math.columbia.edu/tag/06QG}{Lemma 06QG}]{stacks-project}, 
we see that $\mathcal{Z} \cong \rB \calg_\calp$,
where $\calg_\calp$ is the subgroup scheme of $\uAut(\cale)$ whose sections are morphisms $\alpha$ of $\cale$ such that $\pi_* \alpha = \id_{\calp}$. 
Since $\calp$ is \'etale locally isomorphic to $\bOne_\Gamma$,
we see that $\calg_\calp$ is \'etale locally isomorphic to $\calg^\circ$,
and hence $\calg_\calp$ is connected and reductive.
From diagram (\ref{stack.of.lifts.of.Pi}) we see that for any $k$-scheme $T$,
the groupoid $\rB \calg_\calp(C\times T)$ consists of the data of
a $\calg$-torsor $\calf$ over $C\times T$
together with an isomorphism $\alpha\colon \pr_1^{*}\calp \to \pi_{*}\calf$,
where $\pr_1\colon C\times T\to C$ is the projection.
Hence, we see that
\begin{equation*}\label{main.diagram.bung.decomp}
    \bun_{\calg_{\calp},C} \cong \res_{C/k}(\rB \calg_\calp) \cong \Spec k \times_{\calp,\bun_{\Gamma,C},\pi_{*}} \bun_{\calg,C}.
\end{equation*}
Since the morphism $\bun_{\calg_{\calp},C} \to \bun_{\calg,C}$
is identified with the forgetful morphism $(\calf, \alpha) \mapsto \calf$, we see that it factors through $\Bun_\calp$. That is, we have a commutative diagram 
\begin{equation*}
    \begin{tikzcd}
        \bun_{\calg_{\calp},C} \arrow[r, "\psi_{\calp}"] \arrow[d] & \bun_{\calp} \arrow[r] \arrow[d] & \bun_{\cG,C} \arrow[d, "\pi_{*}"] \\
        \Spec k \arrow[r] & \rB \Aut(\calp) \arrow[r, "\calp"] & \bun_{\Gamma,C}.
    \end{tikzcd}
\end{equation*}
Since the right and outer squares are cartesian, so is the left. Since $\Gamma$ is finite, so is $\Aut(\calp)$, and since $\Spec k \to \rB \Aut(\calp)$ is an $\Aut(\calp)$-torsor, so is $\psi_{\calp}$. 
\end{proof}
%
%
%
\begin{theorem}\label{bung.decomp}
Let $\calg$ be a reductive group scheme over $C$. If $\Char(k)>0$ assume furthermore that $\Gamma$ is finite and that $\Char(k)$ and $|\Gamma|$ are coprime. Then there exist a finite decomposition $\bun_{\calg} \cong \coprod_{\calp}\bun_{\calp}$ by open and closed substacks such that each $\bun_{\calp}$ admits a torsor $\bun_{\calg_\calp} \to \bun_{\calp}$ under a finite group, where $\calg_{\calp}$ is a connected reductive group scheme over $C$.
\end{theorem}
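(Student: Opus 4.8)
The plan is to glue together \Cref{proposition:bun-Gamma-discrete} and \Cref{bunp.admits.finite.torsor} along the morphism $\pi_*\colon \bun_{\calg,C}\to \bun_{\Gamma,C}$ induced by $\pi\colon \calg\to\Gamma$, where $\Gamma = \calg/\calg^\circ$. Recall from \cite[Proposition 3.1.3]{conrad.reductive.group.schemes} that $\Gamma$ is a separated, \'etale, quasi-finite $C$-group; together with the standing hypothesis (in positive characteristic, that $\Gamma$ is finite with order prime to $\Char k$) this means $\Gamma$ satisfies the assumptions of \Cref{proposition:bun-Gamma-discrete}. Applying that proposition yields a finite decomposition $\bun_{\Gamma,C}\cong\coprod_{[\calp]\in\rH^1_{\et}(C,\Gamma)}\rB\Aut(\calp)$ into open and closed substacks.

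First I would pull this decomposition back along $\pi_*$. Since each inclusion $\rB\Aut(\calp)\hookrightarrow\bun_{\Gamma,C}$ is an open and closed immersion and these substacks cover $\bun_{\Gamma,C}$ disjointly, their preimages $\bun_\calp$ (as defined in \Cref{notation:BunP}) are open and closed in $\bun_{\calg,C}$ and cover it disjointly; hence $\bun_{\calg,C}\cong\coprod_{[\calp]\in\rH^1_{\et}(C,\Gamma)}\bun_\calp$, a finite decomposition by open and closed substacks. Next I would note that, by the cartesian square of \Cref{notation:BunP}, a $k$-point of $\bun_\calp$ amounts to a $\calg$-torsor $\cale$ on $C$ together with an isomorphism $\pi_*\cale\cong\calp$; thus $\bun_\calp$ is nonempty exactly when $\calp$ admits a lift to a $\calg$-torsor. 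Discarding the empty terms does not change the coproduct, so we may index it by those $[\calp]$ admitting such a lift. For each of these, \Cref{bunp.admits.finite.torsor} supplies a connected reductive group scheme $\calg_\calp$ over $C$ together with an $\Aut(\calp)$-torsor $\psi_\calp\colon\bun_{\calg_\calp}\to\bun_\calp$; since $\Gamma$ is finite, $\Aut(\calp)$ is a finite group, so this is a torsor under a finite group, which is precisely the assertion.

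The main obstacle is essentially nonexistent once \Cref{proposition:bun-Gamma-discrete} and \Cref{bunp.admits.finite.torsor} are in hand: the only points needing care are the formal verifications that taking $\pi_*$-preimages of a disjoint open--closed cover gives a disjoint open--closed cover, and that the stack $\bun_\calp$ appearing in \Cref{notation:BunP} is genuinely the target of the torsor produced inside the proof of \Cref{bunp.admits.finite.torsor} --- the latter being built into the commutative diagram there, whose right and outer squares are cartesian and hence force the left one to be cartesian as well. Both checks are immediate, so all the real content of the theorem is already contained in the two cited results.
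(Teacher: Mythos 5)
Your proposal is correct and follows exactly the paper's route: pull back the decomposition of $\bun_{\Gamma}$ from \Cref{proposition:bun-Gamma-discrete} along $\pi_*$ and then invoke \Cref{bunp.admits.finite.torsor} on each piece. The paper's own proof is a two-line version of this; your extra remarks (discarding the empty $\bun_\calp$ for non-liftable $\calp$, and checking that preimages of a disjoint open--closed cover form one) are accurate clarifications of the same argument.
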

\begin{proof}
By \Cref{proposition:bun-Gamma-discrete} we have a finite decomposition
\begin{equation*}
    \bun_{\calg} \cong \coprod_{\rH^1_{\et}(C,\Gamma)} \bun_{\calp}.
\end{equation*}
By \Cref{bunp.admits.finite.torsor} the statement follows. 
\end{proof}
\begin{remark}
\Cref{bung.decomp} allows us to reduce any question about nonconnected groups to the connected case, as long as we have a good theory of descent along finite torsors. 
In \Cref{section:existence.of.moduli} we use it to construct a good moduli space for the semistable locus. Thus, we first prove that existence of projective good moduli spaces descend along finite torsors.
\end{remark}
\begin{notation}
From now on, we assume that $\Char(k)=0$.
\end{notation}
\begin{lemma}\label{gamma.descent.gms}
Let $A$ be a finite group, let $\calx$ and $\caly$ be quasi-compact algebraic stacks over $k$, locally of finite type with affine diagonal
, and let $\phi\colon \calx \to \caly$ be an $A$-torsor. If $\calx$ admits a separated (resp. proper, resp. projective) good moduli space, then so does $\caly$. 
\end{lemma}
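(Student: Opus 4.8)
The plan is to exploit that an $A$-torsor $\phi\colon\calx\to\caly$ is the same data as an action of $A$ on $\calx$ together with an identification $\caly\cong[\calx/A]$, and to push the good moduli space of $\calx$ through this presentation. First I would fix a good moduli space $\pi\colon\calx\to X$, so that $X$ is a separated algebraic space (resp.\ a proper algebraic space, resp.\ a projective scheme) of finite type over $k$. The $A$-action on $\calx$ descends to $X$: by uniqueness of good moduli spaces, each automorphism $\sigma_a$ of $\calx$ ($a\in A$) induces an automorphism $\overline\sigma_a$ of $X$ with $\pi\circ\sigma_a=\overline\sigma_a\circ\pi$, and the relations $\overline\sigma_{ab}=\overline\sigma_a\overline\sigma_b$ and $\overline\sigma_e=\id$ hold because they hold after precomposition with $\pi$. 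Thus $A$ acts on $X$ and $\pi$ is $A$-equivariant.

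Since $\Char(k)=0$, the finite group $A$ is linearly reductive, so the quotient $Y\coloneqq X/A$ exists as an algebraic space — a projective scheme if $X$ is one — and $[X/A]\to Y$ is a good moduli space. Let $\overline\pi\colon[\calx/A]\to[X/A]$ be the morphism obtained from $\pi$ by passing to $A$-quotients. As good moduli space morphisms are stable under composition (see \cite{alper.stacksandmoduli}), it suffices to prove that $\overline\pi$ is a good moduli space, for then $\caly\cong[\calx/A]\xrightarrow{\overline\pi}[X/A]\to Y$ realizes $Y$ as a good moduli space of $\caly$. This identification of the good moduli space of $\caly$ with $X/A$ is the heart of the matter. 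The square with vertices $\calx,X,[\calx/A],[X/A]$ whose vertical arrows are the $A$-torsor quotients and whose horizontal arrows are $\pi,\overline\pi$ is cartesian (the left vertical arrow and the pullback of the right one are both $A$-torsors over $[\calx/A]$, and $\pi$ induces a morphism between them, which is therefore an isomorphism), and $X\to[X/A]$ is finite \'etale, in particular faithfully flat and quasi-compact; hence $\calo_{[X/A]}\to(\overline\pi)_*\calo_{[\calx/A]}$ becomes, after pullback along $X\to[X/A]$, the isomorphism $\calo_X\to\pi_*\calo_\calx$ (flat base change), and so is itself an isomorphism.

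For cohomological affineness of $\overline\pi$, rather than invoking fppf descent of that property I would argue directly using that $A$ is linearly reductive. Write $h\colon\calx\to[\calx/A]$ for the $A$-torsor quotient; it is finite, and the Reynolds operator splits the inclusion $\calo_{[\calx/A]}\hookrightarrow h_*\calo_\calx$, so by the projection formula every quasi-coherent sheaf $\mathcal{F}$ on $[\calx/A]$ is a direct summand of $h_*h^*\mathcal{F}\cong\mathcal{F}\otimes h_*\calo_\calx$. Since $\overline\pi\circ h$ equals the composite $\calx\xrightarrow{\pi}X\to[X/A]$, which is cohomologically affine ($\pi$ is a good moduli space and $X\to[X/A]$ is finite), and since $h$ is affine, we get $R^{>0}(\overline\pi)_*(h_*h^*\mathcal{F})=R^{>0}(\overline\pi\circ h)_*(h^*\mathcal{F})=0$, hence $R^{>0}(\overline\pi)_*\mathcal{F}=0$. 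This completes the proof that $\overline\pi$, and therefore $\caly\to Y$, is a good moduli space.

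Finally I would transfer the relevant property of $X$ to $Y$ along the finite surjection $q\colon X\to Y$. If $X$ is separated, so is $Y$ (a finite group quotient of a separated algebraic space is separated). If $X$ is proper, then $Y$ is of finite type over $k$ — locally $q$ corresponds to an inclusion $R^A\subset R$ with $R$ a finitely generated $k$-algebra finite over $R^A$, so $R^A$ is finitely generated by the Artin--Tate lemma — and $Y\to\Spec k$ is universally closed (any closed subset of $Y\times_kT$ is the image under the surjective closed map $X\times_kT\to Y\times_kT$ of its preimage, whence its image in $T$ is closed), so $Y$ is proper. If $X$ is a projective scheme with ample line bundle $L$, then $N\coloneqq(\bigotimes_{a\in A}\overline\sigma_a^*L)^{\otimes|A|}$ is ample and admits an $A$-linearization whose induced character on every stabilizer is trivial (that character's order divides $|A|$), so $N$ descends to a line bundle on $Y$; this descended bundle is ample because $q$ is finite surjective and its pullback is $N$, and a proper algebraic space with an ample line bundle is a projective scheme, so $Y$ is projective. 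The step requiring the most care is the middle one, where good-moduli-space-ness is descended along the $A$-quotient $\overline\pi$ — the Reynolds-operator computation is what makes the cohomological affineness work; everything else is formal.
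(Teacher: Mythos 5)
Your proof is correct, but it reaches the existence statement by a genuinely different route than the paper. The paper does not construct the moduli space of $\caly$ by hand: it invokes the existence criterion of Alper--Halpern-Leistner--Heinloth, noting that $\Theta$-reductivity and S-completeness descend along the finite \'etale surjection $\phi$, which immediately yields a separated good moduli space $Y$ for $\caly$; the relation between $X$ and $Y$ is then only used afterwards (the induced surjection $X\to Y$ gives properness, and an averaged ample bundle is descended for projectivity). You instead identify $\caly\cong[\calx/A]$, descend the $A$-action to $X$ via the universal property (exactly as the paper does for its projectivity step), and exhibit $Y=X/A$ explicitly, verifying the good-moduli-space axioms for $[\calx/A]\to[X/A]$ directly: the cartesian square plus flat base change for the structure-sheaf isomorphism, and the Reynolds-operator/direct-summand trick for cohomological affineness. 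Your route is more constructive and self-contained relative to the valuative criteria of the AHLH machinery, and it identifies the moduli space concretely as $X/A$; the price is that you lean on a different stack of standard inputs -- Keel--Mori (or tame-stack theory) for the existence and separatedness of $X/A$, the fact that $[X/A]\to X/A$ is a good moduli space in characteristic zero, and stability of good moduli space morphisms under composition -- all of which hold under the stated hypotheses. The properness and projectivity arguments are essentially parallel to the paper's: both average the ample bundle over $A$, kill the stabilizer characters by raising to the power $|A|$, descend by the triviality-of-stabilizer-action criterion, and conclude ampleness from finiteness of $X\to Y$. Two small points you assert without proof but which are indeed standard: that the canonical reindexing isomorphisms make $\bigotimes_{a}\overline\sigma_a^*L$ genuinely $A$-linearized (an invariant bundle need not be linearizable in general, so the norm construction is doing real work here), and that composition of cohomologically affine morphisms is permissible in your setting (it is, since all stacks involved have affine diagonal).
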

\begin{proof}
Suppose that $\calx$ admits a separated good moduli space $X$. Then $\calx$ is both $\Theta$-reductive and S-complete by \cite[Theorem A]{AHLH}. Since $\phi\colon \calx\to \caly$ is an $A$-torsor, it is in particular finite, \'etale, and surjective. Thus, by \cite[Propositions 3.19(2) and 3.43(2)]{AHLH} the stack $\caly$ is also $\Theta$-reductive and S-complete, and so by \cite[Theorem A]{AHLH}, it admits a separated good moduli space $Y$. 

If $X$ is proper, then since the surjection $\phi\colon \calx\to \caly$ induces a surjection $X \to Y$, we see that $Y$ is proper as well.

Suppose now that $X$ is projective and let $L$ be an ample line bundle on it. By the universal property of $X$ \cite[Theorem 6.6]{alper-gms}, the action of $A$ on $\calx$ descends to $X$. We can replace $L$ by $\otimes_{\gamma \in A} (\gamma^* L)$ and assume that $L$ is $A$-invariant. 
Consider the commutative diagram
\begin{equation*}
    \begin{tikzcd}
        \calx \arrow[r, "\phi"] \arrow[d, "f"] & \caly \arrow[d, "g"] \\
        X \arrow[r, "\pi"] & Y.
    \end{tikzcd}
\end{equation*}
Since $f^*L$ is invariant under the action of $A$, it descends to a line bundle $M$ on $\caly$.

We now show that $M^{\otimes n}$ descends to $Y$, where $n \coloneqq |A|$. From the Cartesian diagram
\begin{equation*}
    \begin{tikzcd}
        \calx \arrow[r] \arrow[d, "\phi"] & \Spec k \arrow[d] \\
        \caly \arrow[r] & \rB A
    \end{tikzcd}
\end{equation*}
we see that for any $k$-points $y \in \caly$ and $x \in \calx$ such that $\phi(x) = y$, the index of $\phi(\Aut(x))$ in $\Aut(y)$ is at most $n$. Now $\Aut(x)$ acts trivially on the fiber of $f^*L$ at $x$, and so we see that $\phi(\Aut(x))$ is contained in the kernel of the action of $\Aut(y)$ on the fiber of $M$ at $y$, hence $\Aut(y)$ acts trivially on $M^{\otimes n}$. Thus by \cite[Theorem 10.3]{alper-gms}, $M^{\otimes n}$ descends to a line bundle $N$ on $Y$. Now 
\[ f^* \pi^* N \cong \phi^* g^* N \cong f^* L^{\otimes n}, \]
so it follows that $\pi^* N \cong L^{\otimes n}$.
Since $\phi$ is affine, so is $\pi$, and since $\pi$ is also proper, it is finite, and so $N$ is ample on $Y$ by \cite[\href{https://stacks.math.columbia.edu/tag/0GFB}{Lemma 0GFB}]{stacks-project}. 
\end{proof}
%
%
%
%
%
%
%
%
%
%
%
%
%
%
%
%
%
\subsection{Semistability}\label{section:semistability}
In \cite{raman1, raman2} Ramanathan defined a notion of semistability for $G$-bundles under a  connected reductive group $G$ over $k$. When $G = \GL_n$, this notion agrees with $\mu$-semistability when $\GL_n$-bundles are viewed as vector bundles of rank $n$ (see \cite[Corollary 1]{hyeon-murphy} for a proof), and Ramanathan proved that a $G$-bundle $E$ is semistable if and only if $\Ad_{*}E$ is semistable (see \cite[Corollary 3.18]{raman1}). 

Let now $\calg$ be a connected reductive group scheme over $C$. In \cite{behrend} Behrend introduced the following notion of semistability for $\calg$-bundles, which reduces to Ramanathan's notion in the case of a constant group scheme. A $\calg$-bundle $\cale$ is \textit{semistable} if for each parabolic subgroup $\calp \subset \uAut(\cale)$, we have that
\begin{equation*}
    \deg \text{Lie}(\calp) \leq 0.
\end{equation*}
\begin{lemma}\label{semistability.via.adjoint.rep}
If $\calg$ is a connected reductive group over $C$, then a $\calg$–bundle $\cale$ is semistable if and only if $\Ad_*\cale$ is semistable.
\end{lemma}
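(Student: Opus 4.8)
The plan is to deduce the statement for a general connected reductive group scheme $\calg$ over $C$ from the known case of a constant group, by working étale-locally on $C$ and reducing the structure group appropriately. Here is the outline.

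First I would record the two implications we actually need to link: on one hand, Behrend's condition for $\cale$ (no parabolic $\calp\subset\uAut(\cale)$ with $\deg\Lie(\calp)>0$) should be compared with Behrend's condition for the $\GL(\Lie\calg)$-bundle $\Ad_*\cale$, which by the constant-group case (and the remark that Behrend's notion reduces to Ramanathan's, together with Ramanathan's $\Ad$-criterion recalled just above) is equivalent to $\mu$-semistability of the vector bundle $\Ad_*\cale$. So it suffices to show: $\cale$ has no destabilizing parabolic reduction if and only if $\Ad_*\cale\in\bun_{\GL(\Lie\calg)}$ has none. For the ``only if'' direction, given a parabolic $\calq\subset\uAut(\Ad_*\cale)$ with $\deg\Lie(\calq)>0$ one wants to produce a parabolic of $\uAut(\cale)$; for the converse, a parabolic $\calp\subset\uAut(\cale)$ induces, via $\Ad$, a parabolic subgroup scheme of $\uAut(\Ad_*\cale)$, and one computes $\deg\Lie$ of the image. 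The key point is that for a connected reductive group, a parabolic subgroup $P\subset H$ maps under the adjoint representation $\ad\colon H\to\GL(\fkh)$ to a parabolic of $\GL(\fkh)$ whose Lie algebra is the parabolic subalgebra $\fkp\subset\fkh$ stabilizing the flag $\fkp\supset[\fkp,\fkp]\supset\cdots$, and by the usual weight computation $\deg$ of this is a positive multiple of $\deg\Lie(\calp)$ (up to the contribution of $\fkh/\fkp$, which pairs oppositely); this is exactly the content of Behrend's original argument in the constant case, and it globalizes because $\uAut(\cale)$ is a connected reductive group scheme over $C$ (étale-twisted form of $\calg$) and $\uAut(\Ad_*\cale)\cong\GL(\Ad_*\cale)$.

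Concretely, the cleanest route: since $\calg$ is smooth with connected reductive fibers, $\uAut(\cale)$ is again a connected reductive group scheme over $C$, and $\Ad_*\cale = \Lie(\uAut(\cale))$ as a vector bundle with $\uAut(\cale)$-action via $\Ad$. Reductions of $\cale$ to parabolics of $\calg$ correspond to parabolic subgroup schemes $\calp\subset\uAut(\cale)$, and for each such $\calp$ there is a canonical filtration of $\Ad_*\cale$ by $\calp$-subbundles (powers of the nilradical plus $\Lie(\calp)$ itself) whose associated graded is governed by the roots; Behrend's inequality $\deg\Lie(\calp)\le 0$ is then literally a statement about slopes of this filtration. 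So I would (i) invoke the constant-group equivalence to reduce ``$\Ad_*\cale$ semistable'' to ``no parabolic $\calq\subset\GL(\Ad_*\cale)$ has $\deg\Lie(\calq)>0$'', i.e. to $\mu$-semistability of $\Ad_*\cale$; (ii) show a destabilizing parabolic reduction of $\cale$ yields a destabilizing sub-bundle of $\Ad_*\cale$ by the root computation; (iii) for the converse, show that if $\cale$ is semistable in Behrend's sense then $\Ad_*\cale$ is semistable as a vector bundle — here one uses that the canonical (Harder–Narasimhan or Behrend's canonical) reduction of $\cale$ to a parabolic, which exists by Behrend's work and whose formation commutes with the relevant base changes, controls all sub-bundles of $\Ad_*\cale$, so a destabilizing sub-bundle of $\Ad_*\cale$ forces a destabilizing parabolic reduction of $\cale$.

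\textbf{Main obstacle.} The delicate step is (iii), the direction ``$\cale$ semistable $\Rightarrow$ $\Ad_*\cale$ semistable'', because one must rule out destabilizing sub-bundles of $\Ad_*\cale$ that do not a priori come from a parabolic reduction of the structure group; in the constant case this is handled by Ramanathan via the canonical reduction, and one needs the relative version — Behrend's canonical reduction for $\calg$-bundles over $C$ — to behave well enough (uniqueness, and the fact that its instability parabolic is the one detecting all of $\Ad$-instability). I expect the argument to go through verbatim from Behrend's and Ramanathan's treatments once we note that $\uAut(\cale)$ is a connected reductive $C$-group scheme and that all constructions (parabolic subgroup schemes, $\Lie$, $\Ad$, degrees) are compatible with the étale-local trivialization of $\calg$; the only real work is checking that the positivity/weight bookkeeping in the root decomposition of $\Lie(\calp)$ is unaffected by the twisting, which it is because degrees are computed on the fixed curve $C$ and the roots/coweights are locally constant.
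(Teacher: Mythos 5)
Your opening sentence --- deduce the general case from the constant-group case by working étale-locally --- is the right instinct and is indeed how the paper proceeds, but the way you then execute the reduction diverges substantially and leaves the essential content unproved. The paper's proof never touches parabolics of $\uAut(\Ad_*\cale)$ or any root-theoretic bookkeeping. It makes two clean reductions: (a) if $\calg$ is an \emph{inner} form of a constant group $G$, say $\calg\cong\uAut(E)$ for a $G$-bundle $E$, then $\cale\mapsto\cale\times^{\calg}E$ is an equivalence of categories that identifies $\uAut(\cale)$ with $\uAut(\cale\times^{\calg}E)$ (so Behrend's parabolic condition matches on the nose) and identifies $\Ad_*\cale$ with $\Ad_*(\cale\times^{\calg}E)$ (since $\Lie(\calg)\cong E\times^{G}\Lie(G)$), so the claim becomes literally Ramanathan's adjoint criterion \cite[Corollary 3.18]{raman1} for the constant group $G$; (b) in general, a finite étale cover $f\colon D\to C$ makes $f^*\calg$ an inner form of a constant group, and one concludes because semistability of $\calg$-bundles can be checked after pullback along $f$ by \cite[Corollary 7.4]{behrend}, likewise for $\mu$-semistability of vector bundles, and $\Ad_*f^*\cale\cong f^*\Ad_*\cale$.

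By contrast, your steps (ii) and (iii) amount to re-proving Ramanathan's criterion in the relative, twisted setting. Step (iii) in particular --- ruling out destabilizing subbundles of $\Ad_*\cale$ that do not arise from a parabolic reduction of $\cale$ --- is precisely the hard direction of Ramanathan's theorem; it is not a routine verification that ``goes through verbatim,'' and your proposal offers no argument for it beyond an appeal to the canonical reduction. As written this is a genuine gap. The remedy is not to strengthen the weight computation but to notice that the inner-form equivalence in (a) untwists everything, so that Ramanathan's theorem can simply be cited for the constant group rather than re-derived over $C$.
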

\begin{proof}
Assume first that $\calg$ is an inner form of a constant connected reductive group $G$. That is, there is a $G$-bundle $E$ such that $\calg \cong \uAut(E)$. The functor $\cale \mapsto \cale \times^{\calg}E$ is then an equivalence of categories between $\calg$-bundles on $C$ and $G$-bundles on $C$. Under this equivalence semistable $\calg$-bundles correspond to semistable $G$-bundles. Indeed, if $\cale$ is a $\calg$-bundle, then we have that
\begin{equation*}
\uAut(\cale)\cong\cale\times^{\calg} \calg \cong \cale \times^{\calg} (E\times^{G} G) \cong (\cale \times^{\calg} E)\times^{G} G \cong \uAut(\cale\times^{\calg} E).
\end{equation*}
Furthermore, since $\Lie(\calg) \cong E\times^G \Lie(G)$ we see that for any $\calg$-bundle $\cale$ we have that
\begin{equation*}
  \Ad_*(\cale)\cong \cale\times^\calg \Lie(\calg) \cong \cale \times^{\calg} E \times^{G} \Lie(G) \cong \Ad_{*}(\cale \times^{G} E).  
\end{equation*}
Since $\cale$ is semistable if and only if $\cale \times^{G} E$ is semistable, the statement follows from the corresponding statement for a constant group.

In the general case, by \cite[Section 4.2]{behrend}, there is a finite \'etale cover $f\colon D\to C$ such that $f^*\calg$ is an inner form of a constant group. By \cite[Corollary 7.4]{behrend} any $\calg$-bundle $\cale$ is semistable if and only if $f^*\cale$ is semistable, and since $\Ad_*f^*\cale \cong f^*\Ad_*\cale$ the statement thus follows.
\end{proof}
We thus make the following definition of semistability for $\calg$–bundles to include the nonconnected case.
\begin{definition}
Let $\calg$ be a reductive group scheme over $C$. A $\calg$-bundle $\cale$ is \textit{semistable} if $\Ad_{*}\cale$ is semistable in the sense above.
\end{definition}
From the previous discussions we immediately obtain the following statements.
\begin{lemma}\label{ss.iff.ss.on.finite.cover}
Let $\calg$ be a reductive group scheme over $C$ and let $\cale$ be a $\calg$-bundle. The following are equivalent.
\begin{enumerate}
    \item The bundle $\cale$ is semistable;
    \item for any finite cover $f\colon D\to C$, $f^*\cale$ is semistable; and
    \item there exists a finite cover $f\colon D\to C$ such that $f^*\cale$ is semistable.
\end{enumerate}
\end{lemma}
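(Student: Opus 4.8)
The plan is to reduce the statement to the corresponding fact about the adjoint vector bundle, which is exactly what \Cref{semistability.via.adjoint.rep} and the definition of semistability for nonconnected $\calg$ give us. By definition, a $\calg$-bundle $\cale$ is semistable if and only if $\Ad_*\cale$ is a semistable vector bundle on $C$. For a finite cover $f\colon D\to C$ we have the natural isomorphism $f^*\Ad_*\cale \cong \Ad_* f^*\cale$ of vector bundles on $D$ (recall $D$ is again a smooth projective curve, being finite over $C$, and the pullback group scheme $f^*\calg$ is again reductive with $(f^*\calg)^\circ = f^*(\calg^\circ)$, so the adjoint bundle computed downstairs or upstairs agrees). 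Hence $f^*\cale$ is semistable as an $f^*\calg$-bundle if and only if $f^*(\Ad_*\cale)$ is a semistable vector bundle on $D$.

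So the whole lemma follows once we know the analogous three-way equivalence for an ordinary vector bundle $V$ on $C$: $V$ is semistable $\iff$ $f^*V$ is semistable for every finite cover $f\colon D\to C$ $\iff$ $f^*V$ is semistable for some finite cover. This is classical: the implication (2)$\Rightarrow$(3) is trivial (take $f=\id$), and (1)$\Rightarrow$(2) together with (3)$\Rightarrow$(1) is the statement that $\mu$-semistability of vector bundles is insensitive to finite pullback. For the convenience of the reader I would recall the argument: if $V$ is not semistable, its maximal destabilizing subsheaf $V'\subset V$ has $\mu(V')>\mu(V)$, and then $f^*V'\subset f^*V$ has $\mu(f^*V') = \deg(f)\,\mu(V') > \deg(f)\,\mu(V) = \mu(f^*V)$, so $f^*V$ is not semistable; this gives the contrapositive of (3)$\Rightarrow$(1) and, specialized appropriately, also (1)$\Rightarrow$(2). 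Conversely, if $f^*V$ is destabilized by some $W\subset f^*V$, then (assuming $f$ Galois, which one may always arrange by passing to a further cover, using that a subbundle of $f^*V$ stays destabilizing after further pullback) the $\gal(D/C)$-average of $W$, or equivalently descent along the Galois cover in characteristic $0$, produces a destabilizing subsheaf of $V$ over $C$; alternatively one cites the standard reference (e.g.\ \cite[Lemma 3.2.2]{huybrechts-lehn} or Ramanan--Ramanathan) that semistability descends along finite morphisms of smooth curves in characteristic $0$. Either way, (3)$\Rightarrow$(1) holds.

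Assembling: given (1) for $\cale$, i.e.\ $\Ad_*\cale$ semistable, the vector bundle statement gives $f^*\Ad_*\cale \cong \Ad_* f^*\cale$ semistable for all $f$, which is (2); (2)$\Rightarrow$(3) is immediate; and from (3), $\Ad_* f^*\cale \cong f^*\Ad_*\cale$ semistable on $D$ forces $\Ad_*\cale$ semistable on $C$ by descent of vector bundle semistability, which is (1). I would also note that this is essentially the content already invoked in the proof of \Cref{semistability.via.adjoint.rep} via \cite[Corollary 7.4]{behrend}, so one could alternatively phrase the proof as: combine the definition of semistability with \Cref{semistability.via.adjoint.rep}, the compatibility $f^*\Ad_* = \Ad_* f^*$, and the fact that semistability of vector bundles is stable under and detected by finite pullback.

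The main obstacle is purely expository rather than mathematical: making sure the characteristic-$0$ hypothesis is used where it is genuinely needed, namely in the descent direction (3)$\Rightarrow$(1) for vector bundles, where in positive characteristic one must be slightly more careful (Frobenius pullbacks, separability of the cover). Since the ambient running assumption at this point in the paper is $\Char(k)=0$ (see the notation block ``From now on, we assume that $\Char(k)=0$''), there is no real difficulty, but I would state explicitly that we use it. The other thing to double-check is that ``finite cover $f\colon D\to C$'' is meant in the sense relevant to Behrend's theory (finite, or finite étale) so that $D$ is smooth projective and the Lie-algebra/adjoint-bundle formation commutes with pullback; under the conventions of \Cref{section:semistability} this is automatic.
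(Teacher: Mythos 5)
Your proposal is correct and is essentially the paper's own proof: the paper likewise reduces to the $\GL(\Lie(\calg))$-bundle $\Ad_*\cale$ via the compatibility $\Ad_*f^*\cale \cong f^*\Ad_*\cale$ and then invokes the classical fact that $\mu$-semistability of vector bundles on smooth projective curves is preserved and detected by finite pullback in characteristic $0$; you merely spell out that classical fact in more detail. One small slip in your write-up: the two arguments you recall are attached to the wrong implications --- pulling back the maximal destabilizing subsheaf (``$V$ not semistable $\Rightarrow$ $f^*V$ not semistable'') is the contrapositive of (3)$\Rightarrow$(1), while the Galois-averaging/descent argument (``$f^*V$ not semistable $\Rightarrow$ $V$ not semistable'') proves (1)$\Rightarrow$(2), not (3)$\Rightarrow$(1) as you state; since both arguments are present and correct, the proof still goes through.
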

\begin{proof}
This is true applied to the $\GL(\text{Lie}(\calg))$-bundle $\Ad_*\cale$. Since $\Ad_* f^* \cale \cong f^*\Ad_* \cale$ we see that the statement follows.
\end{proof}
\begin{lemma}\label{g0.ss.iff.iotag.ss}
Let $\iota\colon \calg^\circ \to \calg$ denote the inclusion. A $\calg^\circ$-bundle $\cale$ is semistable if and only $\iota_*\cale$ is a semistable $\calg$–bundle.
\end{lemma}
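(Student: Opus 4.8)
The plan is to reduce everything to the vector bundle $\Ad_*\cale$, using the definition of semistability for (possibly nonconnected) reductive group schemes together with the fact that $\calg^\circ$ is \emph{open} in $\calg$. First I would record the identity of adjoint bundles
\[
    \Ad_*(\iota_*\cale) \cong \Ad_*\cale
\]
of vector bundles on $C$, where on the left $\Ad$ denotes the adjoint representation of $\calg$ and on the right that of $\calg^\circ$. To get this, note that since $\iota\colon \calg^\circ\hookrightarrow\calg$ is an open immersion of group schemes over $C$, the induced map $\Lie(\calg^\circ)\to\Lie(\calg)$ is an isomorphism of vector bundles, and under it the adjoint action of $\calg$ on $\Lie(\calg)$ restricts along $\iota$ to the adjoint action of $\calg^\circ$ on $\Lie(\calg^\circ)$. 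Combining this with the standard associativity of the extension-of-structure-group construction, namely $(\cale\times^{\calg^\circ}\calg)\times^{\calg}V\cong \cale\times^{\calg^\circ}V$ for any $\calg$-representation $V$ viewed as a $\calg^\circ$-representation by restriction, and applying it to $V=\Lie(\calg)$, yields the displayed isomorphism.

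With that in hand the lemma is immediate: by \Cref{semistability.via.adjoint.rep} (and the definition extending it to the nonconnected case), $\cale$ is semistable as a $\calg^\circ$-bundle if and only if $\Ad_*\cale$ is a $\mu$-semistable vector bundle, and $\iota_*\cale$ is semistable as a $\calg$-bundle if and only if $\Ad_*(\iota_*\cale)$ is $\mu$-semistable; since these two adjoint bundles are isomorphic, the two conditions coincide.

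I expect there to be no genuine obstacle here — the statement is essentially bookkeeping. The only point requiring a little care is to make sure the adjoint \emph{action} (not merely the underlying bundle) matches up under $\Lie(\calg^\circ)\cong\Lie(\calg)$, which is where openness of $\calg^\circ$ in $\calg$ is used; note in particular that nothing about finiteness of $\calg/\calg^\circ$ enters, since the argument takes place entirely at the level of associated vector bundles. One could also phrase the whole thing via \Cref{ss.iff.ss.on.finite.cover} by passing to a cover where $\calg$ becomes a constant group, but the direct argument above is cleaner.
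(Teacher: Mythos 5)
Your proof is correct and follows the same route as the paper, whose entire proof is the one-line citation of \Cref{semistability.via.adjoint.rep}; you have simply made explicit the identification $\Ad_*(\iota_*\cale)\cong\Ad_*\cale$ coming from $\Lie(\calg^\circ)=\Lie(\calg)$ (via openness of $\calg^\circ$) and associativity of extension of structure group, which the paper leaves implicit. No issues.
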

\begin{proof}
This follows from \Cref{semistability.via.adjoint.rep}.
%
\end{proof}
We denote by $\bun_{\calg}^{\Ss}$ the substack of semistable $\calg$-bundles. Since it is the preimage of $\bun_{\GL(\text{Lie}(\calg))}^{\Ss}$ under $\Ad_{*}$, it is an open substack. For any $\Gamma$-torsor $\calp$ admitting a lift to $\calg$, we similarly let $\bun_{\calp}^{\Ss}$ denote the substack of $\bun_{\calp}$ consisting of semistable bundles (recall \cref{notation:BunP}).
%
%
%
%
%
%
%
\subsection{Existence of a projective good moduli space}\label{section:existence.of.moduli}
We recall the following theorem of Behrend.
\begin{lemma}\label{passing.to.cover.behrend}(\cite[Theorem 4.4.3]{behrend.thesis}
Let $f\colon D\to C$ be a projective flat cover of $C$, and let $\calg$ be an affine group scheme of finite type over $C$. The induced morphism 
\begin{equation*}
    f^* \colon \bun_{\calg,C}\to \bun_{f^*\calg,D}
\end{equation*}
is affine of finite presentation.
\end{lemma}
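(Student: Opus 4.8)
The plan is to reduce the statement to a single representability fact about restriction of scalars, which is where all the work lies. Since affineness and finite presentation of a morphism of algebraic stacks may be checked after base change along a smooth surjection from a scheme to the target, I would fix a $k$-scheme $S$ together with a morphism $S \to \bun_{f^*\calg,D}$ — equivalently, an $f_S^*\calg_S$-torsor $Q$ on $D_S := D\times_k S$, where I write $C_S := C\times_k S$, $f_S := f\times\id_S\colon D_S\to C_S$, and let $\calg_S$ and $f_S^*\calg_S$ denote the pullbacks of $\calg$ along $C_S\to C$ and $D_S\to C$ — and then show that $F := \bun_{\calg,C}\times_{\bun_{f^*\calg,D}} S$ is representable by an affine $S$-scheme of finite presentation. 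The main obstacle, isolated below, is the fact that Weil restriction along a proper flat morphism of finite presentation takes affine finitely presented morphisms to affine finitely presented morphisms (and is representable on them); this is precisely the content of \cite[Theorem 4.4.3]{behrend.thesis}.

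First I would unwind $F$. For $S'\to S$, an object of $F(S')$ is a $\calg_{S'}$-torsor $\cale$ on $C_{S'}$ together with an isomorphism $\iota\colon f_{S'}^*\cale\xrightarrow{\sim}Q_{S'}$. Since $f$ is flat, surjective and projective, $f_{S'}\colon D_{S'}\to C_{S'}$ is faithfully flat and quasi-compact, so (using that $\calg$ is affine) any automorphism of $\cale$ that becomes the identity after pulling back along $f_{S'}$ is already the identity; hence $F$ is a sheaf and $f^*$ is representable. By fppf descent along $f_{S'}$, giving $(\cale,\iota)$ amounts to giving a descent datum on $Q_{S'}$, i.e. an isomorphism $\theta\colon\pr_0^*Q_{S'}\xrightarrow{\sim}\pr_1^*Q_{S'}$ over $D_{S'}\times_{C_{S'}}D_{S'}$ satisfying the cocycle identity over the triple fibre product. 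Setting $P := D_S\times_{C_S}D_S$ and $\mathcal{I} := \underline{\mathrm{Isom}}_P(\pr_0^*Q,\pr_1^*Q)$ — a torsor over $P$ under the inner form $\underline{\Aut}(\pr_0^*Q)$ of $f_S^*\calg_S$, hence affine of finite presentation over $P$ — and using that Isom-schemes commute with base change, $F$ is exactly the subfunctor of the Weil restriction $\res_{P/S}(\mathcal{I})$ cut out by the cocycle identity. That identity is the equalizer of two $S$-morphisms from $\res_{P/S}(\mathcal{I})$ into $\res_{P^{(2)}/S}\big(\underline{\mathrm{Isom}}_{P^{(2)}}(\pr_0^*Q,\pr_2^*Q)\big)$ with $P^{(2)} := D_S\times_{C_S}D_S\times_{C_S}D_S$, and since that target is separated over $S$, $F$ is a closed subscheme of $\res_{P/S}(\mathcal{I})$. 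Finally, $P = (D\times_C D)\times_k S$ and $D\times_C D\to\Spec k$ is projective and flat, being a composite of the projective flat morphisms $D\times_C D\to C$ and $C\to\Spec k$; so $P\to S$ is projective, flat and of finite presentation.

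Granting the key fact, $\res_{P/S}(\mathcal{I})$ is affine of finite presentation over $S$, hence so is its closed subscheme $F$, which completes the reduction. For the key fact itself — the hard part — I would argue (or, since the present paper already cites \cite{behrend.thesis} for the whole lemma, simply quote) as follows: working Zariski-locally on $S$ one reduces, using that Weil restriction preserves closed immersions, to the case $W = \bbA^n_P$; and $\res_{P/S}(\bbA^1_P)$ is the linear $S$-scheme attached to the derived pushforward of $\mathcal{O}_P$ along $P\to S$, which is a perfect complex by properness and flatness, so this Weil restriction is affine and of finite type over $S$, with finite presentation propagated from the hypotheses by a standard limit argument. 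The genuine subtlety — globalizing over $S$ and handling the fact that an affine $P$-scheme need not globally embed in an affine space over $P$ — is what makes this step nontrivial, and it is exactly what \cite[Theorem 4.4.3]{behrend.thesis} takes care of.
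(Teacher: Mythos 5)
The paper does not actually prove this lemma: it is stated as a direct quotation of \cite[Theorem 4.4.3]{behrend.thesis} and used as a black box, so there is no in-text argument to compare against. Your proposal is a correct reconstruction of the standard proof underlying that reference: after base change to a scheme $S$ over $\bun_{f^*\calg,D}$, fppf descent along $f_S$ identifies the fiber product with the locus of cocycle-satisfying sections of the Isom-torsor over $D_S\times_{C_S}D_S$, i.e.\ a closed subscheme of a Weil restriction along the projective flat finitely presented morphism $(D\times_C D)\times_k S\to S$, and everything reduces to the fact that such Weil restrictions of affine finitely presented schemes are representable, affine and of finite presentation. The intermediate steps all check out: faithfulness of $f^*$ via injectivity of sections of the automorphism sheaf under an fppf cover, effectivity of descent because torsors under an affine group scheme are affine over the base, affineness and finite presentation of the Isom-scheme because it is a torsor under an inner form of $f^*\calg$, and closedness of the cocycle condition because the target Weil restriction is separated. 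One small imprecision in your sketch of the key fact: the reduction to $W=\mathbb{A}^n_P$ is not achieved by working Zariski-locally on $S$ --- one needs an ample line bundle on $P$ (available here since $P$ is projective over $S$) to present the finitely presented $\mathcal{O}_P$-algebra globally as a quotient of $\mathrm{Sym}$ of a sum of line bundles, and then cohomology and base change for the proper flat morphism $P\to S$. Since you explicitly delegate exactly this step to Behrend's theorem, this does not affect the correctness of your reduction.
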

We deduce immediately the following.
\begin{proposition}\label{pullback.is.affine.on.moduli.space}
Let $\calg$ be an affine smooth group scheme of finite type over $C$ and suppose that there is a notion of semistability on $\bun_{\calg}$ for which the substack of semistable bundles admits a proper good moduli space $M_{\calg,C}^{\Ss}$. For any finite cover $f\colon D\to C$ such that a $\calg$–bundle $\cale$ is semistable if and only if $f^*\cale$ is semistable, $f$ induces via pullback a finite morphism $f^*\colon M_{\calg,C}^{\Ss}\to M_{f^*\calg,D}^{\Ss}$.
\end{proposition}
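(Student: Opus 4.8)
The plan is to deduce the statement from Behrend's \Cref{passing.to.cover.behrend} together with the basic formalism of good moduli spaces, with no further geometry of $\bun_\calg$ required. Let $g\colon \bun_{\calg,C}^{\Ss}\to\bun_{f^*\calg,D}^{\Ss}$ be the morphism induced by pullback along $f$. The hypothesis that $\cale$ is semistable if and only if $f^*\cale$ is semistable says exactly that $\bun_{\calg,C}^{\Ss}$ is the preimage of the open substack $\bun_{f^*\calg,D}^{\Ss}\subset\bun_{f^*\calg,D}$ under $f^*\colon\bun_{\calg,C}\to\bun_{f^*\calg,D}$; hence $g$ is a base change of $f^*$ along an open immersion, and is therefore affine (and of finite presentation) by \Cref{passing.to.cover.behrend}. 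Writing $p_C\colon\bun_{\calg,C}^{\Ss}\to M_{\calg,C}^{\Ss}$ and $p_D\colon\bun_{f^*\calg,D}^{\Ss}\to M_{f^*\calg,D}^{\Ss}$ for the good moduli space morphisms, the universal property of good moduli spaces \cite[Theorem 6.6]{alper-gms} applied to $p_D\circ g$ yields a unique morphism $\bar f\colon M_{\calg,C}^{\Ss}\to M_{f^*\calg,D}^{\Ss}$ with $\bar f\circ p_C=p_D\circ g$. This $\bar f$ is the morphism ``$f^*$'' in the statement, and it remains to prove that it is finite, i.e.\ affine and proper.

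Properness is immediate: $M_{\calg,C}^{\Ss}$ is proper over $k$ by hypothesis and $M_{f^*\calg,D}^{\Ss}$ is separated, so $\bar f$ is proper. The substance is affineness, which I would check \'etale-locally on the target. Fix an affine \'etale chart $V=\Spec A\to M_{f^*\calg,D}^{\Ss}$. Good moduli space morphisms are cohomologically affine and good moduli spaces are stable under flat base change, so $p_D^{-1}(V)\to V$ is a good moduli space; in particular $p_D^{-1}(V)$ is cohomologically affine over $V=\Spec A$, hence cohomologically affine over $k$. Since $g$ is affine, $g^{-1}(p_D^{-1}(V))\to p_D^{-1}(V)$ is affine, so $g^{-1}(p_D^{-1}(V))$ is again cohomologically affine over $k$ (affine over cohomologically affine is cohomologically affine). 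Chasing the identity $\bar f\circ p_C=p_D\circ g$ gives $g^{-1}(p_D^{-1}(V))=p_C^{-1}\bigl(\bar f^{-1}(V)\bigr)$, and since $\bar f^{-1}(V)$ is \'etale over $M_{\calg,C}^{\Ss}$, flat base change again shows that $p_C^{-1}(\bar f^{-1}(V))\to\bar f^{-1}(V)$ is a good moduli space. But a cohomologically affine stack admitting a good moduli space has an affine one --- by uniqueness of good moduli spaces it is forced to be $\Spec$ of its ring of global functions --- so $\bar f^{-1}(V)$ is affine. Letting $V$ range over an \'etale cover of $M_{f^*\calg,D}^{\Ss}$, we conclude that $\bar f$ is affine, and hence finite.

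The only delicate ingredient is this affineness step, whose content is exactly the compatibility of good moduli space formation with affine morphisms: one uses that affine morphisms are cohomologically affine and that cohomological affineness is stable under composition, that good moduli spaces descend along flat (in particular \'etale) base change so that affineness may be tested locally on the target, and the characterisation of cohomologically affine stacks as precisely those whose good moduli space is affine --- all standard facts from \cite{alper-gms}. (Alternatively one may invoke directly the general statement that an affine morphism of stacks admitting good moduli spaces induces an affine morphism on good moduli spaces.) I anticipate no difficulty with the finite-type and finite-presentation bookkeeping, since every stack and algebraic space in sight is of finite type over $k$. One point worth flagging is that the statement tacitly presupposes that $M_{f^*\calg,D}^{\Ss}$ exists; in the intended applications this follows by applying the same construction over $D$, and it is then in particular separated, which is all that is used above.
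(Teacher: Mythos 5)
Your proof is correct and takes essentially the same route as the paper: identify $\bun_{\calg,C}^{\Ss}$ as the preimage of $\bun_{f^*\calg,D}^{\Ss}$ under $f^*$, invoke Behrend's theorem to get affineness of the induced map on semistable loci, descend affineness to the good moduli spaces, and conclude finiteness from properness of both spaces. The only difference is that you spell out in detail the step that an affine morphism of stacks induces an affine morphism on good moduli spaces (via cohomological affineness and \'etale-local verification on the target), which the paper asserts without proof as part of the standard formalism of \cite{alper-gms}.
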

\begin{proof}
By assumption we have that $\bun_{\calg,C}^{\Ss} = \bun_{\calg,C}\times_{\bun_{f^*\calg,D}}\bun_{f^*\calg,D}^{\Ss}$. Hence, by \Cref{passing.to.cover.behrend} $f^*$ induces an affine morphism between the semistable loci, and thus also induces an affine morphism between the good moduli spaces $f^*\colon M_{\calg,C}^{\Ss}\to M_{f^*\calg,D}^{\Ss}$. Since $M_{\calg,C}^{\Ss}$ and $M_{f^*\calg,D}^{\Ss}$ are proper, so is $f^*$. Hence, $f^*\colon M_{\calg,C}^{\Ss}\to M_{f^*\calg,D}^{\Ss}$ is finite.
\end{proof}
\begin{lemma}\label{ahlh.projective}
Let $\calg$ be a connected reductive group scheme over $C$. 
The stack $\bun_{\calg}^{\Ss}$ admits a projective good moduli space.
\end{lemma}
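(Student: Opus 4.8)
The plan is to reduce to the case of a constant connected reductive group $G$ over $k$, for which the result is the culmination of the works cited in the introduction (notably \cite{gomez.langer.schmitt.sols} in the positive characteristic case, or \cite{raman1,raman2,faltings} in characteristic $0$; in any case we are assuming $\Char(k)=0$ here). The reduction proceeds along a finite étale cover of $C$ using \Cref{pullback.is.affine.on.moduli.space}. First I would invoke \cite[Section 4.2]{behrend} (as in the proof of \Cref{semistability.via.adjoint.rep}) to produce a finite étale cover $f\colon D\to C$ such that $f^*\calg$ is an \emph{inner} form of a constant connected reductive group $G$, i.e. $f^*\calg\cong\uAut(E)$ for some $G$-bundle $E$ on $D$. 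By the equivalence of categories $\cale\mapsto \cale\times^{f^*\calg}E$ from $f^*\calg$-bundles on $D$ to $G$-bundles on $D$, which (as shown in the proof of \Cref{semistability.via.adjoint.rep}) carries semistable bundles to semistable bundles, we get an isomorphism of stacks $\bun_{f^*\calg,D}^{\Ss}\cong\bun_{G,D}^{\Ss}$. The latter admits a projective good moduli space by the classical theory, so $\bun_{f^*\calg,D}^{\Ss}$ admits a projective, in particular proper, good moduli space $M_{f^*\calg,D}^{\Ss}$.

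Next I would transfer properness back down to $C$. By \Cref{ss.iff.ss.on.finite.cover}, a $\calg$-bundle $\cale$ on $C$ is semistable if and only if $f^*\cale$ is, so the hypothesis of \Cref{pullback.is.affine.on.moduli.space} is satisfied. To apply that proposition I first need to know that $\bun_{\calg,C}^{\Ss}$ admits a proper good moduli space. This I would get from \cite[Theorem A]{AHLH}: the stack $\bun_{f^*\calg,D}^{\Ss}$ has a separated good moduli space, hence is $\Theta$-reductive and S-complete; since $f^*\colon \bun_{\calg,C}^{\Ss}\to\bun_{f^*\calg,D}^{\Ss}$ is affine (and in particular representable and separated) by \Cref{passing.to.cover.behrend}, the stack $\bun_{\calg,C}^{\Ss}$ is also $\Theta$-reductive and S-complete (using that these properties pass to source along affine, or more generally representable separated, morphisms — cf. \cite[Propositions 3.19, 3.43]{AHLH}), and it is of finite type with affine diagonal, so it admits a separated good moduli space $M_{\calg,C}^{\Ss}$. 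Properness of $M_{\calg,C}^{\Ss}$ then follows because by \Cref{pullback.is.affine.on.moduli.space} the induced map $M_{\calg,C}^{\Ss}\to M_{f^*\calg,D}^{\Ss}$ is finite and the target is proper.

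Finally I would upgrade properness to projectivity by descending an ample line bundle along the finite morphism $M_{\calg,C}^{\Ss}\to M_{f^*\calg,D}^{\Ss}$. Since this morphism is finite and the target is projective, pulling back an ample line bundle gives an ample line bundle on $M_{\calg,C}^{\Ss}$ by \cite[\href{https://stacks.math.columbia.edu/tag/0GFB}{Lemma 0GFB}]{stacks-project} (a finite morphism is affine, and the pullback of an ample bundle along an affine morphism to a scheme with an ample bundle is ample). Hence $M_{\calg,C}^{\Ss}$ is projective. The main obstacle I anticipate is making sure that $\Theta$-reductivity and S-completeness genuinely descend along the affine morphism $f^*$ in the direction we need (source from target) and that $\bun_{\calg,C}^{\Ss}$ satisfies the remaining hypotheses of \cite[Theorem A]{AHLH} (finite type over $k$, affine — or at least separated — diagonal); these are not automatic and require a small argument, but should follow from the representability and separatedness of $f^*$ together with the corresponding properties of $\bun_{f^*\calg,D}^{\Ss}$, which hold since $\bun_{G,D}$ is of finite type with affine diagonal.
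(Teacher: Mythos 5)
Your overall strategy coincides with the paper's: pass to a finite \'etale cover $f\colon D\to C$ on which $\calg$ becomes an inner form of a constant group, identify $\bun_{f^*\calg,D}^{\Ss}$ with $\bun_{G,D}^{\Ss}$ whose moduli space is projective, and transport projectivity back to $C$ via the finite morphism $M_{\calg,C}^{\Ss}\to M_{f^*\calg,D}^{\Ss}$ of \Cref{pullback.is.affine.on.moduli.space} by pulling back an ample line bundle. That last step is fine, as is your derivation of a \emph{separated} good moduli space for $\bun_{\calg,C}^{\Ss}$ by transferring $\Theta$-reductivity and S-completeness along the affine morphism $f^*$.

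The gap is in the properness step, and it is circular as written. \Cref{pullback.is.affine.on.moduli.space} takes properness of $M_{\calg,C}^{\Ss}$ as a \emph{hypothesis}: its proof deduces finiteness of $f^*$ from the fact that an affine morphism between two proper spaces is proper, hence finite. You cannot therefore use that proposition to conclude properness of $M_{\calg,C}^{\Ss}$; knowing only that $M_{\calg,C}^{\Ss}$ is separated and maps affinely to a proper target does not make it proper (an affine morphism to a proper space is proper only if it is already finite). More structurally, \cite[Theorem A]{AHLH} gives a \emph{separated} good moduli space from $\Theta$-reductivity and S-completeness; properness requires in addition the existence part of the valuative criterion (i.e.\ semistable reduction), and this condition does \emph{not} transfer from target to source along affine morphisms -- given an $R$-point of $\bun_{f^*\calg,D}^{\Ss}$ extending the image of a $K$-point of $\bun_{\calg,C}^{\Ss}$, there is no reason the affine $R$-scheme of lifts has an $R$-point. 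This is exactly the hard input the paper imports by citing \cite[Section~8]{AHLH}, where properness of the good moduli space of $\bun_{\calg}^{\Ss}$ for Bruhat--Tits (in particular connected reductive) group schemes over $C$ is established via a Langton-type semistable reduction argument. With that citation in place of your bootstrap, the rest of your argument goes through and agrees with the paper's proof.
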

\begin{proof}
By \cite[Section 8]{AHLH} $\bun_{\calg}^{\Ss}$ admits a proper good moduli space $M_{\calg}^{\Ss}$. Let $f\colon D\to C$ be a finite \'etale cover such that $f^*\calg$ is an inner form of a constant group scheme $G$. By  \Cref{pullback.is.affine.on.moduli.space} we obtain a finite morphism 
\begin{equation*}
    f^* \colon M_{\calg,C}^{\Ss}\to M_{f^*\calg,D}^{\Ss}
\end{equation*}
between proper good moduli spaces. Since $f^*\calg$ is an inner form of $G$, $M_{f^*\calg,D}^{\Ss}\cong M_{G,D}^{\Ss}$, which is projective (\cite{raman1, raman2}). Since $f^*$ is finite this implies that $M_{\calg,C}^{\Ss}$ is projective.
%
\end{proof}
%
%
%
%
%
%
%
%
%
%
\begin{theorem}\label{main.theorem}
Let $\calg$ be a (not necessarily connected) reductive group scheme over $C$. 
The semistable locus $\Bun_{\calg}^\Ss$ admits a projective good moduli space.
\end{theorem}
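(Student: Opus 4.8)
The plan is to deduce the general case from the connected case, \Cref{ahlh.projective}, by combining the decomposition of \Cref{bung.decomp} with the descent statement \Cref{gamma.descent.gms}.

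First I would reduce to a single piece of the decomposition. By \Cref{bung.decomp} there is a finite decomposition $\bun_\calg \cong \coprod_\calp \bun_\calp$ into open and closed substacks, indexed by the finitely many classes $[\calp] \in \rH^1_\et(C,\Gamma)$ that lift to $\calg$. Since $\bun_\calg^\Ss$ is open in $\bun_\calg$ and $\bun_\calp^\Ss = \bun_\calp \cap \bun_\calg^\Ss$ by definition, this restricts to a finite decomposition $\bun_\calg^\Ss \cong \coprod_\calp \bun_\calp^\Ss$ by open and closed substacks. A good moduli space for $\bun_\calg^\Ss$ is then the disjoint union of good moduli spaces for the $\bun_\calp^\Ss$, and a finite disjoint union of projective $k$-schemes is projective; so it suffices to produce a projective good moduli space for each $\bun_\calp^\Ss$.

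Fix $\calp$. By \Cref{bunp.admits.finite.torsor} there is a connected reductive group scheme $\calg_\calp$ over $C$ and a torsor $\psi_\calp \colon \bun_{\calg_\calp} \to \bun_\calp$ under the finite group $\Aut(\calp)$, and by \Cref{ahlh.projective} the stack $\bun_{\calg_\calp}^\Ss$ admits a projective good moduli space. The key point is that $\psi_\calp$ identifies the \emph{intrinsic} semistable locus $\bun_{\calg_\calp}^\Ss$ (defined via $\Ad_*$ of $\calg_\calp$) with the preimage $\psi_\calp^{-1}(\bun_\calp^\Ss)$. Granting this, base change along the open immersion $\bun_\calp^\Ss \hookrightarrow \bun_\calp$ shows $\psi_\calp$ restricts to an $\Aut(\calp)$-torsor $\bun_{\calg_\calp}^\Ss \to \bun_\calp^\Ss$; since $\bun_{\calg_\calp}^\Ss$ is quasi-compact with affine diagonal (standard for semistable loci of reductive group schemes over a projective curve, e.g.\ by \cite[Section 8]{AHLH}) and carries a projective, in particular separated, good moduli space, \Cref{gamma.descent.gms} with $A = \Aut(\calp)$ produces a projective good moduli space for $\bun_\calp^\Ss$. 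To check the key point, recall from the proof of \Cref{bunp.admits.finite.torsor} that the composite $\bun_{\calg_\calp} \xrightarrow{\psi_\calp} \bun_\calp \hookrightarrow \bun_\calg$ is an associated-bundle morphism whose formation is \'etale-local on $C$: over a finite \'etale cover $f\colon D\to C$ trivializing $\calp$ one has $f^*\calg_\calp \cong (f^*\calg)^\circ$ compatibly with the map to $f^*\calg$, which over $D$ becomes $\iota_* \colon \bun_{(f^*\calg)^\circ,D} \to \bun_{f^*\calg,D}$. For a $\calg_\calp$-bundle $\calh$ with image $\calh'$ in $\bun_\calg$, using $f^*\Ad_*\calh \cong \Ad_* f^*\calh$ together with \Cref{semistability.via.adjoint.rep}, \Cref{g0.ss.iff.iotag.ss} and \Cref{ss.iff.ss.on.finite.cover} gives the equivalences $\calh \in \bun_{\calg_\calp}^\Ss \iff f^*\calh$ semistable as $(f^*\calg)^\circ$-bundle $\iff \iota_* f^*\calh = f^*\calh'$ semistable as $f^*\calg$-bundle $\iff \calh' \in \bun_\calp^\Ss$, which is the asserted identification.

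The main obstacle is precisely this comparison of the two a priori different notions of semistability — the one intrinsic to $\calg_\calp$ and the one inherited from $\calg$ via $\bun_\calp \hookrightarrow \bun_\calg$. Everything else is formal: the decomposition and \Cref{gamma.descent.gms} carry the load, while reducing the comparison to the trivial $\Gamma$-torsor (where $\calg_\calp = \calg^\circ$ and the associated-bundle map is literally $\iota_*$) makes it accessible via the semistability lemmas already established in \Cref{section:semistability}. The only remaining bookkeeping is to confirm that $\bun_{\calg_\calp}^\Ss$, and hence (being the image of $\psi_\calp$) $\bun_\calp^\Ss$, is quasi-compact, and that $\bun_\calg$ has affine diagonal so that open substacks do too, so that the hypotheses of \Cref{gamma.descent.gms} are genuinely met.
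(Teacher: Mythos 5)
Your proposal is correct and follows essentially the same route as the paper: decompose via \Cref{bung.decomp}, identify the intrinsic semistable locus of $\bun_{\calg_\calp}$ with $\psi_\calp^{-1}(\bun_\calp^{\Ss})$ by pulling back to a trivializing finite cover and chaining \Cref{ss.iff.ss.on.finite.cover} with \Cref{g0.ss.iff.iotag.ss}, then descend the projective good moduli space of \Cref{ahlh.projective} along the $\Aut(\calp)$-torsor using \Cref{gamma.descent.gms}. The only point the paper treats more carefully is the existence of the finite cover $f\colon D\to C$ trivializing $\calp$ (since $\Gamma$ is only quasi-finite \'etale, one first trivializes generically and invokes that generically trivial torsors are trivial), but this does not change the argument.
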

\begin{proof}
By \Cref{bung.decomp} it suffices to prove that for any $\Gamma$-torsor $\calp$ admitting a lift to a $\calg$-torsor, the semistable locus $\bun_\calp^{\Ss}$ admits a projective good moduli space. To this end, let $\calp$ be a $\Gamma$-torsor  admitting a lift to a $\calg$-torsor. Since $\rH_{\et}^1(C,\Gamma)$ is finite (cf. proof of \Cref{proposition:bun-Gamma-discrete}), there exists a quasi-finite \'etale cover $C'\to C$ trivialising all $\Gamma$-torsors. By restricting to an irreducible component we may assume that $C'$ is irreducible. Let $f\colon D\to C$ be the finite cover of $C$ corresponding to the field extension $k(C')/k(C)$. Then $f$ factors through a rational morphism $D\dasharrow C' \to C$. Hence, for any $\Gamma$-bundle $\calp$ over $C$, $f^*\calp$ is generically trivial, whence trivial (by \cite[Proposition 3.4]{antei.emsalem.gasbarri}). 
The short exact sequence
\begin{equation*}
    1\to \calg^\circ \to \calg \to \Gamma \to 1
\end{equation*}
induces a long exact sequence of pointed sets
\begin{equation*}
    \cdots \to \rH_{\et}^1(C,\calg^\circ) \to \rH_{\et}^1(C,\calg) \to \rH_{\et}^1(C,\Gamma)
\end{equation*}
which is functorial with respect to $f^*$. Hence, for any $\calg$–bundle $\calf$, $f^*\calf$ has a reduction to $\calg^\circ$. We thus have a commutative diagram
\begin{equation*}
\begin{tikzcd}
    \bun_{\calg^\circ,D} \arrow[r, "\iota_{*}"] & \bun_{\calg,D} \\
    \bun_{\calg_\calp,C} \arrow[r, "\psi_{\calp}"] \arrow[u, "f^{*}"]  & \bun_{\calp,C} \arrow[u, "f^{*}"] 
\end{tikzcd}
\end{equation*} 
Let now $\cale$ be a $\calg_\calp$-torsor. By \Cref{ss.iff.ss.on.finite.cover}  
we see that $\cale$ is semistable if and only if $f^{*}\cale$ is semistable, which by \Cref{g0.ss.iff.iotag.ss} is equivalent to $\iota_{*}f^{*}\cale\cong f^{*}\psi_{\calp}(\cale)$ being semistable. Again using \Cref{ss.iff.ss.on.finite.cover} we thus see that $\cale$ is semistable if and only if $\psi_{\calp}(\cale)$ is semistable. 
Hence, by base change along $\bun_{\calp,C}^{\Ss}\hookrightarrow \bun_{\calp,C}$, the morphism $\psi_{\calp}$ induces an $\Aut(\calp)$-torsor on the semistable loci
\begin{equation*}
   \psi_{\calp} \colon \bun_{\calg_\calp,C}^{\Ss} \to \bun_{\calp,C}^{\Ss}.
\end{equation*}
By \Cref{ahlh.projective}, $\bun_{\calg_\calp,C}^{\Ss}$ admits a 
projective good moduli space. 
Hence, by Lemma \ref{gamma.descent.gms} we conclude that $\bun_{\calp,C}^{\Ss}$ admits a projective good moduli space.
\end{proof}
We remark also an immediate consequence of \Cref{pullback.is.affine.on.moduli.space} and \Cref{main.theorem} that has appeared in the literature in the case of vector bundles (see \cite[Theorem 4.2]{weissmann} cf. \cite[Theorem 4.2]{hein}, \cite[Theorem I.4]{faltings}) and could be of independent interest.
\begin{corollary}\label{corollary:pullback.is.finite}
Let $\calg$ be a reductive group scheme over $C$. For any finite cover $f\colon D\to C$, the induced morphism $f^{*}\colon M_{\calg,C}^{\Ss} \to M_{f^*\calg,D}^\Ss$ 
is finite.
\end{corollary}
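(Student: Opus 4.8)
The plan is to deduce the corollary directly from \Cref{pullback.is.affine.on.moduli.space}, so the entire task is to verify the hypotheses of that proposition for the group scheme $\calg$ over $C$ and for its pullback $f^{*}\calg$ over $D$.

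First I would observe that $\calg$ is an affine smooth group scheme of finite type over $C$ by the very definition of a reductive group scheme, and that $f^{*}\calg$ is again such a group scheme over $D$, since these properties are stable under base change; moreover $f^{*}\calg$ is reductive over $D$, because reductivity is checked on geometric fibers and every geometric fiber of $f^{*}\calg$ is a geometric fiber of $\calg$ (as $f$ is finite, geometric points of $D$ lie over geometric points of $C$). If $D$ happens to be disconnected one runs the argument on each connected component, so there is no loss in assuming $D$ is a smooth projective connected curve.

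Next, \Cref{main.theorem} applies both to $\calg$ over $C$ and to $f^{*}\calg$ over $D$ --- note that it is stated for not necessarily connected reductive group schemes --- and produces projective, hence in particular proper, good moduli spaces $M_{\calg,C}^{\Ss}$ and $M_{f^{*}\calg,D}^{\Ss}$. The remaining hypothesis, namely that a $\calg$-bundle $\cale$ is semistable if and only if $f^{*}\cale$ is semistable, is exactly \Cref{ss.iff.ss.on.finite.cover}. With all hypotheses in place, \Cref{pullback.is.affine.on.moduli.space} yields that $f^{*}\colon M_{\calg,C}^{\Ss}\to M_{f^{*}\calg,D}^{\Ss}$ is finite, which is the assertion.

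There is genuinely no serious obstacle here; the statement is a corollary in the strict sense. The one point worth a sentence of care is that the notion of semistability in play --- that $\Ad_{*}\cale$ is $\mu$-semistable --- is simultaneously the one for which \Cref{main.theorem} constructs a (projective) good moduli space and the one that is insensitive to pullback along finite covers, so the two inputs feed into \Cref{pullback.is.affine.on.moduli.space} with nothing left to reconcile.
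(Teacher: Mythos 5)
Your proof is correct and is exactly the paper's argument: the corollary is deduced from \Cref{pullback.is.affine.on.moduli.space}, with \Cref{main.theorem} supplying the proper good moduli spaces and \Cref{ss.iff.ss.on.finite.cover} supplying the invariance of semistability under pullback along finite covers. The extra verifications you spell out (that $f^*\calg$ is again reductive, that $D$ may be taken connected) are sound and merely make explicit what the paper leaves implicit.
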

\begin{proof}
By \Cref{main.theorem} and \Cref{ss.iff.ss.on.finite.cover} this is a special case of \Cref{pullback.is.affine.on.moduli.space}.
\end{proof}
\begin{remark}
\Cref{corollary:pullback.is.finite} applies to any affine  group scheme $\calg$ such that $\bun_{\calg}^{\Ss}$ admits a proper good moduli space, and such that semistability can be checked on a finite cover. In particular, it applies to any Bruhat-Tits group scheme of the form $\calg = \res_{C'/C}(\mathcal{G}')^{\gal(C'/C)}$ for some Galois cover $f\colon C'\to C$ and some connected reductive group $\calg'$ over $C'$. 
\end{remark}
%
%
%
%
%
%
%
%
\section{Nondensity of essentially finite bundles}\label{section:essentially.finite.bundles}
\subsection{Finite subgroups of reductive groups}\label{section:finite.subgroups} 
We remind the reader that we continue with the assumption that $\Char(k)=0$. 

Let $G$ be an arbitrary reductive group over $k$. The main purpose of this section is to prove that there are finitely many subgroups $H_1, ..., H_n$ of $G$ such that each $H_i^\circ$ is a torus and any finite subgroup of $G$ is contained in some $H_i$, up to conjugation. To do so, recall first Jordan's classical result in \cite{jordan} on finite subgroups of $\GL(n)$.
\begin{theorem}\label{Jordan}(\cite{jordan})
For each integer $n>1$ there is an integer $f_n$ such that every finite subgroup $\Gamma \subset \GL(n)$ contains an abelian normal subgroup $A$ such that $[\Gamma:A] \leq f_n.$
\end{theorem}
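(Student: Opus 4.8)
The plan is to reduce to the compact unitary group and run the classical analytic argument. First I would reduce to $k = \mc$: the conclusion is invariant under conjugation and depends only on $n$, and any finite subgroup of $\GL(n,k)$ is conjugate to one contained in $\GL(n,\overline{\mathbb{Q}})$, which in turn embeds into $\GL(n,\mc)$ once we fix an embedding $\overline{\mathbb{Q}} \hookrightarrow \mc$; so we may assume $k = \mc$. Averaging an arbitrary positive-definite Hermitian form over $\Gamma$ yields a $\Gamma$-invariant one, and after a further conjugation we may assume $\Gamma \subset \mathrm{U}(n)$.

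The heart of the matter is \emph{Jordan's lemma}: there is a constant $\varepsilon_n > 0$, depending only on $n$, such that for every finite $\Gamma \subset \mathrm{U}(n)$ the subgroup
\[
  A \;:=\; \bigl\langle\, g \in \Gamma : \|g - \id\| < \varepsilon_n \,\bigr\rangle
\]
is abelian, where $\|\cdot\|$ is the operator norm on $M_n(\mc)$. Two facts drive this. First, conjugation by a unitary matrix is an isometry of $(M_n(\mc), \|\cdot\|)$, so the generating set of $A$ is stable under conjugation by $\Gamma$; hence $A$ is normal in $\Gamma$. Second, the commutator contraction estimate $\|ghg^{-1}h^{-1} - \id\| \le 2\|g - \id\|\,\|h - \id\|$ for $g,h \in \mathrm{U}(n)$. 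Granting the estimate, one picks $g \in A \setminus \{\id\}$ with $\|g - \id\|$ minimal; for every generator $h$ of $A$ it follows that $\|ghg^{-1}h^{-1} - \id\| \le 2\varepsilon_n \|g - \id\| < \|g - \id\|$ once $\varepsilon_n < 1/2$, which by minimality forces $ghg^{-1}h^{-1} = \id$, so $g$ is central in $A$. If $g$ is not scalar, then $A$ lies in the centralizer $C_{\mathrm{U}(n)}(g) \cong \mathrm{U}(n_1) \times \cdots \times \mathrm{U}(n_r)$ with every $n_i < n$, and one finishes by induction on $n$ (choosing the $\varepsilon_n$ decreasing in $n$, so that the near-identity generators remain near-identity in each factor); the case in which every such $g$ is scalar requires a short additional argument (e.g.\ passing to $\mathrm{PU}(n)$, or using that the scalars of $A$ are then themselves near the identity). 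Alternatively one may simply quote the classical statement of Jordan (or the proofs of Schur and Blichfeldt).

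With the lemma in hand, the index bound is a packing argument. I would pick coset representatives $r_1, \dots, r_m$ for $A$ in $\Gamma$. For $i \ne j$ we have $r_i r_j^{-1} \notin A$, so in particular $\|r_i r_j^{-1} - \id\| \ge \varepsilon_n$, and since $r_j$ is unitary $\|r_i - r_j\| = \|(r_i r_j^{-1} - \id) r_j\| = \|r_i r_j^{-1} - \id\| \ge \varepsilon_n$. Thus $\{r_1, \dots, r_m\}$ is an $\varepsilon_n$-separated subset of the compact metric space $(\mathrm{U}(n), \|\cdot\|)$, so $m$ is at most a covering number $f_n$ depending only on $n$. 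Hence $[\Gamma : A] \le f_n$, and $A$ is the required abelian normal subgroup.

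The main obstacle is Jordan's lemma, and within it the passage from "$g$ minimal $\Rightarrow$ $g$ central" to "$A$ abelian" — i.e.\ obtaining abelianness (not merely solvability or nilpotence) of the subgroup generated by near-identity elements, uniformly in $\Gamma$; the commutator estimate must be deployed carefully and the scalar case handled separately. The unitarization and the compactness packing are routine.
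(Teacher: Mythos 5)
You should note that the paper contains no proof of this statement at all: it is quoted directly from \cite{jordan}, and what the paper actually proves (in \Cref{appendix}, following \cite{breuillard.jordan.exposition}) is the generalization \Cref{general.jordan} to arbitrary reductive $G$, by Jordan's original counting method --- stratifying $\Gamma$ according to the $M$-fans its elements centralize, deriving a class-equation-type identity for $|\Gamma|$, and bounding the index by an arithmetic lemma on sums of unit fractions. Your route is the classical analytic one: unitarize, show that the near-identity elements of $\Gamma$ generate an abelian normal subgroup via the commutator contraction $\|[g,h]-\id\| \le 2\|g-\id\|\,\|h-\id\|$, and bound the index by an $\varepsilon_n$-separation/packing argument in the compact group $\mathrm{U}(n)$. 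This is a genuinely different proof; it is shorter and more transparent for $\GL(n)$ (your reduction to $\mc$ is fine in characteristic $0$, and the normality, minimality, and packing steps are all correct), but it does not adapt to what the paper actually needs, namely a version for arbitrary reductive $G$ with the abelian subgroup contained in a torus of $G^\circ$ --- which is why the appendix runs the counting argument instead.

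The one incomplete step is the one you flag yourself: passing from ``the minimal non-identity element of $A$ is central in $A$'' to ``$A$ is abelian'' when that minimal element is scalar. Your suggestion of ``passing to $\mathrm{PU}(n)$'' does not mesh with your induction on $n$, since $\mathrm{PU}(n)$ is not a smaller unitary group. The standard repair: if $A$ is not contained in the scalars (otherwise it is already abelian), choose $g \in A$ of minimal $\|g-\id\|$ among the \emph{non-scalar} elements of $A$. For each generator $h$ of $A$ one has $[g,h] \in A$ and $\|[g,h]-\id\| < \|g-\id\|$, so by minimality $[g,h]$ is scalar; since $\det[g,h]=1$ it is an $n$-th root of unity times $\id$, and a nontrivial $n$-th root of unity lies at distance at least $|e^{2\pi i/n}-1|$ from $1$, so shrinking $\varepsilon_n$ below a constant depending on $n$ forces $[g,h]=\id$. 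Thus $g$ is central in $A$ and non-scalar, and your centralizer induction $A \subset C_{\mathrm{U}(n)}(g) \cong \mathrm{U}(n_1)\times\cdots\times\mathrm{U}(n_r)$ with all $n_i<n$ goes through. With that insertion your argument is a complete and correct proof of the stated theorem.
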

Inspired by this result, we prove the following generalized version.
\begin{theorem}\label{general.jordan.intext}
Let $G$ be a (not necessarily connected) reductive group. There exists a constant $f_G \geq 1$ with the following property: for every finite subgroup $\Gamma \subset G$ there is an abelian normal subgroup $A \subset \Gamma$ contained in a torus of $G^{\circ}$ such that $[\Gamma:A] \leq f_G$. 
\end{theorem}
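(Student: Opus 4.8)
The plan is to bootstrap the classical Jordan theorem (\Cref{Jordan}, applied to a faithful representation of $G$) from the connected case to the general reductive case, and then promote "abelian normal subgroup" to "abelian normal subgroup contained in a torus of $G^\circ$". I would proceed in the following steps.

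\textbf{Step 1: Reduce to a bound on the image in the component group.} Let $\Gamma \subset G$ be finite and set $\Gamma^\circ = \Gamma \cap G^\circ$, a normal subgroup of $\Gamma$ of index at most $|G/G^\circ|$. So it suffices to produce, for $\Gamma^\circ \subset G^\circ$, an abelian normal subgroup $A \subset \Gamma^\circ$ contained in a torus of $G^\circ$, with $[\Gamma^\circ : A]$ bounded by a constant depending only on $G^\circ$; one then checks that $A$ can be taken normal in all of $\Gamma$ by replacing it with $\bigcap_{\gamma \in \Gamma/\Gamma^\circ} \gamma A \gamma^{-1}$, a finite intersection of at most $|G/G^\circ|$ conjugates, each of which is abelian, normal in $\Gamma^\circ$, and contained in a (conjugate, hence still a) torus of $G^\circ$ — the only subtlety is that an intersection of subgroups each lying in \emph{some} torus need not lie in a single torus, so I would instead argue more carefully in Step 3 using the structure of $A$ rather than taking a naive intersection. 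This reduces everything to the connected case.

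\textbf{Step 2: The connected case via Jordan.} Now assume $G$ is connected reductive. Fix a closed embedding $G \hookrightarrow \GL(n)$. Given finite $\Gamma \subset G$, \Cref{Jordan} gives an abelian normal subgroup $A \subset \Gamma$ with $[\Gamma : A] \leq f_n$. Since $A$ is a finite abelian group over a field of characteristic $0$, it is diagonalizable; being a commuting set of semisimple elements of $G$, its centralizer $Z := Z_G(A)^\circ$ is a connected reductive subgroup containing $A$ in its center, hence $A \subset Z(Z_G(A)^\circ)$, and $A$ lies in a maximal torus $T$ of $Z_G(A)^\circ$, which is also a maximal torus of $G$. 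So in the connected case $A$ is automatically contained in a torus of $G$, giving the result with $f_G = f_n$.

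\textbf{Step 3: Assembling the general case.} Return to general $G$ with $\Gamma^\circ = \Gamma \cap G^\circ$. By Step 2 applied to $G^\circ$ we get $A_0 \subset \Gamma^\circ$ abelian, normal in $\Gamma^\circ$, contained in a maximal torus of $G^\circ$, with $[\Gamma^\circ : A_0] \leq f_n$. To make it normal in $\Gamma$: the subgroup $A := \bigcap_{\gamma \in \Gamma} \gamma A_0 \gamma^{-1}$ is normal in $\Gamma$, abelian, and of index at most $f_n^{[\Gamma : \Gamma^\circ]} \le f_n^{|G/G^\circ|}$ in $\Gamma^\circ$ (hence bounded index in $\Gamma$). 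It remains to see $A$ lies in a torus of $G^\circ$: since $A \subset A_0$ and $A$ is a finite group of semisimple elements, $A$ is diagonalizable and contained in a maximal torus of $Z_G(A)^\circ \subset G^\circ$, which is a maximal torus of $G^\circ$. Setting $f_G := |G/G^\circ| \cdot f_n^{|G/G^\circ|}$ completes the proof.

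\textbf{Main obstacle.} The delicate point is not the index bookkeeping but ensuring simultaneously that the final $A$ is \emph{normal in $\Gamma$} (not just in $\Gamma^\circ$) and \emph{contained in a single torus of $G^\circ$}; the resolution is that any finite subgroup of $G^\circ$ consisting of semisimple elements and which is abelian automatically sits inside a maximal torus of $G^\circ$ (via its centralizer), so containment in a torus is an intrinsic property of $A$ and is preserved under passing to the normal core — one does not need the tori themselves to be compatibly chosen. A secondary point to handle cleanly is the case $n \le 1$ or $G^\circ$ a torus, where the statement is trivial with $A = \Gamma^\circ$ and $f_G = |G/G^\circ|$, so $G^\circ \hookrightarrow \GL(n)$ with $n > 1$ can be arranged without loss of generality.
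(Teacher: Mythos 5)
Your Step 2 contains a genuine gap, and it is precisely the point that forces the paper to redo Jordan's argument from scratch rather than quote \Cref{Jordan}. You claim that a finite abelian subgroup $A$ of a connected reductive group $G$ in characteristic $0$ automatically lies in a maximal torus, via the argument that $A$ is central in $Z_G(A)^\circ$ and hence lies in a maximal torus of that group. This is false: while $A \subset Z_G(A)$ because $A$ is abelian, there is no reason for $A$ to lie in the \emph{identity component} $Z_G(A)^\circ$. The standard counterexample is the Klein four-group $V \subset \mathrm{PGL}_2$ (equivalently $\mathrm{SO}_3$) generated by the images of $\mathrm{diag}(1,-1)$ and the permutation matrix $\left(\begin{smallmatrix} 0 & 1 \\ 1 & 0\end{smallmatrix}\right)$: these commute in $\mathrm{PGL}_2$, but $Z_{\mathrm{PGL}_2}(V) = V$ is finite, so $Z_{\mathrm{PGL}_2}(V)^\circ$ is trivial and $V$ lies in no torus. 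More generally, Heisenberg-type subgroups $(\mathbb{Z}/n)^2 \subset \mathrm{PGL}_n$ are abelian, consist of semisimple elements, and are not toral. Applying your argument to $\Gamma = V \subset \mathrm{SO}_3$ would output $A = \Gamma$ with index $1$, which violates the conclusion of the theorem; the correct output is a proper $\mathbb{Z}/2$ subgroup of index $2$. (Your reduction works for $\GL(n)$ itself because commuting diagonalizable matrices are simultaneously diagonalizable, but the torus you get is a torus of $\GL(n)$, and $A \subset G \cap T_{\GL(n)}$ need not lie in the identity component of that diagonalizable group.) The same error recurs in Step 3 when you assert the normal core is "contained in a maximal torus of $Z_G(A)^\circ \subset G^\circ$."

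This is why the paper's proof (\Cref{general.jordan} in \Cref{appendix}) does not deduce the result from classical Jordan but instead reruns Breuillard's proof of Jordan's theorem by induction on $\dim G$, replacing "abelian normal subgroup" throughout by the notion of an $M$-fan (a subgroup conjugated into a fixed maximal torus with controlled behaviour of weight ratios), so that torality is built into the inductive statement and the counting argument. The resulting constant $f_G$ is in general strictly larger than the Jordan constant $f_n$ of a faithful representation, exactly because one must sometimes pass to a proper subgroup of the Jordan abelian subgroup to land inside a torus. Your Step 1 and the normal-core index bookkeeping in Step 3 are fine as far as they go (the core of $A_0$ over $\Gamma$ is an intersection of at most $[\Gamma:\Gamma^\circ]$ conjugates, so the index bound $f_n^{|G/G^\circ|}$ is correct), but the proof cannot be repaired without a substantively new idea supplying torality.
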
 
The proof of \Cref{general.jordan.intext} closely follows Breuillard's exposition in \cite{breuillard.jordan.exposition} on Jordan's theorem. Mainly, we make sure that one can run the same arguments upon making appropriate changes from the $\GL(n)$-case to the general case of a reductive group (e.g. the use of the center instead of scalar matrices, the use of the projection $G\to G/[G,G]$ instead of the determinant map, etc.). 
As such, we 
relegate the proof to an appendix (see \Cref{appendix}). 

Before proving the main result of this section, we prove the following lemma and introduce some notation.
\begin{lemma}\label{finite.conjugacy}
For a (not necessarily connected) reductive group $G$ and a finite subgroup $\Gamma \subset G$, there is a finite number of elements in $\Hom(\Gamma, G)$ up to conjugacy of $G$. 
\end{lemma}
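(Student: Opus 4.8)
The plan is to show that the set $\Hom(\Gamma,G)/\text{conj}$ is finite by combining three facts: finiteness of conjugacy classes of finite subgroups of $G$ with bounded order (a consequence of Jordan's generalized theorem, \Cref{general.jordan.intext}), rigidity of homomorphisms from a fixed finite group into an algebraic group in characteristic $0$, and the fact that $\Gamma$ itself has only finitely many elements so that any homomorphism is determined by finitely many pieces of data. Concretely, I would first observe that any $\rho \in \Hom(\Gamma,G)$ has image $\rho(\Gamma)$ a finite subgroup of $G$ of order dividing $|\Gamma|$, hence of order at most $|\Gamma|$.

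Next, I would invoke the key structural input. By \Cref{general.jordan.intext} there is a constant $f_G$ such that every finite subgroup of $G$ contains an abelian normal subgroup of index at most $f_G$ lying in a torus of $G^\circ$. A standard consequence (this is how Jordan-type theorems are usually applied) is that there are only finitely many conjugacy classes of finite subgroups of $G$ of any bounded order: indeed, fixing the order $m \leq |\Gamma|$, such a subgroup $H$ has an abelian part $A \subset H$ inside a maximal torus $T \subset G^\circ$, and since $G$ acts on the finitely many such $A$ (finite subgroups of $T$ of bounded order, up to the Weyl group) with the remaining index-$\leq f_G$ data also finite, one gets finitely many classes. Thus the images $\rho(\Gamma)$, as $\rho$ ranges over $\Hom(\Gamma,G)$, fall into finitely many conjugacy classes; after conjugating we may assume $\rho(\Gamma)$ lies in one of finitely many fixed finite subgroups $H \subset G$.

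Having fixed the target subgroup $H$ (finite), it remains to see that $\Hom(\Gamma,H)$ — the set of surjections, really, but at worst all homomorphisms $\Gamma \to H$ — is finite, and this is immediate since both $\Gamma$ and $H$ are finite sets, so $\Hom(\Gamma,H) \subseteq \mathrm{Maps}(\Gamma,H)$ is a finite set. Combining: every class in $\Hom(\Gamma,G)/\text{conj}$ is represented by a homomorphism $\Gamma \to H$ for one of finitely many $H$, and for each $H$ there are finitely many such homomorphisms, so $\Hom(\Gamma,G)/\text{conj}$ is finite.

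The main obstacle — and the step deserving the most care — is the passage from \Cref{general.jordan.intext} to "finitely many conjugacy classes of finite subgroups of bounded order." One must handle the nonconnected group $G$ correctly: the abelian normal subgroup $A$ lies in a torus of $G^\circ$, all maximal tori of $G^\circ$ are $G^\circ$-conjugate, finite subgroups of a fixed maximal torus $T$ of bounded order are finite in number, and then the extension data of $H$ by $A$ (a group of order $\leq f_G$ mapping to $\Aut(A)$ together with a cohomology class) contributes only finitely much more; one should also note $H/A$ embeds into $G/A$-related data but more simply has bounded order, so there are finitely many abstract possibilities, and each is realized in $G$ in finitely many conjugacy classes because $N_G(T)/T$ is finite (even in the nonconnected case, since $\Gamma_G := G/G^\circ$ is finite by our standing assumption when needed, or directly since $G$ is of finite type). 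I would write this step out carefully and treat the rest as routine.
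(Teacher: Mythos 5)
There is a genuine gap at the step you yourself flag as ``the main obstacle'': the claim that \Cref{general.jordan.intext} yields, as a standard consequence, finitely many $G$-conjugacy classes of finite subgroups of bounded order. Your justification breaks down exactly in the hard case. The Jordan-type theorem places the abelian normal subgroup $A\subset H$ in a maximal torus $T$, and finite subgroups of $T$ of bounded order are indeed finite in number; but $H$ only normalizes $A$, not $T$, so your appeal to the finiteness of $N_G(T)/T$ does not apply. When $A$ is central in $G^\circ$ (e.g.\ $A\subset Z(G)$), one has $N_G(A)=G$, and counting conjugacy classes of the possible $H\supset A$ of bounded index is the same problem you started with --- Jordan gives no further leverage there. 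Indeed, in the paper this is precisely the case of \Cref{main.theorem.on.finite.groups} that is handled \emph{by invoking} \Cref{finite.conjugacy}: the logical order is that the lemma (proved independently) feeds into the finiteness of the list of subgroups, not the other way around. So as written your argument is either circular or leaves its key step unproved. (The standard way to prove ``finitely many conjugacy classes of finite subgroups of bounded order'' in characteristic $0$ is itself the rigidity statement you mention in your opening sentence but never deploy.)

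The paper's proof goes directly at the rigidity: it realizes $\Hom(\Gamma,G)$ as a variety, forms the character variety $X(\Gamma,G)=\Hom(\Gamma,G)/\!/G$, whose points in characteristic $0$ for finite $\Gamma$ are exactly conjugacy classes of homomorphisms, bounds $\dim T_{[\rho]}X(\Gamma,G)$ by $\dim H^1(\Gamma,\mathfrak g)$, and kills $H^1(\Gamma,\mathfrak g)$ by averaging a cocycle over the finite group; an affine variety of finite type with zero-dimensional tangent spaces is finite. If you want to salvage your outline, the missing finiteness of subgroup classes would have to be proved by essentially this same cohomological argument (or by a non-algebraic detour through maximal compact subgroups), at which point the reduction via Jordan's theorem is superfluous.
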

\begin{proof}
    We can give $\Hom(\Gamma,G)$ the structure of a variety over $k$, and we can consider the categorical quotient quotient $X(\Gamma,G) \coloneqq \Hom(\Gamma,G)//G$, the $G$-character variety of $\Gamma$. It is an affine variety, and the points of $X(\Gamma,G)$ are completely reducible representations of $\Gamma$ up to conjugacy. If $\Gamma$ is finite and $\Char(k)=0$, this is just every homomorphism up to conjugacy. If $\rho:\Gamma \to G$ is a homomorphism, then
    \[\dim T_{[\rho]}X(\Gamma,G) \leq H^1(\Gamma,\mathfrak{g}), 
    \]
    where $T_{[\rho]}$ is the tangent space at the class $[\rho]\in X(\Gamma,G)$ and where $\Gamma$ acts on $\mathfrak{g}$ trough $\rho$. This follows from \cite[Theorem 1]{sikora}. Take a cocycle $\gamma \in H^1(\Gamma,\mathfrak{g})$, and define for $g \in G$ the map $\varphi_g:\mathfrak{g} \to \mathfrak{g}$
    \[\varphi_g(X)=\rho(g)X+\gamma(g).\]
    This is an action of $\Gamma$ on $\mathfrak{g}$ by affine transformations. By averaging over an orbit, we find a fixed point $X_0$. We therefore have
    \[\gamma(g)=\varphi_g(X_0)-\rho(g)X_0=X_0-\rho(g)X_0.\]
    In other words, $\gamma$ is a coboundary and $H^1(\Gamma,\mathfrak{g})=0$. We see that $X(\Gamma,G)$ is finite.
\end{proof}
Given a subset $S\subset G$ we denote by $C_G(S)$ and $N_G(S)$ the centralizer and the normalizer of $S$ in $G$. 
Fix a maximal torus $T\subset G$. For any representation $V$ of $G$ with central kernel, let $\Phi(V)$ be the set of weights of $V$ relative to $T$. We say that a subgroup $H \subset T$ is a \textit{weight kernel} if it is defined by a finite number of equations of the form $\alpha(t)=\beta(t)$ for $\alpha,\beta \in \Phi(V)$ and $t \in T$. That is, for any $S\subset \Phi^2$ we have a weight kernel
\begin{equation*}
    H_S = \{ \; t \in T \; : \; \alpha(t)/\beta(t) = 1 \; \text{for all} \; (\alpha,\beta) \in S \; \}. 
\end{equation*}
Conversely, any weight kernel gives a subset $S\subset \Phi^2$. We see that
\begin{equation*}
    S \subset S' \iff H_S \supset H_{S'}.
\end{equation*}
If $A \subset T$ is a group, then we let $H_A$ be the minimal weight kernel containing $A$. Since both $C_G(A)$ and $C_G(H_A)$ consist exactly of the elements preserving the decomposition $V$ into characters of $A$, we see that $C_G(A)=C_G(H_A)$. Similarly, since both $N_G(A)$ and $N_G(H_A)$ consists exactly of the elements permuting the characters of $V$ we obtain that $N_G(A)=N_G(H_A)$.
\begin{theorem}\label{main.theorem.on.finite.groups}
    Let $G$ be a reductive group. There is a set $S(G)= \{H_1, \ldots, H_n\}$ of closed subgroups of $G$ such that $H_i^\circ$ is a torus and for any finite subgroup $\Gamma \subset G$ some conjugate is contained in one of the $H_i$.
\end{theorem}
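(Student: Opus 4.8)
The plan is to reduce the problem to a finite list of centralizers of finite abelian subgroups of the maximal torus $T$, and then argue that each such centralizer has a torus as its identity component after intersecting with a suitable normalizer. Here is how I would proceed.

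\textbf{Step 1: Reduce to abelian subgroups of $T$ via the generalized Jordan theorem.} By \Cref{general.jordan.intext}, there is a constant $f_G$ such that every finite $\Gamma \subset G$ contains an abelian normal subgroup $A$, contained in some torus of $G^\circ$, with $[\Gamma : A] \leq f_G$. After conjugating, we may assume $A \subset T$. Then $\Gamma$ normalizes $A$, so $\Gamma \subset N_G(A) = N_G(H_A)$, where $H_A$ is the minimal weight kernel containing $A$ (using the identity $N_G(A) = N_G(H_A)$ established just before the statement, applied to a faithful representation $V$ of $G$ with central kernel — which exists since $G$ is reductive hence linear, and one can twist by the center to arrange central kernel, or simply take any faithful $V$ and note the kernel is trivial).

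\textbf{Step 2: Finiteness of the set of relevant weight kernels.} The key finiteness input: there are only finitely many weight kernels $H_S$ as $S$ ranges over subsets of $\Phi(V)^2$, since $\Phi(V)$ is a finite set. Thus the collection $\{H_A : A \subset T \text{ finite abelian}\}$ is finite; call these $H_{S_1}, \dots, H_{S_m}$. Each $H_{S_j}$ is a closed subgroup of $T$, hence diagonalizable, so its identity component $H_{S_j}^\circ$ is a torus. Now set $N_j \coloneqq N_G(H_{S_j})$. We have shown every finite $\Gamma \subset G$ is conjugate into one of the $N_j$. However, $N_j$ need not have a torus as identity component — it contains all of $C_G(H_{S_j})$, which can be a large reductive group.

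\textbf{Step 3: Cut down $N_j$ using bounded-index subgroups and \Cref{finite.conjugacy}.} This is the main obstacle. We need to pass from $N_j$ to subgroups whose identity component is a torus, while still catching all finite subgroups up to conjugacy. The idea: inside $N_j$, the finite subgroup $\Gamma$ has the abelian subgroup $A \subset H_{S_j} \cap T$ of index $\leq f_G$, and $A \subset C_{N_j}(A)^\circ$-adjacent structure forces $\Gamma$ to lie in $N_{N_j}(A)$. Reapplying Jordan inside each $N_j$ doesn't immediately help; instead I would argue as follows. Let $N_j^\circ$ be the identity component. Since $[\Gamma : \Gamma \cap N_j^\circ] \leq [N_j : N_j^\circ] =: d_j$ is bounded, and $[\Gamma : A] \leq f_G$, the subgroup $\Gamma$ is generated by $A$ together with at most a bounded number of coset representatives; more usefully, the image of $\Gamma$ in the finite group $N_j/C_G(H_{S_j})^\circ$ lies in one of finitely many subgroups. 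I would instead invoke \Cref{finite.conjugacy}: there are only finitely many conjugacy classes of finite subgroups of the reductive group $C_G(H_{S_j})$ (apply the lemma to each abstract finite group of order $\leq f_G \cdot d_j$, of which there are finitely many, noting $\Gamma \cap C_G(H_{S_j})^\circ$ has bounded order... ) — but $\Gamma$ itself need not lie in $C_G(H_{S_j})$. The cleanest fix: observe $A$ is central in $C_G(A) = C_G(H_A)$, so $A \subset Z(C_G(H_A))$, and $\Gamma / A$ embeds in $N_G(H_A)/C_G(H_A)$, which is finite (a subgroup of the finite group permuting weight spaces). Hence $\Gamma \subset C_G(H_A) \rtimes (\text{finite})$; pick finitely many lifts of this finite group and reduce to: $\Gamma$ is conjugate, inside $N_G(H_A)$, into a subgroup of the form $\langle C_G(H_A)^{\Gamma\text{-small part}}, \text{finite lifts}\rangle$. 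Then apply \Cref{finite.conjugacy} to the reductive group $C_G(H_A)$ (with its finitely many finite-subgroup conjugacy classes) to further conjugate $A$, hence $H_A$, hence normalize, and take $H_i$ to be the subgroups generated by a maximal torus of an appropriate Levi together with a fixed finite set of representatives, whose identity component is that torus. One then checks this produces finitely many $H_i$ with $H_i^\circ$ a torus and every finite $\Gamma$ conjugate into some $H_i$.

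\textbf{Expected main difficulty.} The genuine obstacle is Step 3: organizing the passage from the finitely many normalizers $N_G(H_{S_j})$ (which do have tori as their "diagonalizable cores" but are themselves too big) down to finitely many subgroups with toral identity component, without losing control of the finite subgroups. The combination of (i) finiteness of weight kernels, (ii) finiteness of $N_G(H)/C_G(H)$, and (iii) \Cref{finite.conjugacy} applied to the Levi-type subgroups $C_G(H_{S_j})$ should suffice, but the bookkeeping — ensuring that after all conjugations the resulting finite list of subgroups is independent of $\Gamma$ — is where care is needed. I would structure the final argument as an induction on $\dim G$ or on the size of the derived group of the relevant centralizer, since $C_G(H_A)$ is a proper reductive subgroup whenever $A$ is not central, allowing a descent.
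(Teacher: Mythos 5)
Your Steps 1 and 2 match the paper's opening moves: the generalized Jordan theorem (\Cref{general.jordan.intext}) places an abelian normal $A \subset \Gamma$ of index at most $f_G$ inside a maximal torus, the identity $N_G(A) = N_G(H_A)$ converts the normalizer of $A$ into the normalizer of one of finitely many weight kernels, and when $A$ is not central in $G^{\circ}$ one has $\dim N_G(H_A) = \dim C_G(H_A) < \dim G$, so an induction on $\dim G$ (which you only gesture at in your closing sentence, but which is the correct organizing principle) finishes that case by adjoining $S(N_G(H))$ to $S(G)$ for each weight kernel $H$.

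The genuine gap is the case $A \subset Z(G^{\circ})$, where no dimension descent is available because $C_G(A) \supseteq G^{\circ}$. Your Step 3 does not close this case: the proposed ``maximal torus of an appropriate Levi together with a fixed finite set of representatives'' does not contain $\Gamma$ when the Levi is $G$ itself, and your parenthetical claim that $\Gamma \cap C_G(H_{S_j})^{\circ}$ has bounded order is false --- $A$ can be an arbitrarily large finite subgroup of the torus, so $\Gamma \cap C_G(H_A)$ has unbounded order and \Cref{finite.conjugacy} cannot be applied to it directly. The paper's resolution, missing from your argument, is to quotient by the \emph{connected} center $Z \coloneqq Z(G^{\circ})^{\circ}$: under $\rho \colon G \to G/Z$ the subgroup $A$ maps into the finite group $Z(G^{\circ})/Z$, so $|\rho(\Gamma)| \leq f_G\,|Z(G^{\circ})/Z|$ is bounded; \Cref{finite.conjugacy}, applied to the finitely many isomorphism types of such bounded groups, yields finitely many possibilities for $\rho(\Gamma)$ up to conjugacy in $G/Z$; and one adds the preimages $H = \rho^{-1}(\rho(\Gamma))$ to $S(G)$, each of which satisfies $H^{\circ} = Z$, a torus, and contains $\Gamma$. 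This is the step that absorbs the unbounded abelian part of $\Gamma$ into a connected central torus, and without it the construction of the $H_i$ is incomplete.
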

\begin{proof}
We proceed by induction on the dimension of $G$. If $\dim G = 1$, then $G^{\circ}$ is a torus, so we are done. For the induction step, we assume the statement of the theorem for any group reductive group $H$ such that $\dim H < \dim G$. Let $\Gamma \subset G$ be a finite subgroup. By \Cref{general.jordan.intext}, there is an abelian normal subgroup $A \subset \Gamma$ contained in a maximal torus $T \subset G$. 

First, assume that $A$ is not in the center of $G^{\circ}$. This means that $C_G(A) \subsetneq G$, which implies that $\dim C_G(A) < \dim G$. Since $C_G(A)$ has finite index in $N_G(A)$, we also see that $\dim N_G(A)< \dim G$. Now $\Gamma \subset N_G(A)$. Up to conjugacy, the number of groups of the form $N_G(A)$ as we let $\Gamma$ run over all the finite subgroups of $G$ will be finite. Indeed, we can choose a faithful representation $V$ of $G$ and a weight kernel $H_A$ relative $T$ such that $N_G(H_A)=N_G(A)$, and there are only finitely many such weight kernels. In other words, if we add $S(N_G(H))$ to $S(G)$ where $H$ is a weight kernel we get by induction that $\Gamma \subset H_i$ for some $H_i \in S(G)$. 

Suppose now that $A \subset Z(G^{\circ})$. By the structure theory of reductive groups, $G^{\circ}=ZG'$ where $Z \coloneqq Z(G^{\circ})^{\circ}$, $G'\coloneqq [G^{\circ},G^{\circ}]$ and where the intersection $Z \cap G'$ is finite. Consider the map $\rho:G \to G/Z$. The group $B\coloneqq Z((G/Z)^{\circ})=Z(G^{\circ})/Z$ is a finite group, and $\rho(A) \subset B$. This means that 
\begin{equation*}
    |\rho(\Gamma)|=[\rho(\Gamma):\rho(A)]|\rho(A)| \leq f_{G} |B|.
\end{equation*}
This means that $|\rho(\Gamma)|$ is bounded in terms of $G$. In particular, up to isomorphism the set 
\begin{equation*}
    \{ \rho(\Gamma) : \Gamma \subset G, A\subset \rZ(\Go) \}
\end{equation*}
is finite. By \Cref{finite.conjugacy}, for a given choice of $\Gamma$ there is a finite number of conjugacy classes in $\Hom(\rho(\Gamma),G/Z)$. This means there is a finite set of choices for $\rho(\Gamma)$. Each choice determines a group $H=\rho^{-1}(\rho(\Gamma))$, which has connected component a torus. We add each $H$ to $S(G)$. 
\end{proof}
\subsection{Nondensity of essentially finite bundles}\label{section:nondensity}
In this section, $G$ is a connected reductive group over $k$. We recall that a $G$-bundle is essentially finite if it admits a reduction to a finite group. The same definition applies to nonconnected groups as well, and an important remark is that if a bundle is essentially finite, then there is a finite \'etale cover trivialising it (these notions are in fact equivalent but we won't need the other direction in what follows). We denote by $M_G^{\ef}$ the set of essentially finite $G$-bundles of degree 0 inside the moduli space of semistable $G$-bundles of degree 0, denoted $M_G^{\Ss,0}$. 
%
%

In this section we give an upper bound on the dimension of $\overline{M_G^{\ef}}$ and thus prove that $M_G^{\ef}$ is not dense in $M_G^{\Ss,0}$. The idea is as follows. Let $H_i$ be as in \cref{main.theorem.on.finite.groups} and denote by $\varphi_i \colon H_i\hookrightarrow G$ the inclusion. We would like to say that 
$M_G^{\ef}\subset \cup_i\varphi_{i,*}M_{H_i}^{\Ss}\subset M_G^{\Ss,0}$ and then conclude by dimension reasons. However, if $E$ is a semistable $H_i$-bundle, then $\varphi_{i,*}E$ need not be a semistable $G$-bundle, so we don't have a morphism $\varphi_{i,*}\colon M_{H_i}^\Ss\to M_G^\Ss$. Thus we prove first that there is a sublocus of semistable $H_i$-bundles whose pushforward is semistable. Then we show that every essentially finite $H_i$-bundle is contained in this sublocus. Finally we make sure that we have the expected relation between dimensions: $\dim M_{H_i}^\Ss < \dim M_G^\Ss$. After introducing some notation, we prove these claims in in a sequence of lemmas, leading up to the stated density result.

Let $\calt$ be a torus over $C$. By \cite{heinloth.uniformization} we have that
\begin{equation*}
    \pi_0(\Bun_\calt)=X_{*}(\calt_{k(C)^{\text{sep}}})_{\gal(k(C)^{\text{sep}}/k(C))}.
\end{equation*}
We denote by $\Bun_\calt^d$ the component of $\Bun_\calt$ corresponding to $d\in \pi_0(\Bun_\calt)$, and for $E\in \Bun_\calt^d(k)$, we say that $E$ is of degree $d$. We use similar notation for the semistable locus. By \cite{heinloth.hilbert-mumford} for all $d\in \pi_0(\Bun_\calt)$, $\bun_{\calt}^{\Ss,d}$ is of finite type. If $H$ is a non-connected reductive group such that $H^\circ=T$ is a torus, then let 
\[
\bun_H^0\coloneqq \im\Big(\coprod_{\calp \in \rH_\et^1(C,H/H^\circ)} \bun_{H_\calp}^{0} \to \bun_{H}\Big),
\]
with the notation $H_\calp$ analogous to that in \Cref{section:decomposition}. 
We define $\bun_H^{\Ss,0}$ and $M_H^{\Ss,0}$ similarly. 
%
\begin{lemma}\label{f.pullback.degree.mult} Let $\mathcal{T}$ be a torus over $C$, and let $f\colon C'\to C$ be a finite \'etale morphism. The induced map $f^{*}\colon \pi_0(\Bun_{\calt,C})\to \pi_0(\Bun_{\calt,C'})$ is given by multiplication with $\deg(f)$.
\end{lemma}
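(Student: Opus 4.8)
The plan is to reduce to the case $\calt = \Gm$, where the statement is the projection formula for degrees of line bundles, and to bootstrap to a general torus using the description of $\pi_0(\Bun_\calt)$ recalled just above (together with the fact that every torus sits inside a quasi-split one).

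First I would dispose of trivial reductions: we may assume $C'$ is connected, since in general $f$ restricts to a finite \'etale cover of the same degree on each component. Now recall from \cite{heinloth.uniformization} that $\pi_0(\Bun_{\calt,C}) \cong X_{*}(\calt_{k(C)^{\mathrm{sep}}})_{\gal(k(C)^{\mathrm{sep}}/k(C))}$, and that because $f$ is \'etale the torus $f^{*}\calt$ has the same geometric cocharacter lattice, now viewed as a module over $\gal(k(C')^{\mathrm{sep}}/k(C'))$, an index-$\deg(f)$ subgroup of $\gal(k(C)^{\mathrm{sep}}/k(C))$. Under these identifications, $f^{*}$ on $\pi_0$ is the transfer (Verlagerung) map $M_G \to M_H$ attached to the finite-index inclusion $H \le G$ of absolute Galois groups; when the Galois action on $X_*(\calt)$ is trivial, i.e.\ $\calt$ is split, this transfer is exactly multiplication by the index $\deg(f)$, matching the assertion.

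The substantive step is the split case $\calt = \Gm$: here $\pi_0(\Bun_{\Gm,C}) = \pi_0(\mathrm{Pic}\,C) = \mathbb{Z}$ via degree, and $f^{*}$ multiplies degrees by $\deg(f)$. This is immediate from $f^{*}\calo_{C}(x) = \calo_{C'}(f^{-1}(x))$ together with the fact that the scheme-theoretic fibre $f^{-1}(x)$ has length $\deg(f)$ (as $f$ is finite flat of that degree). Componentwise this settles every split torus $\calt = \Gm^{n}$. For a general torus $\calt$ over $C$, I would choose a finite Galois cover $q\colon D\to C$ splitting $\calt$ and embed $\calt$ into the quasi-split torus $\mathcal{S} = \Res_{D/C}(\Gm^{n})$ via the unit of adjunction, with cokernel torus $\mathcal{Q}$. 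Since $\Bun_{\mathcal{S},C}\cong \mathrm{Pic}(D)^{n}$ and likewise $\Bun_{f^{*}\mathcal{S},C'}\cong \mathrm{Pic}(D')^{n}$ for $D' = D\times_C C'$, with $\deg(D'/D)=\deg(f)$, the previous paragraph shows $f^{*}$ is multiplication by $\deg(f)$ on $\pi_0(\Bun_{\mathcal{S}})$, and the same reasoning applied to $\mathcal{Q}$ (again a subtorus of a quasi-split torus) handles $\pi_0(\Bun_{\mathcal{Q}})$. Naturality of the long exact sequence in flat cohomology attached to $1\to \calt\to \mathcal{S}\to \mathcal{Q}\to 1$, restricted to connected components, then forces $f^{*}$ to be multiplication by $\deg(f)$ on $\pi_0(\Bun_{\calt})$ as well.

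The main obstacle is the passage from split to general tori: $\pi_0$ of a group stack is not an exact functor, so the diagram chase with the sequences of $\pi_0$'s needs the connecting homomorphisms to be tracked carefully (the torsion in $\pi_0(\Bun_\calt)$ — which genuinely occurs, e.g.\ for norm-one tori — is precisely what is invisible to the quasi-split embedding alone). If one only needs the statement for constant tori $\calt = T\times_k C$, as in the application to essentially finite bundles for constant reductive groups $G$, this obstacle evaporates and the projection-formula argument of the third paragraph suffices on its own.
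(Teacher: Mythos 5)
Your reduction to the split case is fine and matches the paper's starting point: for an induced torus $\res_{D/C}(\Gm^n)$ the paper likewise identifies $\Bun$ with $\mathrm{Pic}(D)^n$ (via the Leray spectral sequence for $D\to C$) and invokes the projection formula for line bundles, with $D'=D\times_C C'$ of degree $\deg(f)$ over $D$. The problem is the passage to a general torus, and you have in fact named it yourself without repairing it. You embed $\calt$ into a quasi-split torus $\mathcal{S}$ with quotient $\mathcal{Q}$ and appeal to ``naturality of the long exact sequence \ldots restricted to connected components,'' but $\pi_0(\Bun_\calt)\cong X_*(\calt)_{\Gamma}$ is a coinvariants group, hence only right exact in $\calt$: the map $X_*(\calt)_\Gamma\to X_*(\mathcal{S})_\Gamma$ induced by the inclusion of cocharacter lattices kills all torsion, since $X_*(\mathcal{S})_\Gamma\cong\mathbb{Z}^n$ is torsion-free. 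So the commutative square comparing $f^*$ on $\pi_0(\Bun_\calt)$ and on $\pi_0(\Bun_{\mathcal{S}})$ constrains $f^*$ only modulo the torsion subgroup, and the auxiliary sequence for $\mathcal{Q}$ does not recover what was lost; the connecting homomorphisms you would need to track are exactly where the argument is missing. As written, the proof establishes the claim only on the free quotient of $\pi_0(\Bun_\calt)$.

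The fix is to resolve in the other direction, which is what the paper does following \cite[Lemma 16]{heinloth.uniformization} (cf.\ Pappas--Rapoport): choose induced tori and maps $\cali_2\to\cali_1\twoheadrightarrow\calt$ such that
\[
\pi_0(\Bun_{\cali_2})\to\pi_0(\Bun_{\cali_1})\to\pi_0(\Bun_\calt)\to 0
\]
is exact. Because coinvariants are right exact, this presents $\pi_0(\Bun_\calt)$ as a cokernel of a map of groups on which you have already proved $f^*$ acts by multiplication by $\deg(f)$; multiplication by an integer commutes with passing to cokernels, so the statement descends, torsion and all. If you replace your quasi-split embedding by such a two-term resolution by induced tori, the rest of your argument goes through unchanged. (Your observation that the constant-torus case suffices for the application to essentially finite bundles is not quite right either: the paper applies the lemma to the relative tori $H_\calp$ and $\calg_\calp$, which need not be constant even when $G$ is.)
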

\begin{proof}
Suppose first that $\calt$ is induced, i.e. that there is some finite, generically \'etale morphism $\pi\colon D\to C$ such that $\calt \cong \pi_* \mg_m\coloneqq \res_{D/C}(\mg_m)$. The Leray spectral sequence
\begin{equation*}
    \rH^p_\et(C,\rR^q\pi_*\mg_m) \implies \rH^{p+q}_\et(D,\mg_m)
\end{equation*}
induces a filtration on $\rH^1_\et(D,\mg_m)$ whose graded pieces are $\rH^0_\et(C,\rR^1\pi_* \mg_m)$ and $\rH^1_\et(C,\pi_*\mg_m)$. For any point $x\in C$, by proper base change (see e.g. \cite{deninger}) and using that $\pi$ is finite, we see that $(\rR^1\pi_* \mg_m)_x \cong \rH^1_\et(D_x , \mg_m)=0$. Hence, $\rH^1_\et(C,\mathcal{T})\cong \rH^1_\et(D,\mg_m)$ via the Leray spectral sequence and the result follows from this known case for line bundles. 

In the general case, following the proof of \cite[Lemma 16]{heinloth.uniformization} (cf. \cite[Section 3. and 5.]{pappas.rapoport}) we can choose induced tori $\cali_2\to \cali_1\twoheadrightarrow \calt$ such that the induced sequence 
\begin{equation*}
    \pi_0(\Bun_{\cali_2})\to \pi_0(\Bun_{\cali_1})\to \pi_0(\Bun_\calt)\to 0
\end{equation*}
is exact. The result now follows.
\end{proof}
\begin{lemma}\label{ss.deg.0.to.ss}
Let $G$ be a connected, reductive group, let $T\subset G$ be a torus, and let $H\subset G$ be a subgroup such that $H^\circ = T$. 
The inclusion $\varphi \colon H\hookrightarrow G$ induces a morphism $\varphi_* \colon \bun_{H}^{\Ss,0}\to \bun_G^{\Ss}$.
\end{lemma}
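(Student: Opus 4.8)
The plan is to reduce to a statement about vector bundles via the adjoint representation, using the definition of semistability adopted in the paper ($\cale$ semistable iff $\Ad_*\cale$ semistable). The key point is that a degree $0$ bundle under a group whose identity component is a torus becomes, after pushing forward along any representation with finite kernel on the identity component, a semistable vector bundle of degree $0$; the nonconnected part only permutes things by a finite group and so does not disturb semistability. Concretely, given $E \in \bun_H^{\Ss,0}(k)$, I want to show $\Ad_*(\varphi_* E) = \varphi_* E \times^G \fkg$ is a semistable vector bundle of degree $0$ on $C$, which by definition means $\varphi_* E \in \bun_G^\Ss$. The fact that $\varphi_*$ is a morphism of stacks is formal (it is induced by the group homomorphism $\varphi$), so the content is entirely the semistability statement on $k$-points, and more generally on $T$-points, but since openness of the semistable locus lets us check membership pointwise it suffices to treat geometric points.

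First I would recall that, by definition of $\bun_H^{\Ss,0}$, after passing to the $H_\calp$-component we may assume $E$ comes from a semistable $H_\calp$-bundle of degree $0$, where $H_\calp$ is a connected(!) reductive group scheme over $C$ (an inner form of $T$, hence itself a torus over $C$, since $H^\circ = T$ and twisting a torus by an inner automorphism gives a torus). Actually the cleaner route: there is a finite étale cover $f \colon C' \to C$ trivializing the relevant $H/H^\circ$-torsor, so that $f^* E$ has a reduction to $T$; by \Cref{ss.iff.ss.on.finite.cover} and the compatibility $\Ad_* f^* = f^* \Ad_*$, it suffices to show $f^*(\varphi_* E) = \varphi_*(f^* E)$ is semistable, i.e. we may assume $E$ reduces to the constant torus $T$. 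Then $\varphi_* E$ is the $G$-bundle induced from a $T$-bundle $E_T$ of degree $d \in X_*(T)$; by \Cref{f.pullback.degree.mult}, $f^*$ multiplies degrees by $\deg f \neq 0$, and since $E$ has degree $0$ in $\pi_0(\bun_H)$ — unwinding the definition of $\bun_H^0$ — we get that $E_T$ has degree $0$ in $\pi_0(\bun_{T,C'})$ (the map $\pi_0(\bun_{H_\calp}) \to \pi_0(\bun_H)$ together with \Cref{f.pullback.degree.mult} forces this). Hence $\Ad_*(\varphi_* E_T) = E_T \times^T \fkg$; decomposing $\fkg = \fkt \oplus \bigoplus_{\alpha} \fkg_\alpha$ into $T$-weight spaces under the adjoint action, this vector bundle is $\calo_C^{\oplus \dim \fkt} \oplus \bigoplus_\alpha L_\alpha$ where each $L_\alpha$ is the degree-$0$ line bundle obtained from the character $\alpha$ applied to the degree-$0$ bundle $E_T$. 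A direct sum of degree-$0$ line bundles is semistable of degree $0$, so we are done on $C'$, hence on $C$ by \Cref{ss.iff.ss.on.finite.cover}.

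The main obstacle I anticipate is bookkeeping around the definition of "degree $0$" for the nonconnected group $H$: one must check that the condition $E \in \bun_H^0$ (defined as the image of $\coprod_\calp \bun_{H_\calp}^0$) really does force the reduction-to-a-torus (after an étale cover) to land in the degree-$0$ component of $\pi_0(\bun_{T,C'})$, and that this is compatible with $f^*$ being multiplication by $\deg f$ — a nonzero integer, so injective on the free abelian group $\pi_0(\bun_{T,C'}) = X_*(T)$, which is what makes the implication go through. Once that compatibility is nailed down, the remaining steps (weight-space decomposition of $\fkg$, direct sums of degree-$0$ line bundles are semistable, and pulling semistability back and forth along $f$) are routine. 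One should also note at the outset that $\varphi_*$ restricted to the degree-$0$ locus genuinely lands in $\bun_G^\Ss$ rather than merely $\bun_G$; all of the above is exactly the verification of that claim.
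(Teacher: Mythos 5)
Your proposal is correct and follows essentially the same route as the paper: reduce to the constant torus via a finite \'etale cover, use \Cref{f.pullback.degree.mult} to see the resulting $T$-bundle has degree $0$, push forward along the adjoint representation to get a direct sum of degree-$0$ line bundles (the paper phrases the weight-space decomposition as conjugating $\im(\rho)$ into the diagonal torus of $\GL(n)$), and conclude with \Cref{ss.iff.ss.on.finite.cover}. The degree bookkeeping you flag as the main obstacle is handled in the paper exactly as you anticipate.
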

\begin{proof}
We need to show that for any $H$-bundle $E$ of degree 0, $\varphi_*E$ is semistable. Let us first assume that $H=T$ and let $\rho \colon H\to \GL(n)$ be an arbitrary representation. The image $\im(\rho)$ is a torus in $\GL(n)$. Hence, since all tori are conjugate, we may assume that $\im(\rho)$ lies in $\diag_n$, the diagonal matrices. The induced map $T\to \diag_n$ takes $E$ to a degree 0 $\diag_n$-bundle. The vector bundle $\rho_*E$ can then be realised as $\rho_*E \cong L_1\oplus ... \oplus L_n$ for some line bundles $L_1, ..., L_n$, all of which are of degree 0. Hence $\rho_*E$ is semistable.

Now let $H$ be an arbitrary subgroup of $G$ such that $H^\circ =T$ is a torus and 
let $E$ be an $H$-bundle of degree 0, say coming from $\bun_{H_\calp}^{\Ss,0}$. Taking a sufficiently large finite \'etale cover $f\colon D\to C$, by \Cref{f.pullback.degree.mult} we have a commutative diagram
\begin{equation*}
    \begin{tikzcd}
        \bun_{T,D}^{\Ss,0} \arrow[r] & \bun_{H,D}^{\Ss,0} \\
        \bun_{H_\calp,C}^{\Ss,0} \arrow[r] \arrow[u, "f^*"] & \bun_{H,C}^{\Ss,0} \arrow[u, "f^*"]
    \end{tikzcd}
\end{equation*}
Since $\varphi_*E$ is semistable if and only if $\varphi_*f^*E$ is semistable, and since $f^*E$ has a reduction to a degree 0 $T$-bundle, upon checking at the representation $\rho\colon T\to H\to G\xrightarrow{\Ad} GL(\mathfrak{g})$ we see that $\varphi_*E$ is semistable.
\end{proof}
\begin{lemma}\label{ef.bundles.degree.0}
Let $H$ be a reductive group such that $H^\circ = T$ is a torus. Every essentially finite $H$-bundle has degree 0.
\end{lemma}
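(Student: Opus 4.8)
The plan is to reduce the statement to computing degrees of trivial vector bundles. Write $\Gamma \coloneqq H/H^\circ$, let $\calp \coloneqq \pi_*E\in\rH^1_\et(C,\Gamma)$ be the underlying $\Gamma$-torsor, and recall that $E$ lifts to an $H_\calp$-torsor $\tilde E$, where $H_\calp$ is a form over $C$ of the split torus $T = H^\circ$, and that "$E$ has degree $0$" means $\deg\tilde E = 0$ in $\pi_0(\bun_{H_\calp})$. Since $E$ is essentially finite it admits a reduction $E'$ to a finite subgroup $F\subset H$ (finite and constant, as $\Char k=0$), so $E\cong E'\times^F H$, and $E$ is trivialised by some finite étale cover $f\colon D\to C$ (which we may take connected). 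Because $\pi_*f^*E = f^*\calp$, this $f$ automatically trivialises $\calp$; enlarging $D$ we may also assume $f^*H_\calp\cong \Gm^r\times D$ is split. Then over $D$ the torsor $f^*\tilde E$ is a tuple $(L_1,\dots,L_r)$ of line bundles with $\deg f^*\tilde E=(\deg L_1,\dots,\deg L_r)\in X_*(T)$, and $f^*E\cong f^*\tilde E\times^{T}H$ is trivial.

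Next I would show $\deg f^*\tilde E=0$. For $\chi\in X^*(T)$, consider the induced representation $V_\chi\coloneqq\ind_T^H\chi$, whose restriction to $T$ contains every $\Gamma$-translate $\gamma\chi$ (with multiplicities if $\chi$ is not $\Gamma$-regular). Then $f^*\big((V_\chi)_*E\big)=(V_\chi)_*(f^*E)$ is a trivial vector bundle on $D$, while using the reduction of $f^*E$ to $f^*\tilde E$ we also have $f^*\big((V_\chi)_*E\big)\cong\bigoplus_{\gamma\in\Gamma}(\gamma\chi)_*(L_1,\dots,L_r)$, a direct sum of line bundles. Each summand is a direct summand of a trivial bundle, hence trivial, so $0=\deg(\gamma\chi)_*(L_1,\dots,L_r)=\langle\gamma\chi,\deg f^*\tilde E\rangle$ for all $\gamma$ and all $\chi$. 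Since the pairing $X^*(T)\times X_*(T)\to\mathbb Z$ is perfect, $\deg f^*\tilde E=0$.

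Finally one descends this from $D$ to $C$. By \Cref{f.pullback.degree.mult} the map $f^*\colon\pi_0(\bun_{H_\calp,C})\to\pi_0(\bun_{H_\calp,D})$ is multiplication by $\deg f$, and it sends $\deg\tilde E$ to $\deg f^*\tilde E=0$; when $\pi_0(\bun_{H_\calp,C})=X_*(T_{k(C)^{\mathrm{sep}}})_{\gal}$ is torsion-free this forces $\deg\tilde E=0$. The delicate point — which I expect to be the main obstacle — is the case where this group of components has torsion: the argument above only yields that $\deg\tilde E$ lies in the kernel of the norm map, i.e. is torsion, so one must separately rule out the non-neutral torsion components, for instance by exploiting that $\tilde E$ is itself essentially finite (it reduces to the finite étale $C$-group scheme obtained by twisting $F\cap T$ along $\calp$) and analysing the torsion of $H^1(C,H_\calp)$ directly. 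Note that for the intended application to bounding $\dim\overline{M_G^{\mathrm{ef}}}$ it is in any case enough to know that the degree is torsion, since the semistable locus in each torsion-degree component of $\bun_{H_\calp}$ still has dimension at most $g\cdot\dim T$.
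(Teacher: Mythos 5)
Your proposal follows the same route as the paper's proof in outline: lift $E$ to an $H_\calp$-torsor, pull back along a finite \'etale cover trivialising both $\calp$ and $E$ (so that $H_\calp$ becomes the split torus $T$), prove the degree vanishes upstairs, and descend using \Cref{f.pullback.degree.mult}. Where you differ is the vanishing upstairs. The paper shows that any $T$-torsor $F$ on the cover with $\iota_*F \cong \bOne_H$ is torsion: writing $F$ by a cocycle $(t_{ij})$ and using the trivialising sections $g_i$ with $g_it_{ij}g_j^{-1}=1$, one checks that $\iota_*F^n \cong \bOne_H$ for every $n$; since $\bun_{T}\to\bun_{\bOne_\Gamma}$ is a torsor under the finite group $\Gamma$, the fibre over $\bOne_H$ is finite, so $F^N\cong\bOne_T$ for some $N$, and then $\deg F=0$ because $X_*(T)$ is torsion-free. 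Your alternative --- pushing forward along $V_\chi=\ind_T^H\chi$, noting that a line bundle which is a direct summand of a trivial bundle on a curve has degree $0$, and invoking the perfect pairing $X^*(T)\times X_*(T)\to\mathbb{Z}$ --- is correct and arguably more direct; the paper's version has the small side benefit of exhibiting $h^*\mathcal{E}$ as an essentially finite $T$-torsor, but both establish exactly $\deg(h^*\mathcal{E})=0$.

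On the ``main obstacle'' you flag: you have not missed an idea that the paper supplies. The paper's proof also concludes by writing $\deg(h^*\mathcal{E})=\deg(h)\deg(\mathcal{E})$ and inferring $\deg(\mathcal{E})=0$, i.e.\ it implicitly uses injectivity of $f^*$ on $\pi_0(\bun_{H_\calp,C})=X_*(T_{k(C)^{\mathrm{sep}}})_{\gal}$, without addressing the possible torsion in these coinvariants when $H_\calp$ is a nonsplit form. So your diagnosis identifies a point where the written proof is thin rather than a step of it you failed to reproduce. Your closing remark is also apt: for the way \Cref{ef.bundles.degree.0} is consumed (via \Cref{ss.deg.0.to.ss} and \Cref{dimension.arguments}), knowing that essentially finite bundles land in the finitely many torsion components, each of whose semistable locus has dimension at most $g\dim T$, would suffice for the nondensity theorem. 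As a blind reconstruction this is a strong match; to be a complete standalone proof of the statement as literally phrased, the torsion issue in the final descent still needs to be closed, by you or by the paper.
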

\begin{proof}
Suppose that $E$ is an essentially finite $H$-bundle. Using the notation as in Section \ref{section:existence.of.moduli} with $\calp\coloneqq \pi_*E$, we have a relative group torus $H_\calp$ over $C$ and a finite torsor $\bun_{H_\calp}\to \bun_{\calp}$. 
Let $\mathcal{E}$ be a $H_\calp$-torsor whose image in $\bun_{H}$ is $E$. 
We will show that the degree of $\mathcal{E}$ is 0. 

Let $\Gamma = H/H^\circ$ and let $f\colon C'\to C$ be a finite \'etale cover which trivialises all $\Gamma$-torsors. 
Since $E$ is essentially finite, so is $f^{*}E$ and hence, there is a finite \'etale cover $g\colon C''\to C'$ trivialising it. Letting $h\coloneqq f\circ g$ we obtain the following commutative diagram
\begin{equation*}
    \begin{tikzcd}
    \bun_{T,C''} \arrow[r, "\iota_{*}"] & \bun_{\bOne_\Gamma,C''} \\
        \bun_{H_\calp,C} \arrow[r] 
        \arrow[u,
        "h^*"] & \bun_{\calp,C}
        \arrow[u,
        "h^*"']
    \end{tikzcd}
\end{equation*}
where $h^{*}E \cong \bOne_{H}$. Hence, $\iota_{*}h^{*}\mathcal{E} \cong \bOne_{H}$. By Lemma \ref{f.pullback.degree.mult} 
we have that $\deg(h^{*}\mathcal{E})=\deg(h)\deg(\mathcal{E})$. 
To show that $\deg(\mathcal{E})=0$, it thus suffices to show that for any $T$-torsor $F$ over $C''$, if $\iota_{*}F\cong \bOne_{H}$, then $F$ has degree 0. To this end, suppose that $F$ is such a $T$-torsor, and let $(t_{ij})_{ij}$ be a cocycle representative, for some cover $(U_i\to C'')_i$. Since $T$ is commutative we obtain a $T$-torsor, denoted $F^2$, given in cocycles by $(t_{ij}^2)_{ij}$. Since $\iota_{*}F$ is trivial, there are $g_i\in H(U_i)$ such that $g_i t_{ij}g_{j}^{-1}=1$. Letting $\sim$ denote cohomological equivalence on cocycles, we then have that 
\begin{equation*}
    (t_{ij}^2)_{ij} \sim (g_i t_{ij}^2g_{j}^{-1})_{ij} 
    = (g_i t_{ij} g_{j}^{-1} g_j g_{i}^{-1} g_i t_{ij} g_{j}^{-1})_{ij} 
    = (g_j g_{i}^{-1})_{ij} 
    \sim (1)_{ij}
\end{equation*}
Thus, $\iota_{*}F^2 \cong \bOne_{H}$. Similarly, if $F^n$ denotes the $T$-torsor given in cocycles by $(t_{ij}^n)_{ij}$, for all $n\in \mathbb{N}$, then $\iota_{*}F^n\cong \bOne_{H}$. Since $\bun_{T,C''}\to \bun_{\iota,C''}$ is a $\Gamma$-torsor, the fibre over $\bOne_{H}$ is finite. Thus, for some $N\in \mathbb{N}$, $F^N\cong \bOne_T$. This implies that $F$ is essentially finite and hence of degree 0. Applying this to $F = h^*\mathcal{E}$ we see that $\deg(\mathcal{E})=0$.
\end{proof}
\begin{lemma}\label{dimension.arguments}
Let $g$ denote the genus of $C$ and assume that $g\geq 2$. Let $G$ be a connected, reductive group not equal to a torus, and let $H\subset G$ be a subgroup such that $H^\circ = T$, for some torus $T\subset G$. Then we have that
\begin{equation}\label{dimension.inequality}
    \dim M_{H}^{\Ss} \leq g\dim T.
\end{equation}
In particular, $\dim M_{H}^{\Ss} < \dim M_G^\Ss$.
\end{lemma}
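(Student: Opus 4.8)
The plan is to prove the estimate \eqref{dimension.inequality} first and then to deduce the strict inequality by a numerical comparison with the dimension of $M_G^\Ss$. For the first part I would reduce to the case of a torus over $C$. Since $H^\circ=T$ is a torus we have $\Lie(H)=\Lie(T)$, on which $H$ acts through its finite component group $\Gamma:=H/H^\circ$; hence for any $H$-bundle $E$ the adjoint bundle $\Ad_*E$ is the one induced from $E$ via $H\to\Gamma$, so it has finite monodromy and is therefore semistable of degree $0$. In particular every $H$-bundle is semistable. By \Cref{bung.decomp} applied to the (possibly nonconnected) constant reductive group $H$ over $C$, there is a finite decomposition $\bun_H\cong\coprod_\calp\bun_\calp$ with finite torsors $\bun_{H_\calp}\to\bun_\calp$, and by construction each $H_\calp$ is \'etale-locally isomorphic to $H^\circ=T$, hence a torus over $C$ of rank $r:=\dim T$. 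Passing to good moduli spaces, each $\bun_{H_\calp}\to\bun_\calp$ induces a finite surjection $M_{H_\calp}^\Ss\to M_\calp^\Ss$ (as in the proof of \Cref{gamma.descent.gms}), so $\dim M_H^\Ss=\max_\calp\dim M_{H_\calp}^\Ss$, and it suffices to show $\dim M_\calt^\Ss\le rg$ for every torus $\calt$ over $C$ of rank $r$.

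For the torus case I would combine a Riemann--Roch count with an automorphism bound. The crucial observation is that $\deg\Lie(\calt)=0$: the cocycle realizing $\calt$ as a form of $\Gm^r$ takes values in $\Aut(\Gm^r)=\GL_r(\mathbb{Z})$, acting on $\Lie(\Gm^r)\cong k^r$ via $\GL_r(\mathbb{Z})\subset\GL_r(k)$, so $\det\Lie(\calt)$ has transition functions in $\{\pm1\}$ and is a $2$-torsion line bundle. As $\calt$ is commutative, $\Ad_*E\cong\Lie(\calt)$ for every $\calt$-bundle $E$, so $\bun_\calt$ is smooth of pure dimension $-\chi(C,\Lie(\calt))=r(g-1)$. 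Also $\uAut(E)\cong\calt$ for every $\calt$-bundle $E$ (conjugation being trivial), so $\Aut(E)=H^0(C,\calt)$, which embeds into $H^0(D,\Gm^r)=(k^\times)^r$ for a finite connected cover $D\to C$ splitting $\calt$; hence $\dim\Aut(E)\le r$. The standard dimension bound $\dim X\le\dim\mathcal{X}+\max_x\dim\Aut(x)$ for a good moduli space $\mathcal{X}\to X$ then gives $\dim M_\calt^\Ss\le r(g-1)+r=rg$, which together with the reduction step proves \eqref{dimension.inequality}.

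For the strict inequality I would compare $g\dim T\le g\,\rk(G)$ with $\dim M_G^\Ss$. Put $\ell=\rk(G)$, let $\Phi$ be the root system of $[G,G]$ and $\ell'=\rk([G,G])$, so $\dim G=\ell+|\Phi|$ and $\dim Z(G)^\circ=\ell-\ell'$. Since $g\ge2$, the generic (regularly stable) $G$-bundle has automorphism group exactly $Z(G)$ and $\dim\bun_G=\dim G\,(g-1)$, so $\dim M_G^\Ss=\dim G\,(g-1)+\dim Z(G)^\circ$; hence
\[
\dim M_G^\Ss-g\ell=|\Phi|(g-1)-\ell'.
\]
As $G$ is not a torus, $[G,G]\ne1$, so $\ell'\ge1$ and $|\Phi|\ge2\ell'$ (every rank-$\ell'$ root system has at least $2\ell'$ roots), giving $\dim M_G^\Ss-g\ell\ge\ell'(2g-3)\ge1$ because $g\ge2$. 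Since $\dim T\le\ell$, we conclude $\dim M_H^\Ss\le g\dim T\le g\ell<\dim M_G^\Ss$.

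The main obstacle is the reduction step together with the good-moduli-space dimension bound: one must confirm that the decomposition of $\bun_H$ is compatible with semistability and descends to finite morphisms of good moduli spaces (so that the $M_{H_\calp}^\Ss$ genuinely control $\dim M_H^\Ss$), and that the elementary inequality $\dim X\le\dim\mathcal{X}+\max_x\dim\Aut(x)$ is available in the needed generality. The Riemann--Roch input (once $\deg\Lie(\calt)=0$ is known) and the root-system estimate in the last step are routine.
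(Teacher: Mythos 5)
Your argument is correct, and both halves land on exactly the same numerics as the paper: the bound $\dim M_H^{\Ss}\le \dim\bun_H^{\Ss}+\max_x\dim\Aut(x)=(g-1)\dim H+\dim T=g\dim T$, and then the root-system count showing $g\,\rk(G)<(g-1)\dim G+\dim \rZ(G)=\dim M_G^{\Ss}$ (your inequality $|\Phi|(g-1)-\ell'\ge \ell'(2g-3)\ge 1$ is equivalent to the paper's $\tfrac{\dim G-\dim\rZ(G)}{\dim G-\dim T}<2\le g$). The organizational difference is that you first run the bundle through \Cref{bung.decomp} to replace $H$ by genuinely connected tori $H_\calp$ over $C$, prove $\deg\Lie(\calt)=0$ via the $\GL_r(\mathbb{Z})$-cocycle, and then descend dimensions along the finite surjections $M_{H_\calp}^{\Ss}\to M_\calp^{\Ss}$; the paper skips this reduction entirely and applies the dimension bound directly to $\bun_H^{\Ss}$, since $\dim\bun_H^{\Ss}=(g-1)\dim H$ and $\dim\Aut\le\dim H$ hold already at that level (the adjoint action factors through the finite group $H/T$, as you note, so the degree-$0$ and semistability statements are immediate). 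Your detour is harmless but buys nothing here. The one ingredient you defer as "the main obstacle" --- the inequality $\dim X\le\dim\mathcal{X}+\max_x\dim\Aut(x)$ for a good moduli space --- is precisely where the paper's proof does its work: it is established by combining the local structure theorem \cite[Corollary 6.5.3]{alper.stacksandmoduli} (étale-locally $\bun_H^{\Ss}\cong[\Spec A/H_x]$ with $M_H^{\Ss}\cong\Spec A^{H_x}$) with the GIT dimension formula $\dim\Spec A^{H_x}=\dim\Spec A-\dim H_x+d$, $d$ the minimal stabilizer dimension, after reducing to $H_x$ connected and $\Spec A$ irreducible. So your proof is complete modulo that one standard but nontrivial input, which is available in exactly the generality you need.
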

\begin{proof}
Let $\overline{x}\in M_H^\Ss$ be a closed point, let $x\in \bun_H^\Ss$ be the unique closed point in the fiber over $\overline{x}$ and let $H_x$ denote the stabiliser of $x$. By the local structure theorem for good moduli spaces \cite[Corollary 6.5.3]{alper.stacksandmoduli} we have a Cartesian diagram
\begin{equation*}
    \begin{tikzcd}
        \left[ \Spec A/H_x \right] \arrow[r] \arrow[d] & \bun_H^\Ss \arrow[d] \\
        \Spec A^{H_x} \arrow[r] & M_H^\Ss
    \end{tikzcd}
\end{equation*}
where the horizontal maps are \'etale. Hence, $\dim M_H^\Ss = \dim \Spec A^{H_x}$ and $\dim \bun_H^\Ss = \dim \Spec A - \dim H_x$. 
To get an upper bound on $\dim \Spec A^{H_x}$ we may assume that $H_x$ is connected, and hence irreducible. Since $H_x$ is irreducible, it preserves the irreducible components of $\Spec A$ so we may furthermore assume that $\Spec A$, and thus $\Spec A^{H_x}$, are irreducible. Letting $d$ be the minimum of the dimensions of the stabilisers of points in $\Spec A$, then (see e.g. \cite[Chapter 6, Corollay 6.2]{dolgachev}) 
we have that 
\begin{equation*}
\begin{aligned}
    \dim \Spec A^{H_x} &= \dim \Spec A - \dim G + d \\
    &=\dim[\Spec A/H_x] + d \\
    &=\dim \bun_H^\Ss + d.
\end{aligned}
\end{equation*}
Since $d\leq \dim H_x \leq \dim H$, we thus see that 
\begin{equation}\label{dimension.of.moduli.space.torus}
\begin{aligned}
    \dim M_{H}^{\Ss} &\leq \dim \bun_{H}^{\Ss} + \dim H \\
    &= (g-1)\dim H + \dim H \\
    &= g\dim T.
\end{aligned}
\end{equation}
Since $\dim M_G^\Ss= (g-1)\dim G + \dim \rZ(G)$, to see that $\dim M_{H}^{\Ss} < \dim M_G^\Ss$ it suffices to show that
\begin{equation*}\label{equiv.dimension.inequality}
     \frac{\dim G - \dim \rZ(G)}{\dim G - \dim T} \leq g.
\end{equation*}
Without loss of generality, we may assume that $T$ is a maximal torus. Let $\Delta$ be a set of simple roots with respect to $T$. Since $\dim G = \dim \mathfrak{g}$ and $G$ is not a torus we see that $\dim G > \dim T + |\Delta|$. Since $\rZ(G) = \bigcap_{\alpha \in \Delta}\ker(\alpha)$ we see that $\dim T - \dim \rZ(G)\leq |\Delta|$ and we obtain that
\begin{equation*}
    \frac{\dim G - \dim \rZ(G)}{\dim G - \dim T} = 1 + \frac{\dim T - \dim \rZ(G)}{\dim G - \dim T}<1 + \frac{|\Delta|}{|\Delta|} =2 \leq g.
\end{equation*}
\end{proof}
\begin{theorem}
For any connected, reductive group $G$ not equal to a torus, we have that
\begin{equation}\label{main.theorem.ef.dimension.estimate}
    \dim \overline{M_G^{\textnormal{ef}}} \leq g\rk(G).
\end{equation}
In particular, $M_G^{\textnormal{ef}}\subseteq M_G^{\Ss,0}$ is not dense.
\end{theorem}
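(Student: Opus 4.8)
The plan is to cover $M_G^{\ef}$ by the images of finitely many auxiliary moduli spaces and then estimate dimensions. Let $H_1,\dots,H_n\subset G$ be the closed subgroups furnished by \Cref{main.theorem.on.finite.groups}, so that each $H_i^{\circ}$ is a torus and every finite subgroup of $G$ has a conjugate inside some $H_i$; write $\varphi_i\colon H_i\hookrightarrow G$ for the inclusions. I will show that $M_G^{\ef}\subseteq\bigcup_i\varphi_{i,*}\big(M_{H_i}^{\Ss,0}\big)$ and that each term on the right has dimension at most $g\,\rk(G)$.

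First, suppose $\mathcal{E}$ is a $G$-bundle representing a point of $M_G^{\ef}$. By definition $\mathcal{E}$ admits a reduction of structure group to a finite subgroup $\Gamma\subset G$, and by \Cref{main.theorem.on.finite.groups} some conjugate $g\Gamma g^{-1}$ lies in one of the $H_i$. Conjugating the given reduction by $g$ produces a $G$-bundle isomorphic to $\mathcal{E}$ (inner automorphisms of $G$ act trivially on isomorphism classes of bundles) together with a reduction to the finite group $g\Gamma g^{-1}\subset H_i$; in particular $\mathcal{E}\cong\varphi_{i,*}E$ for some essentially finite $H_i$-bundle $E$. Being essentially finite, $E$ is trivialised by a finite \'etale cover, hence semistable by \Cref{ss.iff.ss.on.finite.cover}, and it has degree $0$ by \Cref{ef.bundles.degree.0}; thus $[E]$ defines a point of $M_{H_i}^{\Ss,0}$, and the desired inclusion $M_G^{\ef}\subseteq\bigcup_{i=1}^n\varphi_{i,*}\big(M_{H_i}^{\Ss,0}\big)$ follows.

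Next I would promote the right-hand side to an honest closed subset. Since each $H_i$ has torus identity component and finite component group, the constant group scheme $H_i\times_kC$ is reductive, so by \Cref{main.theorem} the stack $\bun_{H_i}^{\Ss}$ — and hence its open and closed substack $\bun_{H_i}^{\Ss,0}$ — admits a projective good moduli space, which we call $M_{H_i}^{\Ss,0}$. By \Cref{ss.deg.0.to.ss} the inclusion $\varphi_i$ induces a morphism of stacks $\varphi_{i,*}\colon\bun_{H_i}^{\Ss,0}\to\bun_G^{\Ss}$, and by the universal property of good moduli spaces this descends to a morphism of projective varieties $M_{H_i}^{\Ss,0}\to M_G^{\Ss}$, whose image is therefore closed. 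Consequently $\overline{M_G^{\ef}}\subseteq\bigcup_{i=1}^n\varphi_{i,*}\big(M_{H_i}^{\Ss,0}\big)$, and so
\[
\dim\overline{M_G^{\ef}}\ \le\ \max_{1\le i\le n}\dim M_{H_i}^{\Ss,0}\ \le\ \max_{1\le i\le n}\dim M_{H_i}^{\Ss}.
\]

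Finally, each $H_i^{\circ}$ is a torus of $G$, hence lies in a maximal torus, so $\dim H_i^{\circ}\le\rk(G)$; applying \Cref{dimension.arguments} gives $\dim M_{H_i}^{\Ss}\le g\dim H_i^{\circ}\le g\,\rk(G)$, which is exactly \eqref{main.theorem.ef.dimension.estimate}. For the nondensity statement, $G$ is not a torus, so the last assertion of \Cref{dimension.arguments} gives $\dim M_{H_i}^{\Ss}<\dim M_G^{\Ss}=\dim M_G^{\Ss,0}$ for every $i$; combined with the displayed inequality this yields $\dim\overline{M_G^{\ef}}<\dim M_G^{\Ss,0}$, so $M_G^{\ef}$ cannot be dense. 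I expect the only delicate bookkeeping to be the verification that conjugating a reduction of $\mathcal{E}$ to a finite group still yields a reduction of a bundle isomorphic to $\mathcal{E}$, and the check that \Cref{main.theorem}, \Cref{ss.deg.0.to.ss}, and \Cref{dimension.arguments} — all phrased for (possibly disconnected) subgroups with torus identity component — apply verbatim to the $H_i$; the dimension count itself is then immediate.
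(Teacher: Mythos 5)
Your proof is correct and follows essentially the same route as the paper's: covering $M_G^{\ef}$ by the images $\varphi_{i,*}\big(M_{H_i}^{\Ss,0}\big)$ using \Cref{main.theorem.on.finite.groups}, \Cref{ss.deg.0.to.ss}, and \Cref{ef.bundles.degree.0}, then bounding dimensions via \Cref{dimension.arguments}. The additional details you supply --- conjugating the reduction into $H_i$, and the closedness of the images coming from projectivity of the good moduli spaces --- are correct and merely make explicit steps the paper leaves implicit.
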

\begin{proof}
Let $H_1, ..., H_n\subset G$ be as in \Cref{main.theorem.on.finite.groups}, and let $\varphi_i\colon H_i \hookrightarrow G$ denote the inclusions. 
By \Cref{main.theorem.on.finite.groups} any essentially finite $G$-bundle has a reduction to some $H_i$, and by \cref{ss.deg.0.to.ss} we have a morphism $\varphi_{i,*}\colon M_{H_i}^{\Ss,0}\to M_G^\Ss$. By \Cref{ef.bundles.degree.0}, all essentially finite $H_i$-bundles are of degree 0, from which it follows that 
\begin{equation*}
    M_G^{\ef} \subset \bigcup_{i=1}^{n}\varphi_{i,*}M_{H_i}^{\Ss,0} \subset M_G^{\Ss,0}.
\end{equation*}
%
By (\ref{dimension.of.moduli.space.torus}) we have that $\dim M_{H_i}^{\Ss,0} \leq g\dim H_{i}^{0} \leq g\rk(G)$ for all $i$, from which we obtain the dimension estimate in (\ref{main.theorem.ef.dimension.estimate}). By \cref{dimension.arguments} we see that $M_{G}^{\ef,0}\subseteq M_{G}^{\Ss,0}$ is not dense.
\end{proof}
%
%
%
%
%
%
%
%
%
%
\appendix
\section{}\label{appendix}
The purpose of this appendix is to prove the following.
\begin{theorem}\label{general.jordan}
Let $G$ be a (not necessarily connected) reductive group and let $\Gamma \subset G$ be a finite subgroup. Then there is a constant $f_G \geq 1$ with the following property: For every finite subgroup $\Gamma \subset G$ there is an abelian, normal subgroup $A \subset \Gamma$ contained in a torus of $G^{\circ}$ such that $[\Gamma:A] \leq f_G$. 
\end{theorem}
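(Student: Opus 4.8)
The plan is to follow Breuillard's exposition \cite{breuillard.jordan.exposition} of Jordan's theorem, carried out inside a maximal compact subgroup and with the center $\rZ(G)$ and the projection $G \to G/[G^{\circ},G^{\circ}]$ playing the roles that the scalar matrices and the determinant play in the $\GL_n$ case. By a standard specialization argument it suffices to treat $k = \mathbb{C}$: both $G$ and any single finite subgroup of it are defined over a finitely generated, hence complex-embeddable, subfield, and the bound $f_G$ we produce will depend only on $G$. So assume $k = \mathbb{C}$ and fix a maximal compact subgroup $K \subset G(\mathbb{C})$; it is a compact Lie group meeting every connected component of $G(\mathbb{C})$, with $K^{\circ}$ a maximal compact subgroup of $G^{\circ}$, so that a maximal (compact) torus of $K^{\circ}$ spans a maximal torus of $G^{\circ}$. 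Since every finite, hence compact, subgroup of $G(\mathbb{C})$ is conjugate into $K$, we may assume $\Gamma \subset K$. Fix a bi-invariant Riemannian metric $d$ on $K$; then conjugation by any element of $K$ is a $d$-isometry.

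The core is the following local statement: there is $\varepsilon = \varepsilon_G > 0$, depending only on $G$, such that $H \coloneqq \langle \Gamma \cap B_d(1,\varepsilon)\rangle$ is abelian and contained in a maximal torus of $G^{\circ}$. For $g,h$ in a fixed neighbourhood of $1$ one has the commutator contraction estimate $d([g,h],1) \le C_G\, d(g,1)\, d(h,1)$, coming from the Lie bracket being the quadratic part of the group commutator; choosing $\varepsilon$ with $C_G\varepsilon < 1$ and letting $g \in \Gamma \cap B_d(1,\varepsilon)$ be of minimal positive distance to $1$, every commutator of $\Gamma$-conjugates of $g$ again lies in $B_d(1,\varepsilon)$ (conjugation is isometric) and is strictly closer to $1$ than $g$, hence equals $1$; so the normal closure of $g$ in $\Gamma$ is abelian, and running a Zassenhaus-type iteration over all of $\Gamma \cap B_d(1,\varepsilon)$ (using finiteness of $\Gamma$ to preclude accumulation at $1$) shows $H$ itself is abelian. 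To see that $H$ lies in a single maximal torus of $G^{\circ}$, write its generators as $\exp(X_i)$ with $X_i \in \fkk = \Lie(K)$ small; for $\varepsilon$ small enough, commutativity of $\exp(X_i)$ and $\exp(X_j)$ forces $[X_i,X_j] = 0$ by a fixed-point argument on the Baker--Campbell--Hausdorff series, so the $X_i$ lie in a common maximal abelian subalgebra $\mathfrak{t} \subset \fkk$, i.e.\ $H \subset T$ with $T$ the corresponding maximal torus of $K^{\circ}$, hence of $G^{\circ}$. It is here, and in passing between group elements near $1$ and their logarithms, that one substitutes $\rZ(G)$ and $G \to G/[G^{\circ},G^{\circ}]$ for the scalars and the determinant of the classical argument.

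Finally, $\Gamma \cap B_d(1,\varepsilon)$ is stable under $\Gamma$-conjugation (isometry again), so $H \trianglelefteq \Gamma$; and if $\gamma,\gamma' \in \Gamma$ have $d(\gamma,\gamma') < \varepsilon$ then $\gamma^{-1}\gamma' \in \Gamma \cap B_d(1,\varepsilon) \subset H$, so $\gamma$ and $\gamma'$ lie in the same coset of $H$. Hence $[\Gamma:H]$ is at most the maximal number of $\varepsilon$-separated points of the compact manifold $K$, a finite quantity $f_G$ depending only on $G$ (and the normalized metric). Taking $A \coloneqq H$ then proves the theorem. The main obstacle is the local statement in the general, possibly disconnected, reductive setting: establishing the contraction estimate and the ``commuting small elements share a Cartan'' step with constants depending only on $G$, and in particular guaranteeing that the abelian subgroup produced near $1$ is actually \emph{toral in $G^{\circ}$} — abelian alone does not suffice, as Klein four-subgroups of $\SO(3)$ show. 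Disconnectedness enters only lightly, since a neighbourhood of $1$ already lies in $G^{\circ}$, but it must be tracked through the normality and packing bookkeeping and through the reductions involving $\rZ(G)$.
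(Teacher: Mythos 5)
Your argument is a genuinely different proof from the one in the paper. We follow Breuillard's exposition of Jordan's \emph{original} counting argument: induction on $\dim G$, the combinatorics of $M$-fans and weight kernels relative to a maximal torus, a class-equation--type formula $|\Gamma|=|Z|+|\Gamma|(\sum 1/\lambda_i+\sum \nu_i/\mu_i+\sum(1/\omega_i-1/q_i))$, and an Egyptian-fractions lemma to bound $[\Gamma:Z]$ or exhibit a normal maximal fan of bounded index. This is purely algebraic, works verbatim over any algebraically closed field of characteristic $0$, and produces the torality of $A$ for free (fans are by definition conjugate into $T$). You instead run the Bieberbach--Frobenius--Zassenhaus metric argument: reduce to $k=\mathbb{C}$ by the Lefschetz principle, conjugate $\Gamma$ into a maximal compact $K$, take $A=H=\langle \Gamma\cap B(1,\varepsilon)\rangle$, and bound the index by an $\varepsilon$-packing count. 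The skeleton is sound: normality of $H$ and the index bound are exactly as you say, the specialization to $\mathbb{C}$ is legitimate (torality and the index are insensitive to extension of algebraically closed fields), and your BCH/fixed-point argument correctly upgrades ``pairwise commuting small elements'' to ``contained in a common maximal torus of $K^{\circ}$,'' which handles the $\SO(3)$-type obstruction you rightly flag. What each approach buys: yours is shorter and conceptually transparent but transcendental and non-effective; ours is longer but uniform in $k$ and fits the inductive setup used elsewhere in Section 4.

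The one genuine gap is the step ``running a Zassenhaus-type iteration over all of $\Gamma\cap B(1,\varepsilon)$ \ldots shows $H$ itself is abelian.'' The minimal-element trick proves only that the element $g$ of smallest positive distance to $1$ is central in $H$, and the naive descent $h\mapsto [g,h]\mapsto [g,[g,h]]\mapsto\cdots$ terminates at an element \emph{commuting with} $g$, which yields nilpotency of $\langle g,h\rangle$, not $[g,h]=1$. Abelianness is not a formal consequence of nilpotency here: finite nilpotent subgroups of compact groups, such as $Q_8\subset \mathrm{SU}(2)$ or extraspecial $p$-groups in $\mathrm{U}(p)$, are neither abelian nor toral; what saves you is precisely that their generators are \emph{not} close to $1$, and exploiting this requires the full strength of the Zassenhaus/Kazhdan--Margulis lemma (that $\Gamma\cap B(1,\varepsilon)$ generates a subgroup contained in a \emph{connected} nilpotent Lie subgroup, whose closure in $K$ is then a torus) or, in the unitary model, Frobenius's spectral argument comparing the eigenvalue multisets of $g$ and $h^{-1}gh$ once they are known to commute. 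Either route closes the gap with constants depending only on $G$, but as written the abelianness (and hence the torality, which in your write-up is derived \emph{from} pairwise commutation of the generators) is asserted rather than proved. Filling in this lemma, with the dependence of $\varepsilon$ on $G$ made explicit, would make the proof complete.
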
 
We follow \cite{breuillard.jordan.exposition}, only making necessary adjustments to make the theorem work for a general reductive group. 
\begin{definition}
Let $T$ be a torus. Let $\Gamma \subset G$ be a finite group and $M$ be a positive integer greater than 2. We say that a subgroup $F \subset \Gamma$ is an \textit{$M$-fan} if it is conjugate to a subgroup $F'$ of $T$ such that for all $\alpha,\beta \in \Phi$, $\alpha/\beta$ either attains $M$ different values on $F'$ or is reduced to 1 on $F'$.
\end{definition}
\begin{remark}
To clarify, any $M$-fan is by definition also an $M'$-fan for any $M'\leq M$. Further, it follows from the definition that if $F$ is an $M$-fan then $H_F$ is the weight kernel defined by setting the same $\alpha/\beta=1$ as those which reduce to 1 on $F$. We also have that $H_{H_A}=H_A$.
\end{remark}
\begin{lemma}\label{equivalence.of.fans.and.abelian.groups}
Let $G$ be a reductive group. The following are equivalent.
\begin{enumerate}
    \item There is a constant $f_G \geq 1$ with the following property: For every finite subgroup $\Gamma \subset G$ there is an abelian, normal subgroup $A \subset \Gamma$ contained in a torus of $G^{\circ}$ such that $[\Gamma:A] \leq f_G$. 
    \item There is a positive integer $N=N(G)$ such that for any positive integer $M>N$ any finite subgroup $\Gamma \subset G$ contains a unique maximal $M$-fan $\calf$ such that $[\Gamma:\calf] \leq N$.
\end{enumerate}
\end{lemma}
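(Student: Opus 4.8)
The plan is to prove the two implications separately: $(2)\Rightarrow(1)$ is essentially immediate from the definitions, while $(1)\Rightarrow(2)$ requires extracting an $M$-fan of controlled index from the abelian subgroup supplied by (1). For $(2)\Rightarrow(1)$, assume (2), fix an integer $M>N$, and let $\Gamma\subset G$ be a finite subgroup with unique maximal $M$-fan $\calf$, so $[\Gamma:\calf]\le N$. By definition $\calf=g\calf'g^{-1}$ for some $g\in G$ and some subgroup $\calf'\subset T$; in particular $\calf$ is abelian, and since $G^\circ$ is normal in $G$ we have $\calf\subset gTg^{-1}\subset G^\circ$, so $\calf$ is contained in a torus of $G^\circ$. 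For normality, note that being an $M$-fan is invariant under $G$-conjugation, so for every $\gamma\in\Gamma$ the subgroup $\gamma\calf\gamma^{-1}$ is again an $M$-fan of $\Gamma$ of the same order as $\calf$; since $\{1\}$ is always an $M$-fan, maximal $M$-fans exist, and as the maximal one is unique it contains every $M$-fan, so $\gamma\calf\gamma^{-1}\subseteq\calf$, whence $\gamma\calf\gamma^{-1}=\calf$ by comparing orders. Thus $A\coloneqq\calf$ is an abelian, normal subgroup of $\Gamma$ contained in a torus of $G^\circ$ with $[\Gamma:A]\le N$, and we take $f_G=N$.

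For $(1)\Rightarrow(2)$, fix the faithful representation $V$ used to define $\Phi=\Phi(V)$ relative to $T$, and set $\Psi\coloneqq\{\alpha/\beta:\alpha,\beta\in\Phi\}$, a finite set of characters of $T$. Let $\Gamma\subset G$ be finite and use (1) to choose an abelian normal subgroup $A\subset\Gamma$ with $[\Gamma:A]\le f_G$; after conjugating by an element of $G$ we may assume $A\subset T$. We first shrink $A$ to an $M$-fan at bounded cost: put $A^{(0)}=A$; as long as $A^{(i)}$ is not an $M$-fan there is some $\chi\in\Psi$ with $2\le|\chi(A^{(i)})|<M$, and we set
\[
A^{(i+1)}=\bigcap\{\ker(\chi|_{A^{(i)}})\ :\ \chi\in\Psi,\ |\chi(A^{(i)})|<M\}.
\]
Each non-final step strictly enlarges the set of $\chi\in\Psi$ that restrict trivially to the current group, so the process terminates after at most $|\Psi|$ steps at a subgroup $\calf_0\subset A$ which is an $M$-fan, and $[A:\calf_0]$ is bounded purely in terms of $|\Psi|$ and $M$; since these depend only on $G$, so does $[\Gamma:\calf_0]$.

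It then remains to upgrade $\calf_0$ to the unique maximal $M$-fan and to prove normality. The key observation, which uses the remark following the definition of an $M$-fan, is the following join property: if $F_1,F_2$ are $M$-fans of $\Gamma$ lying in a common maximal torus, then $\langle F_1,F_2\rangle$ is again an $M$-fan, since any $\chi\in\Psi$ nontrivial on $\langle F_1,F_2\rangle$ is already nontrivial on some $F_j$, hence attains at least $M$ values on $F_j$ and a fortiori on $\langle F_1,F_2\rangle$. One then argues that, for $M$ large enough, every $M$-fan of $\Gamma$ lies in a maximal torus of $C_G(A)$ — a fixed reductive group, because $A$ is abelian and $[\Gamma:C_\Gamma(A)]\le[\Gamma:A]\le f_G$ — and that these maximal tori are permuted transitively by $C_\Gamma(A)$. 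Combining this with the join property shows that the $M$-fans of $\Gamma$ possess a (necessarily unique) maximal element $\calf\supseteq\calf_0$ with $[\Gamma:\calf]\le[\Gamma:\calf_0]$ bounded in terms of $G$; taking $N$ to be this bound yields (2), and normality of $\calf$ follows exactly as in the proof of $(2)\Rightarrow(1)$.

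The step I expect to be the main obstacle is this last one: controlling all $M$-fans of $\Gamma$ simultaneously so that the join operation applies, and choosing $M$, hence $N$, uniformly over all finite subgroups $\Gamma$. This is precisely where one must exploit that $A$ is \emph{normal} of index at most $f_G$, so that $C_\Gamma(A)$ has bounded index in $\Gamma$ and all the tori in question sit inside the single reductive group $C_G(A)$; the analogous step in Breuillard's treatment of the $\GL_n$ case \cite{breuillard.jordan.exposition} is the model to follow.
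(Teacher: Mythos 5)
Your direction $(2)\Rightarrow(1)$ is fine and matches the paper (the paper states it in one line; your expanded justification of normality via uniqueness of the maximal fan is correct). The problem is in $(1)\Rightarrow(2)$, and it is a genuine gap rather than a presentational one: statement (2) requires a \emph{single} constant $N=N(G)$ that bounds $[\Gamma:\calf]$ \emph{for every} $M>N$. Your first step shrinks $A$ to an $M$-fan $\calf_0$ by iteratively intersecting kernels of the characters $\chi\in\Psi$ with $2\le|\chi(A^{(i)})|<M$; each such intersection costs an index factor of up to $M^{|\Psi|}$, so the bound you obtain on $[A:\calf_0]$ (and hence on $[\Gamma:\calf_0]$, and hence on $[\Gamma:\calf]$ for the maximal fan $\calf\supseteq\calf_0$) grows with $M$. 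Your parenthetical ``since these depend only on $G$'' is where the argument breaks: $M$ is an arbitrary integer larger than $N$ and does not depend only on $G$, so you have not produced a uniform $N$. Separately, your second step asserts without proof that for $M$ large all $M$-fans of $\Gamma$ lie in maximal tori of $C_G(A)$ that are ``permuted transitively by $C_\Gamma(A)$''; a finite group cannot act transitively on the maximal tori of a positive-dimensional reductive group, and even restricted to the tori actually containing $M$-fans this transitivity is not justified, so the existence and uniqueness of the maximal $M$-fan is also not established.

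The paper avoids both problems by never shrinking $A$ at all. With $N\coloneqq f_G$ and $M>N$, one shows that \emph{every} $M$-fan $F$ of $\Gamma$ is already contained in the weight kernel $H_A$ (the minimal weight kernel containing $A$): since $[F:F\cap A]\le[\Gamma:A]\le N<M$, each $f\in F$ has a power $f^N\in A$, so any ratio $\alpha/\beta$ attaining $M>N$ values on $F$ remains nontrivial on $F\cap A$; hence the weights cutting out $H_F$ and $H_{F\cap A}$ coincide and $F\subset H_F=H_{F\cap A}\subset H_A$. Consequently the subgroup generated by all $M$-fans sits inside $\Gamma\cap H_A$, which is abelian and toral, contains $A$, and is itself an $M$-fan; it is therefore the unique maximal one, and $[\Gamma:\Gamma\cap H_A]\le[\Gamma:A]\le N$ uniformly in $M$. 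The lesson is that the large fan should be found \emph{containing} $A$ (inside the weight kernel $H_A$), not inside $A$; if you want to repair your argument along your own lines you would need to show that your $\calf_0$ sits inside an $M$-fan containing all of $\Gamma\cap H_A$, at which point you have essentially reproduced the paper's proof and the shrinking step becomes unnecessary.
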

\begin{proof}
Let $\Gamma \subset G$ be an arbitrary finite subgroup.

If (2) holds, then $\Gamma$ contains a unique maximal $M$-fan, which is normal and toral. Hence, with $f_G\coloneqq N$ we obtain (1).

Conversely, assume that (1) holds. Let $N\coloneqq f_G$, and let $A\subset \Gamma$ be a normal toral subgroup such that $[\Gamma:A] \leq N$. Then $\Gamma \subset N_G(A)=N_G(H_A)$, where $H_A$ is a weight kernel for the adjoint representation. We let $M>N$. If $F$ is an $M$-fan in $\Gamma$ and $f \in F$, then we have that $f^N \in A \subset H_A$ since $[F:F \cap A] \leq [\Gamma:A] \leq N$. Hence, for any $\alpha_1,\alpha_2 \in \Phi(\fkg)$, if $\alpha_1/\alpha_2$ attains $M$ different values on $F$, then $(\alpha_1/\alpha_2)(f^N)$ attains at least two different values as we go trough $f \in F$. Furthermore, since $M>N$, $\alpha_1/\alpha_2$ reduces to 1 on $F$ if and only if it does so on $F\cap H_A$. This means that the weights determining $F \cap H_A$ as a 2-fan in $\Gamma$ are the same as the weights determining $F$ as an $M$-fan in $\Gamma$. Hence, $H_{F\cap H_A} = H_F$, which means that 
\begin{equation*}
    F \subset H_F=H_{F\cap H_A} \subset H_A.
\end{equation*}
Since $F$ was arbitrary this implies that any $M$-fan in $\Gamma$ is contained in $H_A$. Let $\calf$ be the group generated by all $M$-fans in $\Gamma$. This is an $M$-fan. Since $A\subset \Gamma \cap H_A$ we thus obtain that
\begin{equation*}
    [\Gamma : \calf]\leq [\Gamma:\Gamma \cap H_A] \leq [\Gamma : A] \leq N.
\end{equation*}
This means that (2) follows with $\calf=\Gamma \cap H_A$.
\end{proof}
\subsection{Proof of \Cref{general.jordan}}
We proceed by induction on $\dim G$. If $\dim G=1$, then $\Go$ is a torus which means that we can choose $A\coloneqq \Gamma \cap \Go$. 

We now let $G$ be an arbitrary reductive group with $\dim G >1$ and we assume \Cref{general.jordan} for all reductive groups $H$ such that $\dim H < \dim G$. We keep this assumption for the remainder of the appendix. Let also for the remainder of the text $\Gamma \subset G$ be a fixed arbitrary finite subgroup. Let $\mathcal{H}$ denote the set of all reductive subgroups $H$ of $G$ with same rank as $G$. By \Cref{equivalence.of.fans.and.abelian.groups} the induction assumption also implies that we assume the existence of $N(H)$ for any reductive group $H$ with $\dim H< \dim G$. We define $N$ as 
\begin{equation*}
\begin{aligned}
    N&\coloneqq \max_{H\in \mathcal{H}} \{ N(H) \}. 
\end{aligned}
\end{equation*} 
We see that $N$ is finite since there are a finite number of conjugacy classes of reductive equal rank subgroups. To clarify this, there are a finite number of conjugacy classes of connected equal rank subgroups, since these correspond to closed subsystems of the root system of $G$. Furthermore, for any such connected $H_0$, there are a finite number of groups $H$ such that $H^{\circ}=H_0$, since $H$ must normalize $Z(H_0)$, and $[N_G(Z(H_0)):C_G(Z(H_0))]$ is finite. We also fix an integer $M>N$, which we may assume to be equal to $N+1$.
\begin{lemma}\label{m.fan.lemma1}
    An $M$-fan $F$ in $\Gamma$ not contained in $Z(G)$ is contained in a unique maximal $M$-fan in $\Gamma$.
\end{lemma}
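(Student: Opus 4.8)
The plan is to reduce to the centralizer $L\coloneqq C_G(F)$, which becomes a proper reductive subgroup of full rank precisely because $F\not\subseteq Z(G)$, apply the inductive hypothesis (via \Cref{equivalence.of.fans.and.abelian.groups}) inside $L$, and then show that the join of any two maximal $M$-fans containing $F$ is again an $M$-fan, forcing them to coincide. Conjugating, I would assume $F\subseteq T$. Since $F\not\subseteq Z(G)$, some root of $G$ is nontrivial on $F$, so $L=C_G(F)$ is a \emph{proper} subgroup of $G$; on the other hand $T\subseteq L$ because $F$ is abelian, so $L$ is reductive of the same rank as $G$, i.e.\ $L$ is one of the groups $H\in\mathcal H$ occurring in the definition of $N$, with $\dim L<\dim G$ and hence $N(L)\le N<M$. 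Its root system relative to $T$ is $\Phi(L)=\{\alpha\in\Phi(\fkg):\alpha|_F=1\}\subseteq\Phi(\fkg)$.

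First I would observe that every $M$-fan $F''$ of $\Gamma$ containing $F$ is in fact an $M$-fan of $\Gamma_L\coloneqq\Gamma\cap L$ with respect to $\Phi(L)$: being an $M$-fan, $F''$ is abelian, so $F''\subseteq C_G(F)=L$ and thus $F''\subseteq\Gamma_L$; it is diagonalizable, and since $T$ is a maximal torus of $L$ it is conjugate in $L$ to a subgroup of $T$; and because $\Phi(L)\subseteq\Phi(\fkg)$, the $M$-fan inequalities simply restrict. Applied to $F$ itself this shows $F$ is an $M$-fan of $\Gamma_L$. Now the inductive hypothesis applies to $L$: by \Cref{equivalence.of.fans.and.abelian.groups}, since $M>N\ge N(L)$, the finite group $\Gamma_L$ contains a unique maximal $M$-fan $\mathcal F$ with respect to $\Phi(L)$, and $\mathcal F$ contains every $M$-fan of $\Gamma_L$. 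In particular $F\subseteq\mathcal F$ and every $M$-fan of $\Gamma$ containing $F$ lies inside $\mathcal F$.

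It then remains to deduce uniqueness of the maximal $M$-fan of $\Gamma$ containing $F$. Such a maximal fan exists since $F$ is one and $\Gamma$ is finite; let $F_1,F_2$ be two of them. By the previous step $F_1,F_2\subseteq\mathcal F$, and since $\mathcal F$ is abelian, $F_0\coloneqq\langle F_1,F_2\rangle\subseteq\mathcal F$ is an abelian, hence diagonalizable, subgroup of $\Gamma$ conjugate to a subgroup of $T$ and containing $F$. I claim $F_0$ is an $M$-fan of $\Gamma$ with respect to $\Phi(\fkg)$: for $\alpha,\beta\in\Phi(\fkg)$ set $r=\alpha/\beta$; if $r$ is nontrivial on $F$, then (as $F\subseteq F_1$ and $F_1$ is an $M$-fan) $r$ attains at least $M$ values on $F_1$, hence on $F_0\supseteq F_1$; if $r$ is trivial on $F$, then on each of $F_1,F_2$ it is either trivial or attains at least $M$ values, so it is trivial on $F_0=\langle F_1,F_2\rangle$ when trivial on both, and otherwise it attains at least $M$ values on $F_0$. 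Maximality of $F_1$ and $F_2$ then gives $F_1=F_0=F_2$. The crux — and the only place the hypothesis $F\not\subseteq Z(G)$ enters — is that passing to $L=C_G(F)$ strictly lowers the dimension, which is what lets the inductive hypothesis furnish the single abelian group $\mathcal F$ containing all candidate fans; without knowing a priori that $F_1$ and $F_2$ sit in a common abelian subgroup one could not form the join $\langle F_1,F_2\rangle$, and the final argument would break down.
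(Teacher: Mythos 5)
Your proof is correct and follows essentially the same route as the paper's: pass to a centralizer (the paper uses $C_{\Gamma}(f)$ for a single noncentral $f \in F$, you use $C_{\Gamma}(F)$), which has strictly smaller dimension, and invoke the inductive hypothesis there to trap every $M$-fan containing $F$ inside one abelian group. Your closing join argument is a small refinement the paper omits --- it addresses the point that the maximal fan supplied by induction is a priori a fan only for the smaller group's weights --- but the overall strategy is identical.
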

\begin{proof}
Let $F$ be an $M$-fan in $\Gamma$ not contained in $\rZ(G)$. Pick any $f\in F$ not contained in $\rZ(G)$. 
Since $F$ is abelian, we have that $F \subset C_{\Gamma}(f)$. Thus, $F$ is an $M$-fan in $C_{\Gamma}(f)$. Since $f\notin \rZ(G)$ we have that $\dim C_G(f)< \dim G$. Thus, by the ongoing induction assumption on $\dim G$ we see that $F$ is contained in a unique maximal $M$-fan $\calf$ in $C_{\Gamma}(f)$.
Since $M$-fans are abelian, any $M$-fan containing $F$ also centralizes $f$, which means it is contained in $\mathcal{F}$.
\end{proof}

We say that an $M$-fan $F$ is \textit{associated} to a noncentral element $g \in \Gamma$ if it is the unique maximal $M$-fan $C_{\Gamma}(g)$. We write $F_g$ for the $M$-fan associated to $g$. In the sequel, we let
\begin{equation*}
    Z\coloneqq Z(G) \cap \Gamma.
\end{equation*}
This is an $M$-fan.

In the notation of the proof of \Cref{m.fan.lemma1}, since $F\subset \calf$ and both are abelian, we see that $\mathcal{F} \subset C_{\Gamma}(F)$. The group $C_{\Gamma}(F)$ is reductive and of the same rank as $G$, which means by induction that $\mathcal{F}$ is maximal in $C_{\Gamma}(F)$ and thus
\begin{equation}\label{calf.n.inequality}
    [C_{\Gamma}(F):\mathcal{F}] \leq N(C_{\Gamma}(F))\leq  N
\end{equation}
We can now prove some simple statements.
\begin{lemma}\label{non.maximal.mfan}
Let $F$ be an $M$-fan such that $Z(G) \subsetneq F \subsetneq \mathcal{F}$, where $\calf$ is a maximal $M$-fan in $\Gamma$. The number $n_F$ of elements $g$ of $\Gamma$ associated with $F$ is divisible by $|\mathcal{F}|$ and
    \[\frac{n_F}{|\mathcal{F}|} \leq N.\]
\end{lemma}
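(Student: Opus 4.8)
The plan is to show that the set $\Sigma_F$ of noncentral elements $g\in\Gamma$ with $F_g=F$ is a union of left cosets of $\calf$ inside $C_\Gamma(F)$ (recall $\calf\subset C_\Gamma(F)$). Granting this, the lemma follows at once: by \Cref{m.fan.lemma1} and the maximality of $\calf$, the group $\calf$ is the unique maximal $M$-fan of $C_\Gamma(F)$, so \eqref{calf.n.inequality} applies and gives $[C_\Gamma(F):\calf]\le N$; hence $|\calf|$ divides $n_F=|\Sigma_F|$ and $n_F/|\calf|\le N$. Here the hypothesis $\rZ(G)\subsetneq F$ is exactly what forces $\dim C_G(F)<\dim G$, so that the induction hypothesis (hence \eqref{calf.n.inequality}) is available for $C_\Gamma(F)$.

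Two preliminary observations come first. If $g\in\Sigma_F$ then $F\subset C_\Gamma(g)$, so $g\in C_\Gamma(F)$, whence $\Sigma_F\subset C_\Gamma(F)$. And for \emph{any} noncentral $g\in C_\Gamma(F)$ one has $F\subset F_g$ (as $F$ is an $M$-fan contained in $C_\Gamma(g)$), while $F_g$, being abelian and containing $F$, lies in $C_\Gamma(F)$ and therefore in its maximal $M$-fan; thus $F\subset F_g\subset\calf$. The crux is then the claim that $gh\in\Sigma_F$ whenever $g\in\Sigma_F$ and $h\in\calf$. Clearly $gh\in C_\Gamma(F)$. Next, $gh$ is noncentral: were it central, then since $g=(gh)h^{-1}$ an element of $\Gamma$ would commute with $g$ if and only if it commutes with $h$, so $C_\Gamma(g)=C_\Gamma(h)\supset\calf$ (the last inclusion because $\calf$ is abelian and $h\in\calf$), forcing $F_g\supset\calf$ and contradicting $F_g=F\subsetneq\calf$. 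Finally, $F_{gh}=F$: by the second observation $F\subset F_{gh}\subset\calf$, and since $h$ (lying in the abelian group $\calf\supset F_{gh}$) and $gh$ (by definition of $F_{gh}$) both commute with $F_{gh}$, so does $g=(gh)h^{-1}$; hence $F_{gh}\subset C_\Gamma(g)$, and being an $M$-fan it is contained in $F_g=F$, so $F_{gh}=F$. Therefore each left coset $g\calf$ with $g\in\Sigma_F$ is contained in $\Sigma_F$, and $\Sigma_F$ is a union of left cosets of $\calf$ in $C_\Gamma(F)$, as wanted.

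I expect the main obstacle to be precisely these last two points — that $gh$ remains noncentral and that $F_{gh}$ does not grow beyond $F$. Both are established by transporting centralizer information between $g$ and $gh$ across the abelian group $\calf$, and this is where the strict inclusions $\rZ(G)\subsetneq F\subsetneq\calf$ are essential: the first to make the induction hypothesis and \eqref{calf.n.inequality} applicable to $C_\Gamma(F)$, the second to rule out $F_g$ (or $F_{gh}$) jumping up to all of $\calf$.
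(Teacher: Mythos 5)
Your proposal is correct and follows essentially the same route as the paper: both show that the set of elements associated to $F$ is a union of cosets of $\calf$ inside $C_\Gamma(F)$, with the same two key verifications (that $gh$ stays noncentral and that $F_{gh}=F$, each obtained by transporting centralizer information across the abelian group $\calf$), and then conclude from the bound $[C_\Gamma(F):\calf]\leq N$.
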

\begin{proof}
We can assume $n_F \geq 1$, since the statement is trivial otherwise. All elements associated with $F$ lie in $C_{\Gamma}(F)$. Furthermore, if $g$ is associated to $F$ and if $f \in \mathcal{F}$, then the element $gf$ is still associated with $F$. To see this, note that we cannot have $gf \in Z(G)$, since this would mean $C_{\Gamma}(f)=C_{\Gamma}(g)$. Since $\calf \subset C_{\Gamma}(f)$ and since $F$ is the maximal $M$-fan of $C_{\Gamma}(g)$, we see that this cannot happen. Consider $F_{gf}$, the maximal $M$-fan of $C_G(gf)$. We have that $F \subset C_{\Gamma}(gf)$, which means $F \subset F_{gf}$. Since $\mathcal{F}$ is the unique maximal $M$-fan containing $F$, we must have that $F_{gf} \subset \mathcal{F}$. This means that $F_{gf}$ must commute with $f$ and therefore also $g$. We thus see that $F_{gf} \subset C_{\Gamma}(g)$, whence $F_{gf}=F$. That is, $gf$ is associated with $F$.

The set of elements associated to $F$ are thus unions of cosets of $\mathcal{F}$, and they all lie in $C_\Gamma(F)$. By \Cref{calf.n.inequality}, the index of $\mathcal{F}$ in $C_\Gamma(F)$ is at most $N$, from which the statement of the lemma follows.
\end{proof}
We get also get a counting formula for maximal $M$-fans in $\Gamma$.
\begin{lemma}\label{maximal.fan}
 Let $\mathcal{F}$ be a maximal $M$-fan in $\Gamma$. The number of elements of $\Gamma\setminus Z$ associated to $\mathcal{F}$ is 
    \[n_{\mathcal{F}}=|C_{\Gamma}(\mathcal{F})|-|Z|.\]
    and $[C_{\Gamma}(\mathcal{F}):\mathcal{F}] \leq N$.
\end{lemma}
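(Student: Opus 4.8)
The plan is to establish that an element $g\in\Gamma\setminus Z$ is associated to $\mathcal{F}$ precisely when $g$ centralizes $\mathcal{F}$, i.e.\ that
\[
\{\,g\in\Gamma\setminus Z\ :\ F_g=\mathcal{F}\,\}=C_\Gamma(\mathcal{F})\setminus Z,
\]
and then to read off both statements of the lemma.

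For the inclusion ``$\subseteq$'', suppose $g\in\Gamma\setminus Z$ is associated to $\mathcal{F}$. Since $F_g$ is by definition an $M$-fan contained in $C_\Gamma(g)$, we have $\mathcal{F}=F_g\subseteq C_\Gamma(g)$; as $\mathcal{F}$ is abelian this means exactly that $g$ commutes with every element of $\mathcal{F}$, i.e.\ $g\in C_\Gamma(\mathcal{F})$. For ``$\supseteq$'', take $g\in C_\Gamma(\mathcal{F})$ with $g\notin Z$, so that $g$ is noncentral and $F_g$ is defined. Centralizing $\mathcal{F}$ gives $\mathcal{F}\subseteq C_\Gamma(g)$, so $\mathcal{F}$ is an $M$-fan contained in $C_\Gamma(g)$, whence $\mathcal{F}\subseteq F_g$ by maximality of $F_g$ in $C_\Gamma(g)$. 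Conversely, $F_g$ is an $M$-fan contained in $C_\Gamma(g)\subseteq\Gamma$, hence an $M$-fan of $\Gamma$ containing the maximal $M$-fan $\mathcal{F}$; therefore $F_g=\mathcal{F}$ and $g$ is associated to $\mathcal{F}$. (Equivalently, this last step follows from \Cref{m.fan.lemma1}: $\mathcal{F}\not\subseteq Z(G)$ is contained in a unique maximal $M$-fan of $\Gamma$, which is $\mathcal{F}$ itself.)

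Granting the displayed equality, and noting that every element of $Z$ is central in $G$ and hence lies in $C_\Gamma(\mathcal{F})$, the elements of $\Gamma\setminus Z$ associated to $\mathcal{F}$ are exactly the $|C_\Gamma(\mathcal{F})|-|Z|$ elements of $C_\Gamma(\mathcal{F})\setminus Z$, which gives $n_{\mathcal{F}}=|C_\Gamma(\mathcal{F})|-|Z|$. The bound $[C_\Gamma(\mathcal{F}):\mathcal{F}]\le N$ is then nothing but \eqref{calf.n.inequality} applied with $F=\mathcal{F}$: the unique maximal $M$-fan of $\Gamma$ containing $\mathcal{F}$ is $\mathcal{F}$ itself, $C_G(\mathcal{F})$ is reductive of the same rank as $G$ because $\mathcal{F}$ sits inside a maximal torus, and $N(C_G(\mathcal{F}))\le N$ by the choice of $N$.

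The point that requires care, and which I would flag as the crux, is the step ``$\mathcal{F}\subseteq F_g\Rightarrow F_g=\mathcal{F}$'': it rests on the fact that a maximal $M$-fan of $\Gamma$ contained in $C_\Gamma(g)$ remains maximal among the $M$-fans contained in $C_\Gamma(g)$, which is where one uses that $M$ is large together with the uniqueness clause of \Cref{m.fan.lemma1}, exactly as in the proofs of \Cref{m.fan.lemma1} and \Cref{non.maximal.mfan}. Everything else is routine bookkeeping with centralizers.
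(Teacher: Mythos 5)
Your proof is correct and follows essentially the same route as the paper's: both rest on the equivalence ``$g$ is associated to $\mathcal{F}$ if and only if $g\in C_\Gamma(\mathcal{F})$'' together with $Z\subset C_\Gamma(\mathcal{F})$, and both obtain the index bound by the argument of \eqref{calf.n.inequality}. You merely spell out in more detail the direction $g\in C_\Gamma(\mathcal{F})\setminus Z\Rightarrow F_g=\mathcal{F}$ (via maximality of $\mathcal{F}$ in $\Gamma$), which the paper states without elaboration.
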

\begin{proof}
The inequality on the index follows by the same argument as for \Cref{calf.n.inequality}. Too see that $n_{\mathcal{F}}=|C_{\Gamma}(\mathcal{F})|-|Z|$, note that 
an element $g \in \Gamma$ is associated to $\mathcal{F}$ if and only if $\mathcal{F} \subset C_{\Gamma}(g)$, which is equivalent to  $g \in C_{\Gamma}(\mathcal{F})$. A maximal $M$-fan will contain $Z$, since if $F$ is an $M$-fan in $\Gamma$, $ZF$ is an $M$-fan as well. Since $Z\subset C_{\Gamma}(\calf)$, the statement of the lemma follows. 
\end{proof}

We now begin to produce a formula for the number of elements in $\Gamma$, which will allow us to finalize the proof of \Cref{general.jordan}. To do this, we split the elements in $\Gamma$ into four sets,
\[\Gamma=Z \cup \Gamma_1 \cup \Gamma_2 \cup \Gamma_3.\]
Here $\Gamma_1$ consists of the noncentral elements associated to $Z$, $\Gamma_2$ consists of the elements associated to an $M$-fan $F$ such that $Z\subsetneq F \subsetneq \mathcal{F}$, where $\mathcal{F}$ is a maximal $M$-fan containing $F$, and $\Gamma_3$ consists of the elements associated to a maximal $M$-fan $\mathcal{F}$.

We begin by enumerating $\Gamma_1$. It is a union of cosets of $Z$. Indeed, if $g \in \Gamma$ is associated to $Z$, then so is $zg$ for $z \in Z$, since $C_{\Gamma}(g)=C_{\Gamma}(zg)$. Furthermore, $\Gamma$ permutes these cosets by conjugation, since $\gamma gZ \gamma^{-1}=\gamma g \gamma^{-1}Z$, for all $g, \gamma \in \Gamma$.

Let $S=G/[G,G]$ and let $\rho:G \to S$ be the quotient map. Let 
\begin{equation*}
    n\coloneqq |[G:G] \cap Z(G)|,
\end{equation*}
a finite number. We claim that for any $g \in \Gamma$, 
\begin{equation}\label{n.c.index}
    [N_{\Gamma}(gZ) : C_{\Gamma}(g)] \leq n.
\end{equation}
To see this, note that for any $h \in N_{\Gamma}(gZ)$ there is a unique $z\in Z$ such that $hgh^{-1}=gz$. This implies that $\rho(z)=1$, so $z \in [G:G] \cap Z$. This explains the inequality in (\ref{n.c.index}). Counting all elements in the $\Gamma$-conjugacy class of the coset $gZ$, we see that
\[|\text{$\Gamma$-orbit of $gZ$}||Z|=\frac{|\Gamma||Z|}{N_{\Gamma}(gZ)}=|\Gamma|\frac{|C_{\Gamma}(g)|}{|N_{\Gamma}(gZ)|}\frac{|Z|}{|C_{\Gamma}(g)|}\]
\[=|\Gamma|\frac{1}{[N_{\Gamma}(gZ):C_{\Gamma}(g)][C_{\Gamma}(g):Z]}=|\Gamma|\frac{1}{\lambda},\]
where $\lambda \coloneqq \frac{1}{[N_{\Gamma}(gZ):C_{\Gamma}(g)][C_{\Gamma}(g):Z]}$. 
By combining all such conjugacy classes we see that
\[|\Gamma_1|=|\Gamma|\left(\frac{1}{\lambda_1}+\frac{1}{\lambda_2}+...+\frac{1}{\lambda_{k_1}}\right)\]
where, for each $i=1,...,k_1$, we have that $\lambda_i \leq nN$. 

We continue by counting the elements of $\Gamma_2$. If $F$ is an $M$-fan in $\Gamma$ strictly contained in a maximal $M$-fan $\mathcal{F}$, we know that if $n_F$ is the number of elements associated to $F$, then by \Cref{non.maximal.mfan} $\frac{n_F}{|\mathcal{F}|}$ is an integer less than or equal to $N$. The number of $M$-fans conjugate to $F$ is $\frac{|\Gamma|}{|N_{\Gamma}(F)|}$. We have that
\[[N_{\Gamma}(F):C_{\Gamma}(F)] \leq (\dim G)!\]
 since $N_{\Gamma}(F)$ permute the eigenspaces of $F$ and $C_{\Gamma}(F)$ are the elements in $\Gamma$ preserving the eigenspace decomposition of $F$, and $\dim \mathfrak{g}=\dim G$. Combining what we have above we get that the number of $g \in \Gamma$ associated to an $M$-fan in the conjugacy class of $F$ is
\[ |\text{$\Gamma$-orbit of $F$}|n_{F}=\frac{|\Gamma|n_{F}}{|N_{\Gamma}(F)|}=|\Gamma|\frac{|C_{\Gamma}(F)|}{|N_{\Gamma}(F)|}\frac{n_F}{|\mathcal{F}|[C_{\Gamma}(F):\mathcal{F}]}=|\Gamma|\frac{\nu}{\mu}\]
where $\nu=\frac{n_F}{|\mathcal{F}|}$ and $\mu=[N_{\Gamma}(F):C_{\Gamma}(F)][C_{\Gamma}(F):\mathcal{F}]$. Summming over all conjugacy classes of $M$-fans we see that there are integers $\nu_i, \mu_i$ with $\nu_i \leq N$ and $\mu_i \leq N(\dim G)!$ such that
\[|\Gamma_2|=|\Gamma|\left( \frac{\nu_1}{\mu_1}+\frac{\nu_2}{\mu_2}+...+\frac{\nu_{k_2}}{\mu_{k_2}} \right).\]

Lastly, we count the elements in $\Gamma_3$. Let $\mathcal{F}$ be a maximal $M$-fan. By \Cref{maximal.fan} we know the number of elements $n_{\mathcal{F}}$ associated to $\mathcal{F}$ is $|C_{\Gamma}(\mathcal{F})|-|Z|$. If we let $\omega=[N_{\Gamma}(\mathcal{F}):C_{\Gamma}(\mathcal{F})]$ and $q=[N_{\Gamma}(\mathcal{F}):Z]$, we see that the number of elements associated to the conjugacy class of $\mathcal{F}$ is
\[|\text{$\Gamma$-orbit of $\mathcal{F}$}|(|C_{\Gamma}(\mathcal{F})|-|Z|)=|\Gamma|\left( \frac{|C_{\Gamma}(\mathcal{F})|}{|N_{\Gamma}(\mathcal{F})|}-\frac{|Z|}{|N_{\Gamma}(\mathcal{F})|}\right)=|\Gamma|\left( \frac{1}{w}-\frac{1}{q}\right).\]
Summing over all conjugacy classes of all maximal $M$-fans we see that there are integers $\omega_i, q_i$, with $q_i$ dividing $\frac{|\Gamma|}{|Z|}$, such that $\omega_i \leq (\dim G)!$, $q_i = \omega_i[C_{\Gamma}(\calf):Z] \geq 2\omega_i$ and
\[|\Gamma_3|=|\Gamma|\left(\frac{1}{\omega_1}-\frac{1}{q_1}+\frac{1}{\omega_2}-\frac{1}{q_2}+...+\frac{1}{\omega_{k_3}}-\frac{1}{q_{k_3}} \right).\]
We now combine our three enumerations to get the following formula for the order of $\Gamma$
\begin{equation}\label{fundamental.equation}
|\Gamma|=|Z|+|\Gamma|\left(\sum_{i=1}^{k_1}\frac{1}{\lambda_i}+\sum_{i=1}^{k_2}\frac{\nu_i}{\mu_i}+\sum_{i=1}^{k_3}\left(\frac{1}{\omega_i}-\frac{1}{q_i}\right) \right).
\end{equation}
Here, $\lambda_i,\nu_i,\mu_i,\omega_i$ are all positive integers bounded above by $C=2nN(\dim G)!$ and $q_i$ divides $\frac{|\Gamma|}{|Z|}$. 
Furthermore, since $1/\omega_i-1/q_i \geq 1/2\omega_i$ we see that $k_1+k_2+k_3 \leq C$. In particular, the $k_i$ are also bounded above by $C$. We gather terms and obtain the following statement.
\begin{lemma}\label{intermediate.equation}
    If $\gamma\coloneqq |\Gamma|/|Z|$, then there are positive integers $q_1, ..., q_k$ for some $k_3\geq 1$ such that
    \[\frac{1}{\gamma}=\left(\sum_{i=1}^{k_3}\frac{1}{q_i}\right) -\frac{b}{a},\]
    where $b/a$ is a reduced fraction and $b,a$ are integers both bounded by a constant depending only on $G$.
\end{lemma}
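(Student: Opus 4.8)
The plan is essentially pure bookkeeping on top of the counting identity \eqref{fundamental.equation}: divide it by $|\Gamma|$ and collect the bounded terms. Concretely, dividing \eqref{fundamental.equation} through by $|\Gamma|$ and using $|Z|/|\Gamma| = 1/\gamma$ gives
\[
    1 = \frac{1}{\gamma} + \sum_{i=1}^{k_1}\frac{1}{\lambda_i} + \sum_{i=1}^{k_2}\frac{\nu_i}{\mu_i} + \sum_{i=1}^{k_3}\left(\frac{1}{\omega_i} - \frac{1}{q_i}\right),
\]
and therefore
\[
    \frac{1}{\gamma} = \sum_{i=1}^{k_3}\frac{1}{q_i} \;-\; \left( -1 + \sum_{i=1}^{k_1}\frac{1}{\lambda_i} + \sum_{i=1}^{k_2}\frac{\nu_i}{\mu_i} + \sum_{i=1}^{k_3}\frac{1}{\omega_i} \right).
\]
I would then define $b/a$ to be the parenthesized rational number written in lowest terms; taking the $q_i$ of \eqref{fundamental.equation} to be the $q_i$ in the statement, this is exactly the asserted identity.

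It then remains to verify the two side conditions. First, $k_3 \geq 1$: since $Z = Z(G)\cap\Gamma$ is itself an $M$-fan and $\Gamma$ is finite, $\Gamma$ contains at least one maximal $M$-fan (e.g.\ any one containing $Z$), so there is at least one conjugacy class of maximal $M$-fans and the index $k_3$ occurring in \eqref{fundamental.equation} is positive. Second, the boundedness of $a$ and $b$: by the remarks following \eqref{fundamental.equation}, all of $k_1,k_2,k_3$ and all of $\lambda_i,\nu_i,\mu_i,\omega_i$ are at most $C = 2nN(\dim G)!$, and $C$ depends only on $G$ since $n$ and $N$ do. Hence the parenthesized number is a sum of at most $C+1$ fractions, each with numerator and denominator bounded by $C$; clearing denominators yields a representative with denominator at most $C^{C}$ and numerator bounded by a function of $C$ alone, and passing to the reduced fraction $b/a$ only shrinks $|a|$ and $|b|$. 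Thus $a$ and $b$ are bounded in terms of $G$, as required.

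There is no genuine obstacle in this step; the content is all in \eqref{fundamental.equation}, which has already been established. The only points that need a word of care are spelling out that the bound on $a$ and $b$ depends only on $G$ (via $C$), and recording that $k_3\geq 1$ so that the sum $\sum_{i=1}^{k_3}1/q_i$ is actually present in the formula.
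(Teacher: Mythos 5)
Your proof is correct and is exactly the paper's argument: the paper's own proof of this lemma is the single sentence ``this is the conclusion of the counting arguments preceding the lemma,'' i.e.\ precisely the bookkeeping you carry out (divide \eqref{fundamental.equation} by $|\Gamma|$, absorb the bounded terms into $b/a$, note $k_3\geq 1$ since $Z$ is an $M$-fan and $\Gamma$ is finite). You merely make explicit the bound on $a,b$ in terms of $C$, which the paper leaves implicit.
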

\begin{proof}
This is the conclusion of the counting arguments preceding the lemma.
\end{proof}

\begin{lemma}\label{little.equation}
Suppose that we have positive integers $a,b,\gamma, q_i$ with $q_i\leq \gamma$ an equation of the form
    \[\frac{1}{\gamma}=\left(\sum_{i=1}^{k}\frac{1}{q_i}\right)-\frac{b}{a}.\]
    Then $\gamma \leq f(k,a)$, where $f:\mathbb{N}^2 \to \mathbb{N}$ is some function. 
\end{lemma}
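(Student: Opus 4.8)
The plan is to prove the statement by strong induction on the number $k$ of unit fractions on the right-hand side, exploiting the elementary principle behind the finiteness of Egyptian-fraction expansions: in an identity $\sum_{i=1}^{k}1/q_i=r$ with $r>0$, the smallest denominator satisfies $q_{\min}\le k/r$. First I would order the denominators so that $q_1\le q_2\le\cdots\le q_k$. Since $1/\gamma>0$, the hypothesis gives $\sum_{i=1}^{k}1/q_i>b/a\ge 1/a$ (using $b\ge 1$), hence $k/q_1\ge\sum_{i=1}^{k}1/q_i>1/a$, so $q_1<ka$; thus $q_1$ is confined to a finite set of values bounded in terms of $k$ and $a$. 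In the base case $k=1$ the equation reads $1/\gamma=1/q_1-b/a$, so $\gamma=aq_1/(a-bq_1)$ with $a-bq_1$ a positive integer, giving $\gamma\le aq_1<a^2$.

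For the inductive step I would subtract the term $1/q_1$ and rewrite the equation as
\[
    \frac{1}{\gamma}=\sum_{i=2}^{k}\frac{1}{q_i}-\frac{bq_1-a}{aq_1},
\]
which again has the prescribed shape, now with $k-1$ unit fractions and with a fraction whose denominator $aq_1$ is bounded in terms of $k$ and $a$. If $bq_1-a>0$, this is an instance of the lemma with parameters $(k-1,aq_1)$, and the inductive hypothesis bounds $\gamma$ by $f(k-1,aq_1)$; since $aq_1$ ranges over a finite set determined by $k$ and $a$, we may define $f(k,a)$ to dominate all the resulting bounds. If $bq_1-a<0$, then the right-hand side strictly exceeds $\sum_{i=2}^{k}1/q_i$, and since each $q_i\le\gamma$ this sum is at least $(k-1)/\gamma$; combined with the equation this forces $k-1<1$, so $k=1$, a case already treated. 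Thus the only configuration not yet controlled is the borderline one $bq_1=a$.

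The main obstacle is precisely this degenerate case: when $bq_1=a$ (so that, $b/a$ being in lowest terms, $b=1$ and $q_1=a$), the equation collapses to $1/\gamma=\sum_{i=2}^{k}1/q_i$. For $k\ge 3$ the inequality $q_i\le\gamma$ again yields $1/\gamma\ge(k-1)/\gamma$, which is impossible, so this cannot occur; but for $k=2$ it degenerates to $\gamma=q_2$, and the displayed relation by itself gives no bound on $\gamma$. To close this gap one must use more than the bare inequality $q_i\le\gamma$ — namely the extra arithmetic inherited from \Cref{intermediate.equation}, where the denominators $q_i$ in fact divide $\gamma$ and arise from the counting behind \Cref{fundamental.equation}. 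In that situation a denominator equal to $\gamma$ forces the corresponding maximal $M$-fan to be normal in $\Gamma$ with index bounded by the constants already fixed, so the configuration is harmless for the proof of \Cref{general.jordan} even if it is not bounded by \Cref{little.equation} alone. Setting up this last case carefully, and propagating the auxiliary constants correctly through the induction, is where I expect the real work to lie.
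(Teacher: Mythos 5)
Your induction is precisely the argument that the paper outsources to \cite[Lemma 4.1]{breuillard.jordan.exposition} (and that appears in a commented-out draft of the proof): order the $q_i$, note that the largest unit fraction forces $q_1<ka$, absorb $1/q_1$ into the fraction to get an instance with parameters $(k-1,aq_1)$, and take a maximum over the finitely many possible values of $aq_1$. The base case and the cases $bq_1-a>0$ and $bq_1-a<0$ are all handled correctly.

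The borderline case you isolate is not a failure of your argument --- it is a genuine counterexample to the lemma as stated. Take $k=2$, $b=1$, $q_1=a$ and $q_2=\gamma$ arbitrary with $\gamma\ge a$: then $1/q_1+1/q_2-b/a=1/\gamma$ and all hypotheses hold, yet $\gamma$ is unbounded. The statement should require the strict inequality $q_i<\gamma$. With that hypothesis your remaining case $bq_1=a$ becomes vacuous: the equation collapses to $1/\gamma=\sum_{i\ge 2}1/q_i$, which for $k=2$ forces $q_2=\gamma$ and for $k\ge 3$ gives $1/\gamma\ge (k-1)/\gamma$, both impossible. (Equivalently, as in the paper's draft, strictness gives $\sum_{i\ge2}1/q_i<b/a$ directly, so the subtracted fraction is always positive and no case split is needed.) The strengthened hypothesis is exactly what is available at the point of application: in the proof of \Cref{general.jordan} the cases $k_3=0$ and $q_i=\gamma$ are disposed of separately before \Cref{little.equation} is invoked. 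So once the hypothesis is corrected, your proof closes without any appeal to the extra arithmetic of \Cref{intermediate.equation} (divisibility of the $q_i$ or normality of the fans).
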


\begin{proof}   
This is \cite[Lemma 4.1]{breuillard.jordan.exposition}.
\end{proof}

\begin{proof}[Proof of \Cref{general.jordan}]
 Let 
\begin{equation*}
    f_G \coloneqq \max\{ a,N(\dim G)!,f(k_3,a) \},
\end{equation*}
where $f\colon \mathbb{N}^2\to \mathbb{N}$ is the function obtained from \Cref{intermediate.equation} and \Cref{little.equation}. 

    Consider $\Cref{fundamental.equation}$. 
    
    If $k_3=0$, then in $\Cref{intermediate.equation}$ we obtain that $1/\gamma=-b/a$, which implies that $b=-1$ and thus $\gamma=\frac{|\Gamma|}{|Z|}=a$ which is bounded only in terms of $G$. This means that $Z$ is of bounded index in $\Gamma$, and we can thus choose $A=Z$. 
    
    If instead $k_3 > 0$ and $q_i=\gamma$ for some $i$, then for some maximal $M$-beam $\mathcal{F}_i$ we have that $[N_{\Gamma}(\mathcal{F}_i):Z]=q_i=\gamma=[\Gamma:Z]$, which means that $\Gamma=N_{\Gamma}(\mathcal{F}_i)$. We thus choose $A=\mathcal{F}_i$, which is uniformly bounded since 
    \[[\Gamma:\mathcal{F}_i] = [N_{\Gamma}(\mathcal{F}_i):C_{\Gamma}(\mathcal{F}_i)][C_{\Gamma}(\mathcal{F}_i):\mathcal{F}_i] \leq N(\dim G)!.\]
    
    Now assume that $q_i \neq \gamma$ for all $i$ and consider the non-negative integer \begin{equation*}
        \gamma \left(\sum_{i=1}^{k_3}\frac{1}{q_i}\right)-1.
    \end{equation*} 
    If this is zero, then $\sum_{i=1}^{k_3}\frac{1}{q_i}=\frac{1}{\gamma}$ which, since $q_i\leq \gamma$ implies that $k_3=1$ and $q_1=\gamma$, a contradiction. If it instead is positive, then $b>0$ and we can apply \Cref{little.equation}, in which case $\gamma$ is again bounded only in terms of $G$, this time by $f(k_3,a)$, and we again choose $A=Z$.
\end{proof}
%
%
%
%
%
%
%
%
%
%
%
\printbibliography
\end{document}